\documentclass{scrartcl}
\usepackage[utf8]{inputenc}
\usepackage[T1]{fontenc}
\usepackage[UKenglish]{babel}
\usepackage{amsmath}
\usepackage{amsthm}
\usepackage{amssymb}
\usepackage{mathtools}
\usepackage{enumerate}

\usepackage{hyperref}

\DeclarePairedDelimiter\ceil{\lceil}{\rceil}

\DeclareMathOperator{\Lip}{Lip}

\DeclareMathOperator{\diam}{diam}

\newtheorem{proposition}{Proposition}[section]
\newtheorem{corollary}[proposition]{Corollary}
\newtheorem{theorem}[proposition]{Theorem}
\newtheorem{lemma}[proposition]{Lemma}
\theoremstyle{remark}
\newtheorem{remark}[proposition]{Remark}
\newtheorem{example}[proposition]{Example}

\makeatletter
\def\namedlabel#1#2{\begingroup
   \def\@currentlabel{#2}%
   \label{#1}\endgroup
}
\makeatother

\numberwithin{equation}{section}

\title{Generic properties of nonexpansive mappings on unbounded domains}
\author{Christian Bargetz\footnote{Universität Innsbruck, Department of Mathematics, Technikerstraße 13, 6020 Innsbruck, Austria, \texttt{christian.bargetz@uibk.ac.at}} \and Simeon Reich\footnote{Department of Mathematics, The Technion---Israel Institute of Technology,
32000 Haifa, Israel, \texttt{sreich@technion.ac.il}} \and Daylen Thimm\footnote{Universität Innsbruck, Department of Mathematics, Technikerstraße 13, 6020 Innsbruck, Austria, \texttt{daylen.thimm@uibk.ac.at}}}

\begin{document}
\maketitle

\begin{abstract}
  \noindent\textbf{Abstract.} We investigate typical properties of nonexpansive mappings on unbounded complete hyperbolic metric spaces. For two families of metrics of uniform convergence on bounded sets, we show that the typical nonexpansive mapping is a Rakotch contraction on every bounded subset and that there is a bounded set which is mapped into itself by this mapping. In particular, we obtain that the typical nonexpansive mapping in this setting has a unique fixed point which can be reached by iterating the mapping. Nevertheless, it turns out that the typical mapping is not a Rakotch contraction on the whole space and that it has the maximal possible Lipschitz constant of one on a residual subset of its domain. By typical we mean that the complement of the set of mappings with this property is $\sigma$-$\phi$-porous, that is, small in a metric sense. For a metric of pointwise convergence, we show that the set of Rakotch contractions is meagre.

  \vskip2mm\noindent\textbf{Keywords.} Hyperbolic metric spaces, nonexpansive mappings, $\sigma$-$\phi$-porosity, unbounded domains.

  \vskip2mm\noindent\textbf{Mathematics Subject Classification (2020).} 47H09, 54E52     
\end{abstract}

\section{Introduction}

The question of existence of fixed points of nonexpansive mappings on bounded, closed and convex subsets of certain metric spaces has been studied from a number of different angles. Recall that a mapping $f\colon X\to X$ on a metric space $(X,\rho)$ is called \emph{nonexpansive} if it satisfies
\[
  \rho(f(x),f(y)) \leq \rho(x,y)
\]
for all $x,y\in X$. The starting point for these investigations is the classical fixed point theorem of Brouwer which states that every continuous self-mapping of a bounded, closed and convex subset of a Euclidean space has a fixed point. Without additional assumptions on the mapping, this theorem does not generalise to infinite dimensional Banach spaces. In this setting there are even Lipschitz mappings without a fixed point because in infinite dimensional Banach spaces the sphere is a Lipschitz retract of the ball; see~\cite{BS}. On the positive side, if we only consider nonexpansive mappings, then the fixed point theorem of Browder-Goehde-Kirk, see~\cite{Browder, GK1990, GR1984}, shows that under certain geometric assumptions on the Banach space every nonexpansive self-mapping of a bounded, closed and convex subset has a fixed point. On the other hand, there are concrete examples of nonexpansive mappings on bounded, closed and convex subsets of Banach spaces including, for instance, $\mathcal{C}([0,1])$, which lack a fixed point. In~\cite{BM1976} and~\cite{BM1989}, F. S.~de~Blasi and J.~Myjak showed that on every bounded, closed and convex subset of a Banach space the \emph{typical} nonexpansive mapping has a fixed point, which can be reached by iterating the mapping. Here the term typical means that the set of mappings with this property is large in a topological sense. More precisely, in the first of these articles it is shown that the set of mappings without a fixed point is a meagre subset of the space of all nonexpansive mappings and in the second one it is shown that this set is even $\sigma$-porous. Since Banach's fixed point theorem states that for a complete metric space $(X,\rho)$,
every \emph{strict contraction}, that is, every $f\colon X\to X$ satisfying
\[
  \rho(f(x),f(y)) \leq L \rho(x,y)
\]
for some $L<1$ and all $x,y\in X$, has a fixed point, it seems natural to ask whether this can serve as an explanation for the above phenomenon. Already in~\cite{BM1976},
de Blasi and Myjak showed that this is not the case by proving that in the Hilbert space setting the typical nonexpansive mapping is not a strict contraction. This result was later extended to Banach spaces in~\cite{BD2016} and to a large class of metric spaces in~\cite{BDR2017}. In~\cite{RZ2000, RZ2001} the second author together with A. J.~Zaslavski showed that the typical nonexpansive self-mapping of a bounded, closed and convex subset~$C$ of a Banach space is a \emph{Rakotch contraction}: in other words, for the typical nonexpansive $f\colon C\to C$, there is a decreasing function $\varphi_f\colon [0,\infty)\to [0,1]$, that is, $\varphi_f(s) \geq \varphi_f(t)$ for $s\leq t$, with $\varphi_f(t)<1$ for $t>0$ such that
\[
  \rho(f(x),f(y)) \leq \varphi_f(\rho(x,y)) \rho(x,y)
\]
for all $x,y\in C$. These mappings satisfy the assumptions of Rakotch's fixed point theorem given in~\cite{Rakotch} and hence have a fixed point which can be reached by sequences of iterates. An extension of this result to bounded, closed and convex subsets of hyperbolic spaces can be found in~\cite{MRZ2004, ReichESI, RZ2001} and to self-mappings of bounded, closed and convex subsets of positively curved spaces in~\cite{BDMR2021}.

In~\cite{Strobin}, F.~Strobin showed that on {\em unbounded} closed and convex subsets of Hilbert spaces the situation is dramatically different. Recall that the \emph{modulus of continuity} of a mapping $f$ is defined by
\[
  \omega_f(t) := \sup\{\rho(f(x),f(y))\colon x,y\in X, \; \rho(x,y) \leq t\}
\]
for $t\geq 0$. Given an increasing function $\omega\colon [0, \infty) \to [0,\infty)$, we denote by $C_\omega(X)$ the space of all continuous self-mappings of $X$ satisfying $\omega_f(t)\leq \omega(t)$ for all $t\geq 0$ equipped with the topology of uniform convergence on bounded subsets of~$X$. In~\cite{Strobin} Strobin showed that if $X$ is an unbounded closed and convex subset of a Hilbert space, then the subset of all mappings satisfying $\omega_f(t)<\omega(t)$ for some $t > 0$ is meagre. Setting $\omega(t)=t$, we see that this result implies, in particular, that the set of Rakotch contractions is a meagre subset of the space of nonexpansive mappings. Note that this fact is not just the negation of the statements of the results in the case of bounded domains, but also a negation in a stronger sense: it means that the set of Rakotch contractions is not only not the complement of a $\sigma$-porous set, but also a $\sigma$-porous set itself. In~\cite{RZ2015}, the second author together with A. J. ~Zaslavski showed that the typical self-mapping of an unbounded closed and convex subset of a hyperbolic space in the sense of~\cite{RS1990} is Rakotch contractive on bounded subsets. More precisely, they showed that the set without this property is a meagre set. Using a different metric, which generates a topology which is finer than the one of uniform convergence on bounded sets, in~\cite{RZ2016} they show that this set is even $\sigma$-porous with respect to this new metric. Since Strobin's results are obtained by using a variant of the Kirszbraun-Valentine extension theorem for uniformly continuous mappings, they seem to be tied to the Hilbert space setting. The aim of the current article is to extend Strobin's result on Rakotch contractions defined on an unbounded closed and convex subset of a Hilbert space to mappings on an unbounded complete  hyperbolic space, and to show that for two natural families of metrics, which generate the topology of uniform convergence on bounded sets, both the set of Rakotch contractions and the set of mappings which are not Rakotch contractive on bounded sets are $\sigma$-$\phi$-porous for suitable $\phi$. As a limiting case of one of these families, we retrieve the metric considered in~\cite{RZ2016}. In some sense the current article can be seen as an attempt to extend the results of~\cite{RZ2016} and~\cite{Strobin} to a common setting.

\section{Preliminaries and Notation}
For a metric space $(M,d)$, $z\in M$ and $r>0$, we use the notations
\[
  B(z,r):=\{x\in M\colon d(x,z)<r\}\qquad \text{and}\qquad \bar{B}(z,r) := \{x\in M\colon d(x,z)\leq r\}
\]
for open balls and closed balls, respectively. Since we are dealing with metric spaces of mappings defined on metric spaces, we sometimes want to stress the space in which the balls are defined. In this case we add in the sequel the name of the space or of the metric as a subscript. We call a mapping $\varphi\colon (a,b) \to \mathbb{R}$ \emph{increasing} if it satisfies $\varphi(s)\geq \varphi(t)$ whenever $s\geq t$. We call it \emph{strictly increasing} if $\varphi(s)>\varphi(t)$ whenever $s>t$.

\subsection{Hyperbolic metric spaces}
A \emph{geodesic metric space} is a metric space $(X,\rho)$ with the property that for all pairs of points $x,y\in X$ there is an isometry $c\colon [0,\rho(x,y)]\to X$ with $c(0)=x$ and $c(\rho(x,y))=y$. The image of such an isometry is sometimes called a \emph{metric segment connecting the points $x$ and $y$} or a \emph{geodesic}. If, in addition, the metric segments are unique, we call $(X,\rho)$ a \emph{uniquely geodesic space}. Note that in particular every convex subset of a Banach space is geodesic and every convex subset of a strictly convex Banach space is uniquely geodesic.
We consider geodesic spaces together with a family $\mathcal{S}$ of metric segments with the following properties:
\begin{enumerate}
\item For each pair of points $x,y\in X$ the family $\mathcal{S}$ contains a unique metric segment connecting $x$ and $y$. We denote this segment by $[x,y]$.
\item For every $x,y\in X$ and for all $z,w\in [x,y]$ we have $[z,w]\subset [x,y]$.
\end{enumerate}
In other words, for each pair $x,y\in X$ the family $\mathcal{S}$ contains a unique segment connecting them and it is closed under taking subsegments. For a uniquely geodesic space $X$, we choose $\mathcal{S}$ to be the family of all metric segments.

Let us mention an example of a metric space which is not uniquely geodesic but admits a natural choice of such a family~$\mathcal{S}$. Let $C\subset \ell_1$ be a closed and convex subset containing at least three non-colinear points. Then geodesics in $C$ are not unique but the family $\mathcal{S}$ consisting of all segments of the form
\begin{equation}\label{eq:SegmentConvComb}
  [x,y] := \{(1-\lambda) x + \lambda y\colon \lambda \in [0,1]\}, \qquad x,y\in C
\end{equation}
contains a unique segment for each pair of points and satisfies the above requirements. In the sequel we assume that given a geodesic metric space $X$, a fixed family~$\mathcal{S}$ is chosen and we do not mention it explicitly. The existence of unique metric segments allows for the definition of a convex combination of points in $X$, that is, given points $x,y\in X$ and a number $\lambda\in[0,1]$, we define the point
\[
  (1-\lambda)x \oplus \lambda y \in X
\]
as the unique element of $[x,y]$ satisfying
\begin{equation}\label{eq:HypLinComb}
  \rho(x,(1-\lambda)x \oplus \lambda y) = \lambda \rho(x,y) \qquad\text{and}\qquad
  \rho((1-\lambda)x \oplus \lambda y, y) = (1-\lambda) \rho(x,y).
\end{equation}
A triple $(X,\rho,\mathcal{S})$ where $(X,\rho)$ is a geodesic metric space and $\mathcal{S}$ is such a family of metric segments is called a \emph{hyperbolic space} if it satisfies
\begin{equation}\label{eq:HypIneq}
  \rho((1-\lambda) x\oplus \lambda y, (1-\lambda )x\oplus \lambda z) \leq \lambda \rho(y,z)
\end{equation}
for all $x,y,z\in X$ and all $\lambda\in [0,1]$.  Geometrically this means that triangles in~$X$ have to be thinner than Euclidean triangles with the same side lengths. Since the family $\mathcal{S}$ is closed under taking subsegments, we obtain
\begin{equation}\label{eq:HypLinComb2}
  \rho((1-\mu)x\oplus \mu y,(1-\lambda)x \oplus \lambda y) = |\lambda-\mu| \rho(x,y)
\end{equation}
from~\eqref{eq:HypLinComb} in the following way: assume w.l.o.g. that $\mu\leq \lambda$, and observe that $(1-\mu)x\oplus \mu y \in [x,(1-\lambda)x\oplus \lambda y]$ since this segment has to be a subset of $[x,y]$ and
\[
  (1-\mu)x\oplus \mu y = \left(1-\frac{\mu}{\lambda}\right) x \oplus \frac{\mu}{\lambda} ((1-\lambda) x \oplus \lambda y)
\]
which by~\eqref{eq:HypLinComb} implies that $\rho((1-\mu)x\oplus \mu y, (1-\lambda) x \oplus \lambda y) = (1-\frac{\mu}{\lambda})\lambda\rho(x,y) = |\lambda-\mu|\rho(x,y)$.

Note that this definition of hyperbolic spaces is slightly more general than the one of~\cite{RS1990} since we do note require that every pair of points be contained in a metric line, that is, an isometric copy of the real line~$\mathbb{R}$. It is also more general than the one of a $\mathrm{CAT}(0)$ space (see, for example, \cite[p.~158]{BH1999}) because we require the triangle condition only for points $(1-\lambda) x\oplus \lambda y$ and $(1-\lambda )x\oplus \lambda z$ and not for arbitrary points on the triangle. In particular, every closed and convex subset of a Banach space with the metric induced by the norm and the family of metric segments of the form~\eqref{eq:SegmentConvComb} is a hyperbolic metric space. A subset $C\subset X$ of a geodesic metric space $X$ is called \emph{convex} if for all $x,y\in C$ it contains the metric segment connecting these points, that is, $[x,y]\subset C$. We now check that in hyperbolic spaces balls are convex. For a hyperbolic space $X$ a point $z\in X$ and $R>0$ we consider $x,y\in B(z,R)$. For $\lambda\in(0,1)$ we use the triangle inequality, \eqref{eq:HypIneq} and~\eqref{eq:HypLinComb} to obtain
\begin{align*}
  \rho((1-\lambda)x\oplus\lambda y, z) &\leq \rho((1-\lambda)x\oplus\lambda y, (1-\lambda)x\oplus\lambda z) + \rho((1-\lambda)x\oplus\lambda z, z) \\ &\leq \lambda \rho(y,z) + (1-\lambda) \rho(x,z) < \lambda R + (1-\lambda) R =R
\end{align*}
which shows that $[x,y]\subset B(z,R)$. A similar argument works for closed balls. Note that given a hyperbolic space $(X,\rho,\mathcal{S})$ and a convex subset $C$, if we choose $\mathcal{S}_{C}$ as the family of all segments in $\mathcal{S}$ connecting points in $C$, then the triple $(C,\rho,\mathcal{S}_{C})$ is also a hyperbolic space.

The following lemma is in essence Lemma~2 of~\cite{IvesPreiss} where it is stated for convex subsets of Banach spaces.
\begin{lemma}\label{lem:LipschitzSubset}
  Let $X$ be a geodesic space, $Y$ a metric space and $\{Z_i\}_{i=1}^{\infty}$ a countable family of sets covering the space $X$. Let $f\colon X\to Y$ be a continuous mapping the restrictions of which to the sets $Z_i$ are Lipschitz and satisfy $\Lip(f|_{Z_i})\leq L$ for some $L>0$. Then $f$ is Lipschitz with $\Lip f \leq L$.
\end{lemma}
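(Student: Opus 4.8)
The plan is to reduce the statement to a claim about a single geodesic and then to settle that claim by a transfinite Baire-category argument. Since $f$ is continuous, replacing each $Z_i$ by its closure preserves the hypothesis $\Lip(f|_{Z_i})\le L$ and still yields a countable cover, so we may assume the $Z_i$ are closed. Fix $x,y\in X$, put $d:=\rho(x,y)$, let $c\colon[0,d]\to X$ be a geodesic with $c(0)=x$ and $c(d)=y$, and set $h:=f\circ c$ and $C_i:=c^{-1}(Z_i)$. Then $h$ is continuous, the sets $C_i$ are closed and cover $[0,d]$, and $h|_{C_i}$ is $L$-Lipschitz because $c$ is an isometry. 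It therefore suffices to prove that such an $h$ is $L$-Lipschitz, for then $d_Y(f(x),f(y))=d_Y(h(0),h(d))\le Ld=L\rho(x,y)$, and $x,y$ were arbitrary.

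So let $h\colon[0,d]\to Y$ be continuous with $[0,d]=\bigcup_iC_i$, each $C_i$ closed and $\Lip(h|_{C_i})\le L$. Put $F_0:=[0,d]$, and given a closed set $F_\alpha$ let $U_\alpha:=\bigcup_i\operatorname{int}_{F_\alpha}(C_i\cap F_\alpha)$, which is open and dense in $F_\alpha$ by Baire's theorem; set $F_{\alpha+1}:=F_\alpha\setminus U_\alpha$ and $F_\lambda:=\bigcap_{\alpha<\lambda}F_\alpha$ at limit ordinals. Since $[0,d]$ is second countable, this decreasing transfinite sequence of closed sets reaches $\emptyset$ at some countable ordinal $\alpha_0$. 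Writing $W_\alpha:=[0,d]\setminus F_\alpha$, I would prove by transfinite induction the statement $I(\alpha)$: \emph{$d_Y(h(s),h(t))\le L(t-s)$ whenever $s\le t$ and $(s,t)\subseteq W_\alpha$}. Since $W_{\alpha_0}=[0,d]$, the case $I(\alpha_0)$ is exactly the assertion that $h$ is $L$-Lipschitz.

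Here $I(0)$ is vacuous. At a limit ordinal $\lambda$ we have $W_\lambda=\bigcup_{\alpha<\lambda}W_\alpha$; for $(s,t)\subseteq W_\lambda$ and $s<u\le v<t$, compactness forces $[u,v]\subseteq W_\alpha$ for one $\alpha<\lambda$, so $I(\alpha)$ applies, and letting $u\downarrow s$, $v\uparrow t$ and using continuity of $h$ gives $I(\lambda)$. For the successor step, write $\operatorname{int}_{F_\alpha}(C_i\cap F_\alpha)=O_i\cap F_\alpha$ with $O_i$ open and $O_i\cap F_\alpha\subseteq C_i$, so that $W_{\alpha+1}=W_\alpha\cup\bigcup_iO_i$ is open; given $(s,t)\subseteq W_{\alpha+1}$ and $s<u\le v<t$, a Lebesgue-number argument partitions $[u,v]$ into finitely many subintervals, each contained in $W_\alpha$ or in a single $O_i$. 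On one inside $W_\alpha$, $I(\alpha)$ gives the bound; on one $[a,b]\subseteq O_i$ meeting $F_\alpha$, letting $p,q$ be the least and greatest points of $[a,b]\cap F_\alpha$ we get $p,q\in C_i$, hence $d_Y(h(p),h(q))\le L(q-p)$, while $(a,p),(q,b)\subseteq W_\alpha$ are handled by $I(\alpha)$, and the three estimates sum to $d_Y(h(a),h(b))\le L(b-a)$ (if $[a,b]$ misses $F_\alpha$, use $I(\alpha)$ directly). Summing over the partition and passing to the limit in $u,v$ yields $I(\alpha+1)$. The one genuine obstacle is the closed nowhere-dense ``bad set'' on which the cover gives no local Lipschitz information and where the merely continuous $h$ may oscillate faster than linearly, so that a naive ``supremum of good points'' argument stalls; the transfinite peeling circumvents this by always removing a piece of the current bad set that is relatively open and lies in a single $C_i$, so that its endpoints are joined by $h|_{C_i}$, while $I(\alpha)$ keeps the estimates already obtained valid across the remaining gaps. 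The reduction, the termination of the peeling, and the Lebesgue-number partition are all routine.
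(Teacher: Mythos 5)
Your proof is correct, and it uses the same initial reduction as the paper (pull everything back along a geodesic $c$ to reduce to the one-dimensional statement on $[0,\rho(x,y)]$), but then settles that one-dimensional claim by a genuinely different argument. The paper's proof, following Ives--Preiss, is a short contradiction argument: it considers $h(t)=\rho_Y(g(0),g(t))-Lt$, picks a value $b\in[h(0),h(a))$ avoiding the countable image $h(S)$ of the set of suprema $S=\{\sup M_i\}$, takes the largest preimage $t$ of $b$, and derives a contradiction by showing that $t$ must equal $\sup M_i$ for the index $i$ containing it. Your proof instead first closes the sets (using continuity of $f$), then runs a Cantor--Bendixson-style transfinite peeling on $[0,d]$ driven by Baire's theorem, and proves the Lipschitz estimate by transfinite induction over the resulting increasing chain of open sets $W_\alpha$, using a Lebesgue-number partition and a min/max-of-the-residual-set trick at the successor stage. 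Both approaches are sound; the paper's is considerably shorter and more elementary (no ordinals, no Baire category), whereas yours is more systematic and arguably illuminates \emph{why} a countable cover suffices (the peeling terminates at a countable ordinal precisely because of second countability), at the cost of substantially more machinery. The key insight you identify---that a ``supremum of good points'' argument stalls on the nowhere-dense residual set---is exactly what the paper's choice of a generic level $b\notin h(S)$ sidesteps in one stroke.
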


Although the following proof is essentially the one of Lemma~2 in~\cite{IvesPreiss}, we include it for the convenience of the reader and to keep the paper self-contained.

\begin{proof}
  Let $x,y\in X$ be arbitrary. We have to show that $\rho_Y(f(x),f(y))\leq L \rho_X(x,y)$. Since $X$ is geodesic, there is an isometry $c\colon[0,\rho_X(x,y)] \to X$ with $c(0)=x$ and $c(\rho_X(x,y))=y$. We set $a:=\rho_X(x,y)$ and consider the mapping $g\colon [0,a]\to Y$ with $g(t):= f(c(t))$. In order to show that $\rho_Y(f(x),f(y))\leq L \rho_X(x,y)$ we suppose to the contrary that $\rho_Y(g(0),g(a))=\rho_Y(f(x),f(y)) > L \rho_X(x,y)$. We set $M_i=c^{-1}(Z_i)$ and observe that $[0,a]=\bigcup_{i=1}^{\infty} M_i$. We set
  \[
    h\colon[0,a]\to\mathbb{R}, \qquad t \mapsto \rho_Y(g(0),g(t))-Lt
  \]
  and observe that, as the composition of continuous mappings, $h$ is a continuous function and by assumption satisfies $h(0)=0 < h(a)$. We set $S:=\{\sup(M_i)\colon i\in\mathbb{N}, \; M_i \neq \emptyset\}$. This is a countable subset of $[0,a]$ and hence $h(S)$ is countable too. Therefore we may pick $b\in [h(0),h(a))\setminus h(S)$ and use the intermediate value theorem and the observation that $h^{-1}(b)$ is a closed subset of the compact interval $[0,a]$ to obtain the largest $t\in[0,a]$ with $h(t)=b$. By this choice of $t$ we have $h(s)>h(t)$ for all $t<s\leq a$ and hence
  \[
    \rho_Y(g(s),g(t)) \geq \rho_Y(g(0),g(s)) - \rho_Y(g(0),g(t)) = h(s)-h(t) + L(s-t) > L(s-t).
  \]
  Pick $i\in\mathbb{N}$ with $t\in M_i$ and note that by assumption $\rho_Y(g(\tau),g(t)) \leq L \rho_X(\gamma(\tau),\gamma(t))=L |\tau-t|$ for all $\tau\in M_i$ and hence the above implies that $t= \max M_i$ and therefore $t\in S$, a contradiction to $h(t)=b\not\in h(S)$.
\end{proof}

\subsection{Notions of porosity}

Let $(M,d)$ be a metric space, $A\subset M$, $x\in A$, $\eta>0$ and $\phi\colon (0,\eta)\to (0,\infty)$ be an increasing function. Given $r>0$, we define
\[
  \gamma(x,r,A) := \sup \{s>0 \colon  \exists x' \in M \;\text{with}\; B_M(x',s)\subset B_M(x,r)\setminus A\},
\]
where we use the convention that $\sup\emptyset = - \infty$. The set $A$ is called \emph{$\phi$-lower porous at $x$} if
\[
  \liminf_{r\to 0} \frac{\phi(\gamma(x,r,A))}{r} > 0.
\]
A set $A$ is called $\phi$-lower porous if it is $\phi$-lower porous at all points $x\in A$. This notion has been introduced in~\cite[2.7]{Zajicek2} and in~Definition~2.1 in~\cite{Zajicek}. Since we do not use the notion of $\phi$-upper porosity, we refer to $\phi$-lower porous sets simply as $\phi$-porous sets. A set $A$ is called \emph{porous} if it is $\phi$-porous for $\phi(t)=t$ and \emph{$\sigma$-porous} if it is a countable union of porous sets. This notion of porosity is called lower porosity by L.~Zaj\'{\i}\v{c}ek in~\cite{Zajicek}. We will heavily use the following characterisation of $\phi$-porous sets due to M.~Dymond.

\begin{lemma}[Lemma~2.2 in~\cite{Dymond2021}]\label{lem:PorousMichael}
  Let $(M,d)$ be a metric space without isolated points, $A\subset M$, $x\in A$, $\eta>0$ and
  \[
    \phi\colon (0,\eta) \to (0,\infty)
  \]
  be a strictly increasing, concave function with $\lim_{t\to 0} \phi(t)=0$. Then $A$ is $\phi$-porous at $x$ if and only if there are an $r_0>0$ and an $\alpha\in(0,1)$ such that for every $r\in (0,r_0)$ there is an $x'\in M$ such that $0<d(x,x')\leq r$ and $B_M(x',\phi^{-1}(\alpha r)) \cap A = \emptyset$.
\end{lemma}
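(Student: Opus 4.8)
I would prove the two implications separately, after recording one observation used throughout: since $x\in A$, one has $\gamma(x,r,A)\le r$ for every $r>0$. Indeed, if $B_M(x',s)\subset B_M(x,r)\setminus A$, then $x\notin B_M(x',s)$ while $x'\in B_M(x,r)$, so $s\le d(x,x')<r$. Hence, for $0<r<\eta$, the quantity $\gamma(x,r,A)$ is either $-\infty$ or a real number lying in $(0,r]\subset(0,\eta)$; in particular $\gamma(x,r,A)\to0$ as $r\to0$, and $\phi(\gamma(x,r,A))$ is well defined whenever it is positive. I would also use the elementary fact that a strictly increasing concave $\phi$ with $\lim_{t\to0}\phi(t)=0$ is continuous and admits a continuous strictly increasing inverse $\phi^{-1}$ on $(0,\sup\phi)$ with $\phi^{-1}(u)\to0$ as $u\to0$.

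\emph{From $\phi$-porosity to the covering condition.} Assume $A$ is $\phi$-porous at $x$. Pick $\beta\in(0,1)$ with $\beta<\liminf_{r\to0}\phi(\gamma(x,r,A))/r$; then there is $r_0\in(0,\eta)$ with $\phi(\gamma(x,r,A))\ge\beta r$ for all $r\in(0,r_0)$. Since this is positive, $\gamma(x,r,A)$ is finite and positive, hence lies in $(0,r]$, and after possibly shrinking $r_0$ so that $\beta r$ stays in the domain of $\phi^{-1}$, applying $\phi^{-1}$ gives $\gamma(x,r,A)\ge\phi^{-1}(\beta r)$. Set $\alpha:=\beta/2\in(0,1)$. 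Since $\phi^{-1}(\alpha r)<\phi^{-1}(\beta r)\le\gamma(x,r,A)$ and $\gamma(x,r,A)$ is a supremum, there exist $x'\in M$ and $s>\phi^{-1}(\alpha r)$ with $B_M(x',s)\subset B_M(x,r)\setminus A$. Then $B_M(x',\phi^{-1}(\alpha r))\subset B_M(x',s)$ is disjoint from $A$, while $d(x,x')<r$ because $x'\in B_M(x,r)$ and $d(x,x')>0$ because $x'\in B_M(x,r)\setminus A$ whereas $x\in A$. This is exactly the asserted condition.

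\emph{From the covering condition to $\phi$-porosity.} The plan is to convert the given $A$-avoiding ball into one contained in $B_M(x,r)$ via the triangle inequality, at the price of first shrinking the radius. Fix a parameter $\theta\in(0,1)$ to be chosen, take $r\in(0,r_0)$ and apply the hypothesis at radius $\theta r<r_0$: there is $x'$ with $d(x,x')\le\theta r$ and $B_M(x',\phi^{-1}(\alpha\theta r))\cap A=\emptyset$. If $\theta r+\phi^{-1}(\alpha\theta r)\le r$, then $B_M(x',\phi^{-1}(\alpha\theta r))\subset B_M(x,r)\setminus A$, so $\gamma(x,r,A)\ge\phi^{-1}(\alpha\theta r)$ and, applying $\phi$, $\phi(\gamma(x,r,A))\ge\alpha\theta r$; hence $\liminf_{r\to0}\phi(\gamma(x,r,A))/r\ge\alpha\theta>0$. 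It remains to choose $\theta$ so that $\theta r+\phi^{-1}(\alpha\theta r)\le r$ for all small $r$, and this is the point where concavity is needed: since $\phi$ is concave with $\phi(0)=0$, the ratio $\phi(t)/t$ is non-increasing, so $\phi(t)/t\ge\phi(t_1)/t_1=:m>0$ for $0<t\le t_1$, where $t_1\in(0,\eta)$ is fixed; equivalently $\phi^{-1}(u)\le u/m$ for $0<u\le\phi(t_1)$. Then $\theta r+\phi^{-1}(\alpha\theta r)\le\theta r+(\alpha\theta/m)r\le r$ as soon as $\theta\le(1+\alpha/m)^{-1}$, so $\theta:=(1+\alpha/m)^{-1}\in(0,1)$ does the job, provided $r_0$ is shrunk so that the estimate $\phi^{-1}(u)\le u/m$ applies at $u=\alpha\theta r$.

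The main obstacle, as I see it, is the reverse implication, whose real content is recognising that concavity of $\phi$ is exactly what yields the linear bound $\phi^{-1}(u)\le u/m$ near $0$, and thereby keeps the radius lost in the triangle-inequality step comparable to $r$, so that the ball $B_M(x',\phi^{-1}(\alpha\theta r))$ still fits inside $B_M(x,r)$. The other ingredients — approximating the supremum defining $\gamma$, the bound $\gamma(x,r,A)\le r$, and using $x\in A$ to get $d(x,x')>0$ — are routine.
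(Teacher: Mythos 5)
Your proof is correct. The paper itself does not prove this lemma — it is cited verbatim as Lemma~2.2 of Dymond's preprint — so there is no in-paper argument to compare against, but your argument is sound and is the natural one. Both directions are handled properly: the forward direction correctly unwinds the liminf, uses $\phi^{-1}(\alpha r)<\phi^{-1}(\beta r)\le\gamma(x,r,A)$ together with the supremum definition of $\gamma$ to extract an $A$-avoiding ball, and notes that $x\in A$ forces $d(x,x')>0$; the reverse direction correctly identifies that the only real issue is keeping $B_M(x',\phi^{-1}(\alpha\theta r))$ inside $B_M(x,r)$, and that concavity of $\phi$ together with $\lim_{t\to 0}\phi(t)=0$ gives the linear lower bound $\phi(t)\ge m t$ near $0$ (equivalently $\phi^{-1}(u)\le u/m$), which lets the shrinkage factor $\theta=(1+\alpha/m)^{-1}$ be chosen independently of $r$. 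The one small blemish is the preliminary remark that "$\gamma(x,r,A)\to 0$ as $r\to 0$": as you yourself note, $\gamma(x,r,A)$ may be $-\infty$, in which case this limit statement is not literally meaningful; but this observation plays no role in either implication, since in the forward direction $\phi$-porosity forces $\gamma(x,r,A)>0$ for small $r$, and in the reverse direction you exhibit positive lower bounds directly.
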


In the case of $\phi(t)=t$ this lemma recovers the classical definition of porous sets, that is, a subset $A$ of a metric space $(M,d)$ is called \emph{porous at $x\in A$} if there are $r_0>0$ and $\alpha>0$ such that for every $r\in (0,r_0)$, there is a point $y\in M$ with $d(x,y)<r$ and $B(y,\alpha r)\cap A=\emptyset$. The set $A$ is called \emph{porous} if it is porous at all its elements.

For a detailed discussion of different variants of porosity, we refer the interested reader to~\cite{Zajicek}. Note that although our definition of porous sets differs from the one in some of the articles mentioned in the introduction, for $\sigma$-porous sets these definitions agree. A detailed discussion of the relationships among these notions of porosity can be found in the introduction of~\cite{BDR2017}. Note that while all variants of porosity are metric properties, they imply that the set is meagre and hence also small in a topological sense.

\subsection{Two classes of metrics generating the topology of uniform convergence on bounded sets}
For an unbounded complete hyperbolic metric space $X$ we consider the space
\[
  \mathcal{M} := \{f\colon X\to X\colon \Lip f \leq 1\}
\]
of nonexpansive self-mappings. We present two classes of metrics generating the topology of uniform convergence on bounded subsets of~$X$. One of these is defined as a series and the other one as a weighted variant of the metric of uniform convergence. The first metric is a generalisation of the metric
\[
  d_{\theta}(f,g) = \sum_{n=1}^{\infty} 2^{-n} \frac{d_{n,\theta}(f,g)}{1+d_{n,\theta}(f,g)}
\]
where we fix $\theta\in X$ and set
\[
  d_{n,\theta}(f,g) = \sup\{\rho(f(x),f(y))\colon x,y\in \bar{B}(\theta,n)\}.
\]
This type of metric can be seen as a variant of a natural metric on infinite products of metric spaces. Instead of the quotient $\frac{d_{n,\theta}(f,g)}{1+d_{n,\theta}(f,g)}$ it is also possible to use $\min\{d_{n,\theta}(f,g),1\}$. Metrics of this type are used for example in~\cite[p.~323]{Engelking} and~\cite{Rolewicz}.

The other family of metrics we use here are the metrics
\[
  d_{\theta,s}(f,g) = \sup_{x\in X}\frac{\rho(f(x),g(x))}{1+\rho(x,\theta)^s}
\]
for $s > 1$ and some fixed $\theta\in X$. In the limiting case $s=1$ this is the metric used in the article~\cite{RZ2016}. In Proposition~\ref{prop:dsthetaTop} we show that for $s>1$ this metric generates the topology of uniform convergence on bounded sets whereas for $s=1$ it does not.

Note that in the case of a bounded space $X$ both of these metrics are equivalent to the metric of uniform convergence $d_{\infty}(f,g):=\sup_{x\in X} \{\rho(f(x),g(x)\}$: 
We set $C:=\diam X$ and $N := \lceil C\rceil$. For the first metric observe that $d_{n,\theta}(f,g) = d_\infty(f,g)$ for all $n\geq N$ and $d_{n,\theta}(f,g) \leq d_{\infty}(f,g)$ for $n\leq N$. Since $t\mapsto \frac{t}{1+t}$ is an increasing function this implies that
\[
  \frac{1}{1+C}\Big(\sum_{n=N}^{\infty} 2^{-n}\Big) d_{\infty}(f,g) \leq \sum_{n=1}^{\infty} 2^{-n} \frac{d_{n,\theta}(f,g)}{1+d_{n,\theta}(f,g)} \leq d_{\infty}(f,g).
\]
For the metric $d_{\theta,s}$, since $1 \leq 1+\rho(x,\theta)^s \leq 1 + C^s$, we have
\[
  \frac{1}{1+C^s} d_{\infty}(f,g) \leq d_{\theta,s}(f,g) \leq  d_{\infty}(f,g).
\]
This shows that the metrics $d_\theta$, $d_{\theta,s}$ and $d_\infty$ are equivalent when $X$ is bounded.

\subsubsection{Metrics defined by a series}
Let $I$ be one of the intervals $(0,1)$ or $(0,1]$ and let
\[
  \phi \colon I \to (0,\infty)
\]
be a continuous strictly increasing function with the following properties.
\begin{enumerate}
\item[(C1)] \label{C1} There is an $\eta_\phi\in I$ such that $\phi$ is concave on $(0,\eta_\phi)$ and $\phi(\eta_{\phi})\geq\eta_\phi$.
\item[(C2)] \label{C2} $\displaystyle \lim_{t\to 0} \phi(t) = 0$ and there is a $t\in I$ with $\phi(t)=1$.
\item[(C3)] \label{C3}For every $k\in\mathbb{N}$ there is a $C_k>0$ such that
  \[
    \phi^{-1}\left(\frac{1}{m}\right) \leq C_k \phi^{-1}\left(\frac{1}{k+m}\right)
  \]
  for all $m\in\mathbb{N}$.
\item[(C4)] \label{C4} The series $\displaystyle\sum_{n=1}^{\infty} n \phi^{-1}\left(\frac1n\right)$ converges. We denote by $C_{\phi}$ the maximum of its sum and one.
\end{enumerate}

\begin{remark}\label{rem:PhiInvConv}
  Note that since $\phi$ is increasing and concave on $(0,\eta_\phi)$, its inverse is also increasing and convex on~$(0,\phi(\eta_\phi))$. In particular, (C1) and (C2) imply that
  \begin{equation}\label{eq:PhiInvConvIneq}
    \phi^{-1}(at) \leq t \phi^{-1}(a)
  \end{equation}
  for $t\in (0,1)$ and $a\in (0,\phi(\eta_\phi)]$. In particular, we obtain for $t\in(0,\phi(\eta_\phi))$ that
  \begin{equation}\label{eq:PhiInvSmT}
    \phi^{-1}(t) = \phi^{-1}\left(\frac{t}{\phi(\eta_\phi)} \phi(\eta_\phi)\right) \leq \frac{t}{\phi(\eta_\phi)} \phi^{-1}(\phi(\eta_\phi)) = t \frac{\eta_{\phi}}{\phi(\eta_\phi)} \leq t
  \end{equation}
  since $\phi(\eta_\phi)\geq\eta_\phi$.
\end{remark}

Since a non-singleton geodesic metric space cannot contain isolated points, conditions~(C1) and~(C2) allow us to use Lemma~\ref{lem:PorousMichael} to check $\phi$-porosity for functions~$\phi$ satisfying the above conditions. Since the square root function is also strictly increasing, concave, continuous and satisfies $\sqrt{0}=0$, also $\sqrt{\phi}$ satisfies the conditions of Lemma~\ref{lem:PorousMichael}.

\begin{example}\label{ex:Exp}
  We now check that the function
  \[
    \phi\colon (0,1)\to (0,\infty), \qquad t \mapsto -\frac{1}{\log_2 t},
  \]
  where $\log_2$ denotes the dyadic logarithm, satisfies the above conditions with $\eta_\phi=\mathrm{e}^{-2}$. Since $\log_2 t$ is continuous, increasing and does not vanish on $(0,1)$, also the function $\phi$ is continuous and increasing. Since $\lim_{t\to 0} \log_2 t = -\infty$ and $\phi(\frac{1}{2})=1$ it satisfies~(C2). Since for $t<\mathrm{e}^{-2}$ we have
  \[
    \phi''(t) = -\log 2\frac{2+ \log t}{t^2(\log t)^3} < 0
  \]
  it also satisfies~(C1) with $\eta_\phi=\mathrm{e}^{-2}$. Since $\phi$ is strictly increasing and
  \[
    \phi(2^{-1/t}) = - \frac{1}{\log_22^{-1/t}} = t
  \]
  we have $\phi^{-1}(t) = 2^{-1/t}$. This immediately implies~(C4). In order to show that $\phi$ satisfies~(C3) for $k,m\in\mathbb{N}$, we set $C_k:=2^k$ and note that
  \[
    \phi^{-1}\left(\frac{1}{k+m}\right) = 2^{-(k+m)} = 2^{-k} \phi^{-1}\left(\frac{1}{m}\right)
  \]
  which implies in particular that
  \[
    \phi^{-1}\left(\frac{1}{m}\right) \leq C_k \phi^{-1}\left(\frac{1}{k+m}\right)
  \]
  as required.
\end{example}

\begin{example}
  We now check that the function
  \[
    \phi\colon (0,1] \to (0,\infty), \qquad t \mapsto t^{1/4}
  \]
  satisfies the above conditions. Since it is obviously continuous, increasing and concave, we only have to check conditions~(C2)--(C4). Since $\phi(1)=1$ and $\sqrt[4]{0}=0$, it satisfies~(C2). Since~$\phi^{-1}(t)=t^4$ it satisfies~(C4) and by
  \begin{align*}
    \phi^{-1}\left(\frac{1}{k+m}\right) & = \frac{1}{(k+m)^4} = \frac{1}{\sum_{j=0}^{4}\binom{4}{j} k^jm^{4-j}} = \frac{1}{m^4} \frac{1}{\sum_{j=0}^{4}\binom{4}{j} k^jm^{-j}} \\ &\geq \frac{1}{\sum_{j=0}^{4}\binom{4}{j} k^j} \phi^{-1}\left(\frac{1}{m}\right)
  \end{align*}
  it also satisfies~(C3) with
  \[
    C_k := \sum_{j=0}^{4}\binom{4}{j} k^j.
  \]
  Hence $\phi$ satisfies all the requirements.
\end{example}

We choose a $\theta\in X$, set
\[
  d_{n,\theta}(f,g) = \sup \{ \rho(f(x),g(x))\colon x\in \bar{B}(\theta, n)\}
\]
and define the metric $d_{\theta, \phi}$ by
\begin{equation}\label{eq:Metric}
  d_{\theta,\phi}(f,g) := \sum_{n=1}^{\infty} \phi^{-1}\left(\frac{1}{n}\right) \frac{d_{n,\theta}(f,g)}{1+d_{n,\theta}(f,g)}.
\end{equation}
Observe that it is well defined since
\[
  \sum_{n=1}^{\infty} \phi^{-1}\left(\frac{1}{n}\right) \frac{d_{n,\theta}(f,g)}{1+d_{n,\theta}(f,g)} \leq \sum_{n=1}^{\infty} \phi^{-1}\left(\frac{1}{n}\right) \leq \sum_{n=1}^{\infty} n \phi^{-1}\left(\frac{1}{n}\right) < \infty
\]
by~(C4). The symmetry of the expression $d_{n,\theta}(f,g)$ in $f$ and $g$ implies that $d_{\theta,\phi}$ is symmetric and the triangle inequality for $d_{\theta,\phi}$ follows from the triangle inequality for $d_{n,\theta}$ and the observation that the sum of two non-negative numbers is bounded from below by both summands. This shows that $d_{\theta,\phi}$ is indeed a metric. Moreover, given $\theta_1\neq \theta_2$, the metrics $d_{\theta_1,\phi}$ and $d_{\theta_2,\phi}$ are equivalent. In order to show this, we pick a natural number $k\in\mathbb{N}$ with $\rho(\theta_1,\theta_2)\leq k$ and observe that the triangle inequality implies that
\[
  \bar{B}(\theta_1,n) \subset \bar{B}(\theta_2,n+k) \qquad \text{and} \qquad \bar{B}(\theta_2,n) \subset \bar{B}(\theta_1,n+k)
\]
for all $n\in\mathbb{N}$. Hence, we have
\[
  d_{n,\theta_1}(f,g) \leq d_{n+k,\theta_2}(f,g) \qquad \text{and} \qquad d_{n,\theta_2}(f,g)\leq d_{n+k,\theta_1}(f,g)
\]
which, by~(C3) and the fact that $t\mapsto \frac{t}{1+t}$ is an increasing function, imply that
\begin{multline*}
  \sum_{n=1}^{\infty} \phi^{-1}\left(\frac{1}{n}\right) \frac{d_{n,\theta_2}(f,g)}{1+d_{n,\theta_2}(f,g)} \leq \sum_{n=1}^{\infty} \phi^{-1}\left(\frac{1}{n}\right) \frac{d_{n+k,\theta_1}(f,g)}{1+d_{n+k,\theta_1}(f,g)}\\ \leq C_k \sum_{n=1}^{\infty} \phi^{-1}\left(\frac{1}{n+k}\right) \frac{d_{n+k,\theta_1}(f,g)}{1+d_{n+k,\theta_1}(f,g)} \leq C_k \sum_{m=1}^{\infty} \phi^{-1}\left(\frac{1}{m}\right) \frac{d_{m,\theta_1}(f,g)}{1+d_{m,\theta_1}(f,g)}.
\end{multline*}
Since the situation is symmetric with respect to $\theta_1$ and $\theta_2$, the equivalence of the metrics follows.

The following lemma, which links the pointwise distance between two mappings to the distance in the metric of uniform convergence on bounded sets, turns out be very useful in our subsequent considerations.

\begin{lemma}\label{lem:localAndGlobalDistance}%
  Let $f,g\in\mathcal{M}$ with $d_{\theta,\phi}(f,g) \leq \frac{1}{2}\phi^{-1}(\tfrac{1}{m})$. Then we have
  \[
    \rho(f(z),g(z)) \leq \frac{2}{\phi^{-1}(\tfrac{1}{m})} d_{\theta,\phi}(f,g)
  \]
  for all $z\in \bar{B}(\theta,m)$. In particular, the inequality $d_{\theta,\phi}(f,g) \leq \frac{r}{2} \phi^{-1}(\tfrac{1}{m})$ for some $r\in(0,1]$ implies that $\rho(f(z),g(z)) \leq r$ for  all $z\in \bar{B}(\theta,m)$.  If $m\geq \frac{1}{\phi(\eta_\phi)}$ too, then the inequality $d_{\theta,\phi}(f,g) \leq r \phi^{-1}(\tfrac{1}{2m})$ for some $r\in(0,1]$ implies that $\rho(f(z),g(z)) \leq r$ for all $z\in\bar{B}(\theta,m)$.
\end{lemma}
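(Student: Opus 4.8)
The plan is to exploit the fact that $d_{\theta,\phi}$ is, by its definition~\eqref{eq:Metric}, a series of non-negative terms, so that every single summand is dominated by the whole sum. First I would keep only the $m$-th summand, which gives
\[
  \phi^{-1}\!\left(\tfrac1m\right)\frac{d_{m,\theta}(f,g)}{1+d_{m,\theta}(f,g)} \le d_{\theta,\phi}(f,g).
\]
Writing $D:=d_{\theta,\phi}(f,g)$ and $a:=d_{m,\theta}(f,g)$, this says $\frac{a}{1+a}\le \frac{D}{\phi^{-1}(1/m)}$, and the standing hypothesis $D\le\tfrac12\phi^{-1}(1/m)$ makes the right-hand side at most $\tfrac12$.

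Next I would invert the increasing map $t\mapsto t/(1+t)$: from $\frac{a}{1+a}\le c$ with $c\le\tfrac12$ one obtains $a\le\frac{c}{1-c}\le 2c$, since $1-c\ge\tfrac12$. This yields
\[
  d_{m,\theta}(f,g)\le \frac{2}{\phi^{-1}(1/m)}\,d_{\theta,\phi}(f,g),
\]
and because $d_{m,\theta}(f,g)=\sup\{\rho(f(x),g(x))\colon x\in\bar B(\theta,m)\}$ this is precisely the claimed pointwise bound on $\bar B(\theta,m)$. The first ``in particular'' then drops out by substitution: if $D\le\frac r2\phi^{-1}(1/m)$ for some $r\in(0,1]$, then since $r\le1$ the main hypothesis holds, and the displayed estimate gives $\rho(f(z),g(z))\le \frac{2}{\phi^{-1}(1/m)}\cdot\frac r2\phi^{-1}(1/m)=r$ for every $z\in\bar B(\theta,m)$.

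For the last refinement I would simply reduce it to the case already treated, by comparing $\phi^{-1}(1/(2m))$ with $\phi^{-1}(1/m)$. Applying the convexity inequality~\eqref{eq:PhiInvConvIneq} of Remark~\ref{rem:PhiInvConv} with $a=1/m$ and $t=\tfrac12$ — which is admissible exactly because the extra assumption $m\ge 1/\phi(\eta_\phi)$ guarantees $a=1/m\le\phi(\eta_\phi)$ — gives $\phi^{-1}(1/(2m))\le\tfrac12\phi^{-1}(1/m)$. Hence $D\le r\phi^{-1}(1/(2m))\le\frac r2\phi^{-1}(1/m)$, and the previous paragraph applies verbatim.

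There is no genuinely hard step here: the argument is essentially bookkeeping once one isolates a single term of the series. The two places that call for a little care are the elementary estimate $\frac{c}{1-c}\le 2c$ valid for $c\in(0,\tfrac12]$, used when inverting $t\mapsto t/(1+t)$, and making sure that~\eqref{eq:PhiInvConvIneq} is invoked only within its range of validity — which is exactly what the hypothesis $m\ge 1/\phi(\eta_\phi)$ secures.
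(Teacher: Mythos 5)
Your proof is correct and follows essentially the same route as the paper: isolate the $m$-th term of the series, use the hypothesis $d_{\theta,\phi}(f,g)\le\tfrac12\phi^{-1}(1/m)$ to bound $d_{m,\theta}(f,g)$, and reduce the final refinement to the main estimate via the convexity inequality~\eqref{eq:PhiInvConvIneq}. The only cosmetic difference is in inverting $t\mapsto t/(1+t)$: the paper first observes $d_{m,\theta}(f,g)\le 1$ and then uses $\tfrac a2\le\tfrac a{1+a}$, whereas you solve $\tfrac a{1+a}\le c$ directly to get $a\le\tfrac c{1-c}\le 2c$; both give the identical bound.
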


\begin{proof}
  By definition, for every $m\in\mathbb{N}$, we have
  \[
    d_{\theta,\phi}(f,g) = \sum_{n=1}^{\infty} \phi^{-1}\left(\tfrac{1}{n}\right) \frac{d_{\theta,n}(f, g)}{1+d_{\theta,n}(f, g)} \geq \phi^{-1}\left(\tfrac{1}{m}\right) \frac{d_{\theta,m}(f, g)}{1+d_{\theta,m}(f, g)}.
  \]
  Using the observation that
  \[
    \frac{a}{1+a} \leq \frac{1}{2} \Leftrightarrow a \leq \frac{1}{2} + \frac{a}{2} \Leftrightarrow a \leq 1,
  \]
  we conclude that $d_{\theta,m}(f, g) \leq 1$ whenever $d_{\theta,\phi}(f, g) \leq \frac{1}{2} \phi^{-1}(\frac{1}{m})$ and hence
  \[
    \frac{d_{\theta,m}(f,g)}{2} \leq \frac{d_{\theta,m}(f,g)}{1+d_{\theta,m}(f,g)} \leq \frac{d_{\theta,\phi}(f,g)}{\phi^{-1}(\tfrac{1}{m})}
  \]
  in this case. Combining these observations, we see that
  \[
    \rho(f(z),g(z)) \leq d_{\theta,m}(f, g) \leq \frac{2}{\phi^{-1}(\tfrac{1}{m})} d_{\theta,\phi}(f, g)
  \]
  for all $z\in \bar{B}(\theta,m)$ provided that $d_{\theta,\phi}(f, g) \leq
  \frac{1}{2} \phi^{-1}(\tfrac{1}{m})$. The claim for the case where $m\geq\frac{1}{\phi(\eta_\phi)}$ follows from the above inequality together with Remark~\ref{rem:PhiInvConv} which shows that
  \[
    \phi^{-1}(\tfrac{1}{2m}) \leq \tfrac{1}{2}\phi^{-1}(\tfrac{1}{m})
  \]
  for $m\geq \frac{1}{\phi(\eta_\phi)}$.
\end{proof}

\begin{proposition}
  The metric $d_{\theta,\phi}$ generates the topology of uniform convergence on bounded sets.
\end{proposition}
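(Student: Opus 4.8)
The plan is to fix a mapping $f \in \mathcal{M}$ and compare a neighbourhood basis at $f$ for the $d_{\theta,\phi}$-topology with one for the topology of uniform convergence on bounded subsets of $X$, showing that each family refines the other. The first step is to record that, since every bounded subset of $X$ is contained in a closed ball $\bar{B}(\theta,m)$ and $d_{\theta,n}(f,g)$ is nondecreasing in $n$, the sets
\[
  U_{m,\delta}(f) := \{g \in \mathcal{M} : d_{\theta,m}(f,g) < \delta\}, \qquad m \in \mathbb{N},\ \delta > 0,
\]
form a neighbourhood basis at $f$ for the topology of uniform convergence on bounded sets, whereas the $d_{\theta,\phi}$-balls $B_{d_{\theta,\phi}}(f,\varepsilon)$ form one for the topology induced by $d_{\theta,\phi}$. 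It then suffices to prove the two refinement statements below.

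For the statement that each $U_{m,\delta}(f)$ contains a $d_{\theta,\phi}$-ball about $f$, I would appeal directly to Lemma~\ref{lem:localAndGlobalDistance}. Setting $r := \tfrac12\min\{\delta,1\} \in (0,1]$, the lemma shows that $d_{\theta,\phi}(f,g) \le \tfrac{r}{2}\phi^{-1}(\tfrac1m)$ forces $\rho(f(z),g(z)) \le r$ for all $z \in \bar{B}(\theta,m)$, hence $d_{\theta,m}(f,g) \le r < \delta$. Since $\phi^{-1}(\tfrac1m) > 0$, the ball $B_{d_{\theta,\phi}}\bigl(f,\tfrac{r}{2}\phi^{-1}(\tfrac1m)\bigr)$ is a nontrivial $d_{\theta,\phi}$-neighbourhood of $f$ contained in $U_{m,\delta}(f)$.

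For the converse --- that each $d_{\theta,\phi}$-ball about $f$ contains some $U_{m,\delta}(f)$ --- I would use condition~(C4): since $\sum_{n=1}^{\infty} \phi^{-1}(\tfrac1n)$ converges, we may pick $m \in \mathbb{N}$ with $\sum_{n=m+1}^{\infty}\phi^{-1}(\tfrac1n) < \varepsilon/2$. Estimating the tail of the series~\eqref{eq:Metric} by $\tfrac{a}{1+a} \le 1$, the head by $\tfrac{a}{1+a} \le a$, and using $d_{\theta,n}(f,g) \le d_{\theta,m}(f,g)$ for $n \le m$ (because $\bar{B}(\theta,n) \subseteq \bar{B}(\theta,m)$), we obtain
\[
  d_{\theta,\phi}(f,g) \;\le\; \Bigl(\sum_{n=1}^{m}\phi^{-1}\!\bigl(\tfrac1n\bigr)\Bigr)\, d_{\theta,m}(f,g) \;+\; \frac{\varepsilon}{2}.
\]
Setting $\delta := \tfrac{\varepsilon}{2}\bigl(\sum_{n=1}^{m}\phi^{-1}(\tfrac1n)\bigr)^{-1}$, which is a positive real number as the reciprocal of a finite sum of positive terms, yields $U_{m,\delta}(f) \subseteq B_{d_{\theta,\phi}}(f,\varepsilon)$.

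Since these two inclusions hold for every $f \in \mathcal{M}$, the two topologies coincide. The only step requiring any real care is the estimate of the infinite tail of~\eqref{eq:Metric}, which is exactly where condition~(C4) enters; everything else is an immediate consequence of Lemma~\ref{lem:localAndGlobalDistance} and the monotonicity of $n \mapsto \bar{B}(\theta,n)$. One could instead argue via sequences, since both topologies are metrizable and hence first countable, checking that $d_{\theta,\phi}(f_k,f) \to 0$ if and only if $d_{\theta,m}(f_k,f) \to 0$ for all $m$ by means of the same two estimates; I prefer the neighbourhood-basis formulation since it avoids invoking any sequential characterisation of the topology.
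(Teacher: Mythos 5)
Your proof is correct and relies on the same two ingredients as the paper's: Lemma~\ref{lem:localAndGlobalDistance} to pass from $d_{\theta,\phi}$-closeness to closeness on the ball $\bar{B}(\theta,m)$, and condition~(C4) to truncate the tail of the series~\eqref{eq:Metric} in the converse direction. The paper argues via sequences after invoking first countability, whereas you argue directly via neighbourhood bases; since the estimates are identical (and you note the sequential variant yourself), this is only a cosmetic difference.
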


\begin{proof}
  Since the topology of uniform convergence on bounded sets is first-countable it is enough to work with sequences.
  Note that Lemma~\ref{lem:localAndGlobalDistance} implies that convergence with respect to $d_{\theta,\phi}$ implies uniform convergence on bounded sets. For the converse implication assume that $f_k\to f$ uniformly on bounded sets and let $\varepsilon>0$ be given. Using~(C4) we may pick an $N\in\mathbb{N}$ such that
  \[
    \sum_{n=N+1}^{\infty} \phi^{-1}\left(\tfrac{1}{n}\right) \frac{d_{\theta,n}(f_k,f)}{1+d_{\theta,n}(f_k,f)} \leq \sum_{n=N+1}^{\infty} \phi^{-1}\left(\tfrac{1}{n}\right) < \frac{\varepsilon}{2}
  \]
  for every $k\in\mathbb{N}$. Since $f_k\to f$ uniformly on bounded sets, we may choose a $K\in\mathbb{N}$ such that
  \[
    d_{\theta,n}(f_k,f) < \frac{\varepsilon}{2C_\phi}
  \]
  for all $n=1,\ldots, N$ and all $k\geq K$. Combining these inequalities, we arrive at
  \begin{align*}
    d_{\theta,\phi}(f_k,f) &= \sum_{n=1}^{N} \phi^{-1}\left(\tfrac{1}{n}\right) \frac{d_{\theta,n}(f_k,f)}{1+d_{\theta,n}(f_k,f)} + \sum_{n=N+1}^{\infty} \phi^{-1}\left(\tfrac{1}{n}\right) \frac{d_{\theta,n}(f_k,f)}{1+d_{\theta,n}(f_k,f)} \\& < \sum_{n=1}^{N} \phi^{-1}\left(\tfrac{1}{n}\right) d_{\theta,n}(f_k,f) + \frac{\varepsilon}{2} \leq \varepsilon
  \end{align*}
  for all $k\geq K$, which shows that $f_k\to f$ in $(\mathcal{M},d_{\theta,\phi})$.
\end{proof}

\begin{proposition}\label{prop:StrContrDense}
  The set of strict contractions is a dense subspace of $(\mathcal{M},d_{\theta,\phi})$. More precisely, given $f\in\mathcal{M}$ and $\gamma\in(0,1)$ the mapping
  \[
    f_\gamma(x) := (1-\gamma) f(x) \oplus  \gamma f(\theta)
  \]
  satisfies $\Lip f_\gamma \leq 1 - \gamma$ and $d_{\theta,\phi}(f,f_\gamma) \leq C_{\phi} \gamma$.
\end{proposition}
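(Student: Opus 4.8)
The plan is to establish directly the two quantitative assertions — that $\Lip f_\gamma\leq 1-\gamma$ and that $d_{\theta,\phi}(f,f_\gamma)\leq C_\phi\gamma$ — after which density of the strict contractions is immediate: given $f\in\mathcal M$ and $\varepsilon>0$, choose $\gamma\in(0,1)$ with $C_\phi\gamma<\varepsilon$; then $f_\gamma$ has Lipschitz constant $1-\gamma<1$ (so in particular $f_\gamma\in\mathcal M$ and $f_\gamma$ is a strict contraction) while $d_{\theta,\phi}(f,f_\gamma)<\varepsilon$. Note also that $f_\gamma$ genuinely maps $X$ to $X$, since $(1-\gamma)f(x)\oplus\gamma f(\theta)\in[f(x),f(\theta)]\subset X$.

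For the Lipschitz estimate I would first rewrite $f_\gamma(x)$ so that $f(\theta)$, rather than $f(x)$, plays the role of the fixed endpoint of the convex combination. Since $\mathcal S$ contains a single segment joining $f(\theta)$ and $f(x)$, the symmetry of the convex combination expressed in~\eqref{eq:HypLinComb} gives
\[
  f_\gamma(x)=(1-\gamma)f(x)\oplus\gamma f(\theta)=\gamma f(\theta)\oplus(1-\gamma)f(x).
\]
Now apply the hyperbolic inequality~\eqref{eq:HypIneq} with base point $f(\theta)$ and parameter $\lambda=1-\gamma$ to the points $f(u)$ and $f(v)$, and use that $f$ is nonexpansive:
\[
  \rho(f_\gamma(u),f_\gamma(v))=\rho\bigl(\gamma f(\theta)\oplus(1-\gamma)f(u),\,\gamma f(\theta)\oplus(1-\gamma)f(v)\bigr)\leq(1-\gamma)\rho(f(u),f(v))\leq(1-\gamma)\rho(u,v).
\]
Hence $\Lip f_\gamma\leq1-\gamma$.

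For the distance estimate, fix $n\in\mathbb N$ and $x\in\bar B(\theta,n)$. The defining property~\eqref{eq:HypLinComb} of the convex combination yields $\rho\bigl(f(x),(1-\gamma)f(x)\oplus\gamma f(\theta)\bigr)=\gamma\rho(f(x),f(\theta))$, which is at most $\gamma\rho(x,\theta)\leq\gamma n$ because $f$ is nonexpansive. Taking the supremum over $x\in\bar B(\theta,n)$ gives $d_{n,\theta}(f,f_\gamma)\leq\gamma n$, and hence, using $t/(1+t)\leq t$ together with the defining series~\eqref{eq:Metric},
\[
  d_{\theta,\phi}(f,f_\gamma)=\sum_{n=1}^{\infty}\phi^{-1}\!\left(\tfrac1n\right)\frac{d_{n,\theta}(f,f_\gamma)}{1+d_{n,\theta}(f,f_\gamma)}\leq\sum_{n=1}^{\infty}\phi^{-1}\!\left(\tfrac1n\right)\gamma n=\gamma\sum_{n=1}^{\infty}n\,\phi^{-1}\!\left(\tfrac1n\right)\leq C_\phi\gamma,
\]
where the final inequality is the definition of $C_\phi$ in condition~(C4).

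I do not anticipate a real obstacle; the only point requiring care is the bookkeeping in the Lipschitz step, where one must swap the two endpoints of the segment so that $f(\theta)$ becomes the common base point before~\eqref{eq:HypIneq} can be applied, and observe that $f_\gamma(u)$ and $f_\gamma(v)$ are then precisely of the form to which that inequality applies.
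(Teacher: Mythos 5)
Your proof is correct and follows essentially the same route as the paper: the hyperbolic inequality~\eqref{eq:HypIneq} (applied after a bookkeeping swap of the two endpoints) gives $\Lip f_\gamma\leq 1-\gamma$, and the identity~\eqref{eq:HypLinComb} combined with nonexpansivity gives $d_{n,\theta}(f,f_\gamma)\leq n\gamma$, from which the series bound and (C4) yield $d_{\theta,\phi}(f,f_\gamma)\leq C_\phi\gamma$. The paper is slightly more terse in the Lipschitz step, invoking~\eqref{eq:HypIneq} directly without making the endpoint-swap explicit, but the underlying argument is the same.
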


\begin{proof}
  For $x,y\in X$ we have
  \begin{align*}
    \rho(f_\gamma(x), f_\gamma(y)) & = \rho((1-\gamma) f(x) \oplus \gamma f(\theta),  (1-\gamma) f(y) \oplus\gamma f(\theta)) \leq (1-\gamma) \rho(f(x),f(y))\\ & \leq (1-\gamma) \rho(x,y)
  \end{align*}
  by~\eqref{eq:HypIneq} and hence $\Lip f_\gamma \leq 1-\gamma$.

  For $n\in\mathbb{N}$ and $x\in \bar{B}(\theta,n)$ we have
  \[
    \rho(f(x),f_\gamma(x)) = \rho(f(x), (1-\gamma) f(x) \oplus \gamma f(\theta)) = \gamma \rho(f(\theta),f(x)) \leq \gamma\rho(\theta,x) \leq n \gamma
  \]
  by~\eqref{eq:HypLinComb} and hence
  \begin{align*}
    d_{\theta,\phi}(f,f_\gamma) & = \sum_{n=1}^{\infty} \phi^{-1}(\tfrac{1}{n}) \frac{d_{\theta,n}(f,f_{\gamma})}{1+d_{\theta,n}(f,f_{\gamma})} \leq \sum_{n=1}^{\infty} \phi^{-1}(\tfrac{1}{n}) \frac{n\gamma}{1+n\gamma}\leq \gamma \sum_{n=1}^{\infty}n \phi^{-1}(\tfrac{1}{n}) \leq \gamma C_{\phi}
  \end{align*}
  as claimed.
\end{proof}

\begin{remark}
  Note that a similar result for the metric of uniform convergence cannot be achieved because for this metric, approximating the identity with strict contractions is not possible. In other words, every nonexpansive mapping which is sufficiently close to the identity has to have Lipschitz constant one.
\end{remark}

\subsubsection{Metrics of weighted uniform convergence}
We fix $s\geq 1$ and $\theta\in X$ and define the metric
\[
  d_{\theta,s}(f,g) := \sup_{x\in X} \frac{\rho(f(x),g(x))}{1+\rho(x,\theta)^s}
\]
which is well defined since
\[
  \frac{\rho(f(x),g(x))}{1+\rho(x,\theta)^s} \leq \frac{2 \rho(x,\theta) + \rho(f(\theta), g(\theta))}{1+\rho(x,\theta)^s} \leq \rho(f(\theta), g(\theta)) + 2
\]
which follows from the triangle inequality because $f$ and $g$ are nonexpansive. The case of $s=1$ is the metric considered in~\cite{RZ2016} which by Proposition~\ref{prop:dsthetaTop} generates a topology which is strictly finer than the one of uniform convergence on bounded sets. Together with this metric, we also consider the function
\begin{equation}\label{eq:PsiS}
  \psi_s \colon (0,\infty) \to (0,\infty), \qquad t\mapsto t^{1/s},
\end{equation}
which is strictly increasing, concave and satisfies $\lim_{t\to 0} \psi_s(t)=0$. In particular we are able to check $\psi_s$-porosity using Lemma~\ref{lem:PorousMichael}.

\begin{proposition}
  Let $s\geq 1$. For $\theta_1, \theta_2\in X$ the metrics $d_{\theta_1,s}$ and  $d_{\theta_2,s}$ are equivalent.
\end{proposition}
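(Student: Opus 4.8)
The plan is to prove the stronger statement that $d_{\theta_1,s}$ and $d_{\theta_2,s}$ are bi-Lipschitz equivalent, with a constant depending only on $s$ and on $D:=\rho(\theta_1,\theta_2)$; this of course implies that the two metrics generate the same topology. Since the whole situation is symmetric in $\theta_1$ and $\theta_2$, it suffices to exhibit a constant $C=C(s,D)\ge 1$ such that $d_{\theta_2,s}(f,g)\le C\,d_{\theta_1,s}(f,g)$ for all $f,g\in\mathcal{M}$; interchanging $\theta_1$ and $\theta_2$ then gives the reverse bound with the same constant.

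The key step is to compare the two weights $1+\rho(x,\theta_1)^s$ and $1+\rho(x,\theta_2)^s$ pointwise, uniformly in $x\in X$. By the triangle inequality $\rho(x,\theta_1)\le\rho(x,\theta_2)+D$, and since $t\mapsto t^s$ is convex on $[0,\infty)$ for $s\ge 1$ we have $(a+b)^s\le 2^{s-1}(a^s+b^s)$ for all $a,b\ge 0$. Combining these,
\[
  1+\rho(x,\theta_1)^s \le 1+2^{s-1}\rho(x,\theta_2)^s+2^{s-1}D^s = \bigl(1+2^{s-1}D^s\bigr)+2^{s-1}\rho(x,\theta_2)^s \le C\bigl(1+\rho(x,\theta_2)^s\bigr),
\]
where $C:=\max\{\,1+2^{s-1}D^s,\ 2^{s-1}\,\}$; indeed the constant term is bounded by $C$ and the coefficient of $\rho(x,\theta_2)^s$ is bounded by $C$. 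The identical computation with $\theta_1$ and $\theta_2$ interchanged yields $1+\rho(x,\theta_2)^s\le C\bigl(1+\rho(x,\theta_1)^s\bigr)$ with the very same $C$, because $D$ is symmetric.

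It then remains only to divide through and take suprema. From the first inequality, for every $x\in X$,
\[
  \frac{\rho(f(x),g(x))}{1+\rho(x,\theta_2)^s} \le C\,\frac{\rho(f(x),g(x))}{1+\rho(x,\theta_1)^s} \le C\,d_{\theta_1,s}(f,g),
\]
and passing to the supremum over $x$ gives $d_{\theta_2,s}(f,g)\le C\,d_{\theta_1,s}(f,g)$; the symmetric argument gives $d_{\theta_1,s}(f,g)\le C\,d_{\theta_2,s}(f,g)$, which completes the proof. There is no genuine obstacle here: the only point requiring a little care is that the comparison constant $C$ must be chosen independently of $x$, and this is handled entirely by the elementary convexity estimate for $t\mapsto t^s$ together with the triangle inequality. (In the limiting case $s=1$ one simply gets $C=1+D$, recovering the familiar bound $1+\rho(x,\theta_1)\le(1+D)\bigl(1+\rho(x,\theta_2)\bigr)$.)
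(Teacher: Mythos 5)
Your proof is correct and follows essentially the same route as the paper: triangle inequality plus the convexity bound $(a+b)^s\le 2^{s-1}(a^s+b^s)$ to compare the weights pointwise, then divide and take suprema. The only (inessential) difference is your slightly sharper constant $\max\{1+2^{s-1}D^s,\,2^{s-1}\}$ in place of the paper's $2^s(1+D^s)$.
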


\begin{proof}
  Using the triangle inequality and the convexity of $t\mapsto t^s$ we obtain
  \begin{align*}
    1 + \rho(x,\theta_1)^s &\leq 1 + (\rho(x,\theta_2)+\rho(\theta_1,\theta_2))^s \leq 1 + 2^{s-1} ( \rho(x,\theta_2)^s + \rho(\theta_1,\theta_2)^s )\\
                           &\leq 2^{s}(1+\rho(x,\theta_2)^s) + 2^s\rho(\theta_1,\theta_2)^s (1+\rho(x,\theta_2)^s) \leq 2^{s} (1+ \rho(\theta_1,\theta_2)^s) \left(1+\rho(x, \theta_2)^s\right)
  \end{align*}
  since $\max\{1,2^{s-1}\}\leq 2^s$ and deduce that
  \[
    d_{\theta_1,s}(f,g) \geq \frac{1}{2^{s}(1+\rho(\theta_1,\theta_2)^s)} d_{\theta_2,s}(f,g).
  \]
  Since the above situation is symmetric with respect to $\theta_1$ and $\theta_2$, we conclude that the metrics $d_{\theta_1,s}$ and $d_{\theta_2,s}$ are equivalent.
\end{proof}

Also for these metrics we have results similar to the ones for the metric $d_{\theta,\phi}$.

\begin{lemma}\label{lem:sLocalToGlobal}
  For $f,g\in\mathcal{M}$ we have
  \[
    \rho(f(x),g(x)) \leq (1+\rho(x,\theta)^s) d_{\theta,s}(f,g)
  \]
  for $x\in X$.
\end{lemma}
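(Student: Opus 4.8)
The statement is an immediate consequence of the definition of the supremum, so the plan is very short. By definition,
\[
  d_{\theta,s}(f,g) = \sup_{y\in X} \frac{\rho(f(y),g(y))}{1+\rho(y,\theta)^s},
\]
and this supremum is finite by the estimate established just before the lemma (using that $f$ and $g$ are nonexpansive). Hence, for the particular point $x\in X$, the term indexed by $y=x$ is bounded by the supremum, i.e.
\[
  \frac{\rho(f(x),g(x))}{1+\rho(x,\theta)^s} \leq d_{\theta,s}(f,g).
\]
First I would note that $1+\rho(x,\theta)^s \geq 1 > 0$, so multiplying both sides of this inequality by $1+\rho(x,\theta)^s$ preserves the inequality and yields exactly $\rho(f(x),g(x)) \leq (1+\rho(x,\theta)^s)\, d_{\theta,s}(f,g)$, as claimed.

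There is essentially no obstacle here: the only thing one must make sure of is that $d_{\theta,s}(f,g)$ is a genuine (finite) real number rather than $+\infty$, which is precisely what the well-definedness computation preceding the lemma guarantees. This lemma plays the same role for the metrics $d_{\theta,s}$ as the first inequality of Lemma~\ref{lem:localAndGlobalDistance} plays for $d_{\theta,\phi}$, namely passing from a bound on the global distance to a pointwise bound, and it will be used in the subsequent porosity arguments in the weighted-uniform-convergence setting.
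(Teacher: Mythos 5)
Your proof is correct and is essentially identical to the paper's argument: both simply observe that the supremum defining $d_{\theta,s}(f,g)$ dominates the term at $y=x$ and then multiply through by the positive quantity $1+\rho(x,\theta)^s$.
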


\begin{proof}
  We have
  \begin{align*}
    \rho(f(x),g(x)) = \frac{\rho(f(x),g(x))}{1+\rho(x,\theta)^s} (1+ \rho(x,\theta)^s) \leq d_{\theta,s}(f,g) (1+ \rho(x,\theta)^s)
  \end{align*}
  as claimed.
\end{proof}

\begin{proposition}\label{prop:StrictContrDenseFords}
  The set of strict contractions is a dense subspace of $(\mathcal{M},d_{\theta,s})$. More precisely, given $f\in\mathcal{M}$ and $\gamma\in(0,1)$ the mapping
  \[
    f_\gamma(x) :=  (1-\gamma) f(x) \oplus \gamma f(\theta)
  \]
  satisfies $\Lip f_\gamma \leq 1 - \gamma$ and $d_{\theta,s}(f,f_\gamma) \leq \gamma$.
\end{proposition}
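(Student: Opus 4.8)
The plan is to follow the same two-step pattern as in the proof of Proposition~\ref{prop:StrContrDense}, simply replacing the summation estimate used there for $d_{\theta,\phi}$ by an elementary pointwise estimate suited to the weighted metric $d_{\theta,s}$. The Lipschitz bound is verbatim the same: for $x,y\in X$ the point $f_\gamma(x)=(1-\gamma)f(x)\oplus\gamma f(\theta)$ together with the hyperbolic inequality~\eqref{eq:HypIneq} gives
\[
  \rho(f_\gamma(x),f_\gamma(y)) = \rho\bigl((1-\gamma)f(x)\oplus\gamma f(\theta),(1-\gamma)f(y)\oplus\gamma f(\theta)\bigr) \leq (1-\gamma)\rho(f(x),f(y)) \leq (1-\gamma)\rho(x,y),
\]
so that $\Lip f_\gamma\leq 1-\gamma<1$; in particular $f_\gamma\in\mathcal{M}$ and $f_\gamma$ is a strict contraction.

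For the distance estimate I would first compute, using~\eqref{eq:HypLinComb} and the nonexpansiveness of $f$, that for every $x\in X$
\[
  \rho(f(x),f_\gamma(x)) = \rho\bigl(f(x),(1-\gamma)f(x)\oplus\gamma f(\theta)\bigr) = \gamma\rho(f(\theta),f(x)) \leq \gamma\rho(\theta,x).
\]
Dividing by $1+\rho(x,\theta)^s$ and passing to the supremum over $x\in X$, it then suffices to note that $t\leq 1+t^s$ for all $t\geq 0$ whenever $s\geq 1$ — indeed $t\leq 1$ when $t\leq 1$ and $t\leq t^s$ when $t\geq 1$ — so that
\[
  \frac{\rho(f(x),f_\gamma(x))}{1+\rho(x,\theta)^s} \leq \frac{\gamma\rho(x,\theta)}{1+\rho(x,\theta)^s} \leq \gamma
\]
for every $x\in X$, which gives $d_{\theta,s}(f,f_\gamma)\leq\gamma$.

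Density of the strict contractions in $(\mathcal{M},d_{\theta,s})$ then follows at once: given $f\in\mathcal{M}$ and $\varepsilon>0$, choosing any $\gamma\in(0,\min\{1,\varepsilon\})$ yields a strict contraction $f_\gamma$ with $d_{\theta,s}(f,f_\gamma)\leq\gamma<\varepsilon$. I do not expect any genuine obstacle here; the only point requiring a moment's care is the elementary inequality $t/(1+t^s)\leq 1$ for $s\geq 1$, which takes the place of the convergence estimate~(C4) used for $d_{\theta,\phi}$ and is the single spot where the hypothesis $s\geq 1$ enters.
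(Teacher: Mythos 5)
Your proof is correct and follows essentially the same route as the paper's: the Lipschitz bound via the hyperbolic inequality~\eqref{eq:HypIneq} (which the paper simply cites from Proposition~\ref{prop:StrContrDense}), the identity $\rho(f(x),f_\gamma(x))=\gamma\rho(f(x),f(\theta))\leq\gamma\rho(x,\theta)$ from~\eqref{eq:HypLinComb}, and then the elementary bound $\rho(x,\theta)/(1+\rho(x,\theta)^s)\leq 1$. The only difference is that you spell out the justification of that last inequality, which the paper leaves implicit.
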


\begin{proof}
  Since Proposition~\ref{prop:StrContrDense} already yields the bound on the Lipschitz constant, we only have to show that $d_{\theta,s}(f,f_\gamma) \leq \gamma$ which follows, in its turn, from
  \begin{align*}
    \frac{\rho(f(x),f_{\gamma}(x))}{1+\rho(x,\theta)^s} &= \frac{\rho(f(x), (1-\gamma) f(x) \oplus  \gamma f(\theta))}{1+\rho(\theta,x)^s} = \frac{\gamma \rho(f(x),f(\theta))}{1+\rho(x,\theta)^s} \leq \gamma \frac{\rho(x,\theta)}{1+\rho(x,\theta)^s} \leq \gamma.
  \end{align*}
\end{proof}

\begin{proposition}\label{prop:dsthetaTop}
  The metric $d_{\theta,s}$ generates the topology of uniform convergence on bounded sets if and only if $s>1$. For $s=1$ it generates a topology which is stronger than the one of uniform convergence.
\end{proposition}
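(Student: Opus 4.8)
The plan is to compare, on the space $\mathcal{M}$, the topology $\tau$ of uniform convergence on bounded sets (which is metrizable, for instance by one of the metrics $d_{\theta,\phi}$) with the topology $\tau_s$ induced by $d_{\theta,s}$; since both are metrizable it suffices to compare their convergent sequences. First I would dispose of the inclusion $\tau\subseteq\tau_s$, valid for every $s\ge 1$: by Lemma~\ref{lem:sLocalToGlobal} we have $\rho(f(x),g(x))\le(1+n^s)\,d_{\theta,s}(f,g)$ for every $x\in\bar{B}(\theta,n)$, hence $d_{\theta,n}(f,g)\le(1+n^s)\,d_{\theta,s}(f,g)$ for every $n\in\mathbb{N}$, so $d_{\theta,s}$-convergence forces uniform convergence on every ball. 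Thus for $s>1$ only the reverse implication remains, and for $s=1$ only the strictness of the inclusion.

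For $s>1$, suppose $f_k\to f$ uniformly on bounded sets and fix $\varepsilon>0$. Since $f_k(\theta)\to f(\theta)$, the number $M:=\sup_k\rho(f_k(\theta),f(\theta))$ is finite, and nonexpansiveness of the $f_k$ and of $f$ gives $\rho(f_k(x),f(x))\le 2\rho(x,\theta)+M$ for all $x\in X$ and all $k$, whence
\[
  \frac{\rho(f_k(x),f(x))}{1+\rho(x,\theta)^s}\le\frac{2\rho(x,\theta)+M}{1+\rho(x,\theta)^s}.
\]
Because $s>1$, the right-hand side tends to $0$ as $\rho(x,\theta)\to\infty$, so I can choose $R\in\mathbb{N}$ so that this quotient is smaller than $\varepsilon$ whenever $\rho(x,\theta)>R$; crucially this bound is uniform in $k$. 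On $\bar{B}(\theta,R)$ the quotient is at most $d_{\theta,R}(f_k,f)$, which is smaller than $\varepsilon$ for all sufficiently large $k$ by the assumed convergence. Taking the supremum over $x\in X$ yields $d_{\theta,s}(f_k,f)\le\varepsilon$ for large $k$; since $\varepsilon$ was arbitrary, $\tau_s\subseteq\tau$, and therefore $\tau_s=\tau$.

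For $s=1$ I would produce a sequence converging in $\tau$ but not in $\tau_1$. Since $X$ is unbounded, $\sup_{x\in X}\rho(x,\theta)=\infty$. For $k\in\mathbb{N}$ define $f_k\colon X\to X$ by $f_k(x)=\theta$ if $\rho(x,\theta)\le k$ and $f_k(x)=(1-s_x)\theta\oplus s_x x$ with $s_x:=1-\frac{k}{\rho(x,\theta)}$ otherwise, so that $\rho(\theta,f_k(x))=\max\{\rho(x,\theta)-k,0\}$. The crux is that $f_k\in\mathcal{M}$: if both $x,y$ lie in $\bar{B}(\theta,k)$ this is trivial; if exactly one does, say $\rho(x,\theta)\le k<\rho(y,\theta)$, then $\rho(f_k(x),f_k(y))=\rho(y,\theta)-k\le\rho(y,\theta)-\rho(x,\theta)\le\rho(x,y)$; and if $\rho(x,\theta),\rho(y,\theta)>k$ with, say, $\rho(x,\theta)\le\rho(y,\theta)$, then inserting the point $(1-s_x)\theta\oplus s_x y\in[\theta,y]$ and using \eqref{eq:HypIneq} together with \eqref{eq:HypLinComb2} gives
\[
  \rho(f_k(x),f_k(y))\le s_x\rho(x,y)+\bigl(\rho(y,\theta)-\rho(x,\theta)\bigr)\frac{k}{\rho(x,\theta)}\le\rho(x,y),
\]
the last inequality being $\rho(y,\theta)-\rho(x,\theta)\le\rho(x,y)$. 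Since $f_k$ coincides with the constant map $c_\theta\equiv\theta$ on $\bar{B}(\theta,k)$, we get $d_{\theta,n}(f_k,c_\theta)=0$ for $k\ge n$, so $f_k\to c_\theta$ in $\tau$; but
\[
  d_{\theta,1}(f_k,c_\theta)=\sup_{x\in X}\frac{\max\{\rho(x,\theta)-k,0\}}{1+\rho(x,\theta)}=1
\]
for every $k$, because $\frac{t-k}{1+t}\to1$ as $t\to\infty$ while $\rho(x,\theta)$ is unbounded. Hence $f_k\not\to c_\theta$ in $\tau_1$, so $\tau\subsetneq\tau_1$.

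The only genuinely delicate point in this plan is the nonexpansiveness of the mappings $f_k$ in the $s=1$ case; everything else is routine estimation with the two metrics, the one subtlety being that the tail estimate in the $s>1$ case must be made uniform in $k$, which is why the argument passes through the uniform bound $M$ on $\rho(f_k(\theta),f(\theta))$.
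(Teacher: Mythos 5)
Your proof is correct. For the inclusion $\tau\subseteq\tau_s$ (all $s\ge 1$) and for $\tau_s\subseteq\tau$ when $s>1$ you use essentially the same idea as the paper: the uniform-in-$k$ bound $\rho(f_k(x),f(x))\le 2\rho(x,\theta)+M$ combined with the decay of $t\mapsto\frac{2t+M}{1+t^s}$ for $s>1$, the only cosmetic difference being that you argue with a convergent sequence while the paper exhibits a $\tau$-neighbourhood of $f$ inside a $d_{\theta,s}$-ball. Both are legitimate since the topologies are metrizable.

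Where you genuinely deviate is the $s=1$ counterexample. The paper puts a small bump near a far-away point $x_n$ with $\rho(\theta,x_n)=2n$, building $f_n$ from a Lipschitz cutoff $\lambda_n$ supported in $B(x_n,n)$, so $f_n$ equals $\theta$ on $B(\theta,n)$ but $f_n(x_n)=\tfrac12\theta\oplus\tfrac12 x_n$. Your construction is instead a global radial shrinkage: $f_k$ collapses $\bar B(\theta,k)$ to $\theta$ and pulls every other point $k$ units towards $\theta$ along $[\theta,x]$, so that $\rho(\theta,f_k(x))=\max\{\rho(x,\theta)-k,0\}$. Both converge to the constant map uniformly on bounded sets while keeping $d_{\theta,1}$-distance bounded away from zero. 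Your approach has the small advantage that the $d_{\theta,1}$-distance is exactly $1$ for every $k$ and that the failure of convergence is visible in a single formula; its price is that you must verify nonexpansiveness of the shrinkage, which you do correctly by a case analysis, the key case being two points outside $\bar B(\theta,k)$ where inserting the auxiliary point $(1-s_x)\theta\oplus s_x y$ on $[\theta,y]$ and invoking~\eqref{eq:HypIneq} and~\eqref{eq:HypLinComb2} reduces everything to the triangle inequality $\rho(y,\theta)-\rho(x,\theta)\le\rho(x,y)$. The paper's bump avoids this delicate computation because nonexpansiveness there falls out of Lemma~\ref{lem:LipschitzSubset} applied to the cutoff $\lambda_n$, at the cost of a slightly more elaborate set-up. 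Either route is sound.
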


\begin{proof}
  Let $s\geq1$. Since the topology of uniform convergence on bounded sets is first-countable it is enough to work with sequences. First note that Lemma~\ref{lem:sLocalToGlobal} implies that every sequence of functions which converges for $d_{\theta,s}$ converges uniformly on bounded sets. For the converse implication in the case of $s>1$ let $f\in\mathcal{M}$ and $\varepsilon>0$ be given. We choose $n > (s-1)^{-1/s}$ large enough such that $\frac{2n}{1+n^s}<\frac{\varepsilon}{3}$. Note that the function $t\mapsto \frac{t}{1+t^s}$ is decreasing for $t>(s-1)^{-1/s}$ since its derivative satisfies
  \[
    \frac{1-(s-1)t^s}{(1+t^s)^2} < 0
  \]
  for these $t$. For all $g\in\mathcal{M}$ with $\rho(f(x),g(x)) < \frac{\varepsilon}{3}$ for $x\in B(\theta,n)$ we have
  \begin{align*}
    d_{\theta,s}(f,g) & \leq \sup_{x\in B(\theta,n)} \frac{\rho(f(x),g(x))}{1+\rho(x,\theta)^s} + \sup_{\rho(x,\theta) \geq n } \frac{\rho(f(x),g(x))}{1+\rho(x,\theta)^s}\\ & \leq \frac{\varepsilon}{3} + \sup_{\rho(x,\theta) \geq n } \frac{\rho(f(\theta),g(\theta))+2\rho(x,\theta)}{1+\rho(x,\theta)^s}  \leq \frac{2\varepsilon}{3} + \frac{2n}{1+n^s} < \varepsilon.
  \end{align*}
  In other words, we have exhibited a neighbourhood of $f$ for the topology of uniform convergence on bounded sets which is contained in the ball $B_{d_{\theta,s}}(f,\varepsilon)$.

  In the case of $s=1$, we construct a sequence of mappings which converges uniformly on bounded sets but not for the metric~$d_{\theta,s}$. We pick a sequence $(x_n)_{n\in\mathbb{N}}$ with $\rho(\theta,x_n)=2n$ and define
  \[
    \lambda_n(x) := \frac{1}{2} \max\left\{1-\frac{\rho(x,x_n)}{n},0\right\}\qquad\text{and}\qquad f_n(x) := (1-\lambda_n(x)) \theta \oplus \lambda_n(x) x_n.
  \]
  Note that
  \[
    |\lambda_n(x)-\lambda_n(y)| = \frac{1}{2n} |\rho(x,x_n)-\rho(y,x_n)| \leq \frac{1}{2n} \rho(x,y)
  \]
  for $x,y,\in B(x_n,n)$ and hence $\Lip \lambda_n \leq \frac{1}{2n}$ by Lemma~\ref{lem:LipschitzSubset} since $\lambda_n(x)=0$ outside of this ball. Hence by~\eqref{eq:HypLinComb2} we obtain
  \begin{align*}
    \rho(f_n(x),f_n(y)) & = \rho((1-\lambda_n(x)) \theta \oplus \lambda_n(x) x_n, (1-\lambda_n(y)) \theta \oplus \lambda_n(y) x_n)\\ &= |\lambda_n(x)-\lambda_n(y)| \rho(\theta,x_n) \leq \rho(x,y)
  \end{align*}
  which shows that $f_n$ is nonexpansive. Moreover, note that for $x\in B(\theta,n)$ we have
  \[
    \rho(x_n,x) \geq \rho(x_n,\theta)-\rho(x,\theta) \geq n
  \]
  and hence $\lambda_n(x)=0$. In other words $f_n(x)=\theta$ for all $x\in B(\theta,n)$. This shows that $f_n\to g$, where $g(x)=\theta$ for all $x\in X$, uniformly on bounded sets. On the other hand, we have
  \[
    f_n(x_n)=\frac{1}{2}\theta\oplus\frac{1}{2}x_n
  \]
  and hence
  \[
    \sup_{x\in X} \frac{\rho(f_n(x),g(x))}{1+\rho(x,\theta)} \geq \frac{\rho(f_n(x_n),g(x_n))}{1+\rho(x_n,\theta)} = \frac{n}{1+2n} \geq \frac{1}{3}
  \]
  which shows that the sequence does not converge to $g$ for the metric $d_{\theta,1}$.
\end{proof}

\section{Is the generic mapping a Rakotch contraction?}\label{sec:Rakotch}

The aim of this section is to investigate the size of the set of mappings which are Rakotch contractions or whose restrictions to suitable bounded sets are Rakotch contractions. We start by exhibiting a large set of mappings which map suitably large balls into themselves.

\begin{theorem}\label{thm:BallIntoBall}
  There exists a set $\mathcal{F}\subset\mathcal{M}$ such that $\mathcal{M}\setminus\mathcal{F}$ is $\sqrt{\phi}$-porous in $(\mathcal{M}, d_{\theta,\phi})$ and every $f\in\mathcal{F}$ has the following property:
  \begin{equation}\tag{P1}\label{eq:P1}
    \exists M_f > 0 \;\text{such that}\; f(\bar{B}(\theta,M_f)) \subset \bar{B}(\theta,M_f)
  \end{equation}
  If $\phi$ satisfies the condition
  \begin{equation}\label{eq:C5}\tag{C5}
    \phi^{-1}\left(\tfrac{t}{a+b}\right) \leq \phi^{-1}\left(\tfrac{t}{a}\right)\phi^{-1}\left(\tfrac{t}{b}\right)
  \end{equation}
  for $a,b\geq 1$ and $t\in(0,1)$, then $\mathcal{M}\setminus\mathcal{F}$ is $\phi$-porous in $(\mathcal{M}, d_{\theta,\phi})$.
\end{theorem}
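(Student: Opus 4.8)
The plan is to show that the set of mappings failing property~\eqref{eq:P1} can be perturbed away from, in the sense of Lemma~\ref{lem:PorousMichael}. The natural candidate for a perturbation which forces a ball to be mapped into itself is, as in Propositions~\ref{prop:StrContrDense} and~\ref{prop:StrictContrDenseFords}, a contraction towards the base point: given $f\in\mathcal{M}$ and $\gamma\in(0,1)$, set $f_\gamma(x) := (1-\gamma) f(x) \oplus \gamma f(\theta)$. By Proposition~\ref{prop:StrContrDense} we have $\Lip f_\gamma \leq 1-\gamma$ and $d_{\theta,\phi}(f,f_\gamma) \leq C_\phi\gamma$. For such a strict contraction with constant $L := 1-\gamma$, iterating the estimate $\rho(f_\gamma(x),f_\gamma(\theta)) \leq L\rho(x,\theta)$ together with $\rho(f_\gamma(\theta),\theta) = \rho(f(\theta),\theta) =: a$ shows $f_\gamma(\bar B(\theta,M)) \subset \bar B(\theta,M)$ as soon as $LM + a \leq M$, i.e.\ $M \geq a/\gamma$. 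So every strict contraction already has property~\eqref{eq:P1}, and in fact $f_\gamma$ does; this is the key structural observation.

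First I would fix an $f \in \mathcal{M}\setminus\mathcal{F}$ and an arbitrary small $r>0$, and choose $\gamma = \gamma(r)$ so that $d_{\theta,\phi}(f,f_\gamma)\leq C_\phi\gamma \leq r$, i.e.\ $\gamma := r/C_\phi$. Then $f_\gamma$ satisfies~\eqref{eq:P1} and lies within distance~$r$ of~$f$. To apply Lemma~\ref{lem:PorousMichael} with $\phi$ replaced by $\sqrt\phi$ (which, as noted after Remark~\ref{rem:PhiInvConv}, satisfies the hypotheses of that lemma), I must exhibit a radius $\delta$ with $\delta \geq \phi^{-1}(\alpha r)^{1/2} = (\sqrt\phi)^{-1}(\alpha r)$ for some fixed $\alpha\in(0,1)$ such that every $g$ with $d_{\theta,\phi}(f_\gamma,g) \leq \delta$ still satisfies~\eqref{eq:P1}. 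Here is where the robustness of~\eqref{eq:P1} under further small perturbation enters: if $g$ is $C_0$-close to $f_\gamma$ on the ball $\bar B(\theta,M)$, then $\rho(g(x),\theta) \leq \rho(g(x),f_\gamma(x)) + \rho(f_\gamma(x),\theta) \leq C_0 + LM + a$, and if $M$ was chosen with a little room to spare — say $LM + a \leq M - 1$, achievable by taking $M := (a+1)/\gamma$ — then $g(\bar B(\theta,M))\subset\bar B(\theta,M)$ provided $C_0 \leq 1$. By Lemma~\ref{lem:localAndGlobalDistance}, $d_{\theta,\phi}(f_\gamma,g) \leq \tfrac12\phi^{-1}(\tfrac1m)$ with $m := \lceil M\rceil$ forces $\rho(f_\gamma(z),g(z)) \leq 1$ on $\bar B(\theta,m) \supset \bar B(\theta,M)$. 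So it suffices to take $\delta := \tfrac12\phi^{-1}(\tfrac1m)$, with $m = \lceil (a+1)/\gamma\rceil = \lceil (a+1)C_\phi/r\rceil$.

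The crux is therefore the quantitative comparison $\tfrac12\phi^{-1}\!\big(\tfrac{1}{\lceil (a+1)C_\phi/r\rceil}\big) \geq (\sqrt\phi)^{-1}(\alpha r) = \phi^{-1}(\alpha r)^{1/2}$ for small $r$ and a fixed $\alpha$ depending only on $\phi$ and~$a$. I expect this to be the main obstacle, and it is exactly the reason condition~(C3) and the stronger multiplicative condition~\eqref{eq:C5} appear in the statement. Writing $m \approx K/r$ with $K := (a+1)C_\phi$, the left side is roughly $\phi^{-1}(r/K)$ up to the ceiling, and (C3) lets one absorb the integer part at the cost of a constant factor, reducing matters to comparing $\phi^{-1}(r/K)$ with $\phi^{-1}(\alpha r)^{1/2}$. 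For the $\sqrt\phi$-porosity claim this can be handled for $r$ small using only concavity/convexity inequalities from Remark~\ref{rem:PhiInvConv} (the convex inverse $\phi^{-1}$ decays at least linearly, so $\phi^{-1}(r/K)^2$ is dominated by $\phi^{-1}$ of a comparable argument), whereas under~\eqref{eq:C5}, applying it with $a = b$ gives $\phi^{-1}(t/(2a)) \leq \phi^{-1}(t/a)^2$, which upgrades the square-root loss to no loss and yields genuine $\phi$-porosity; one iterates~\eqref{eq:C5} finitely many times to pass from the factor $K$ to a power of~$2$. Once the radius comparison is in hand, Lemma~\ref{lem:PorousMichael} applies at every point $f$ of $\mathcal{M}\setminus\mathcal{F}$ with uniform $r_0$ and $\alpha$, giving the asserted porosity; the set $\mathcal{F}$ is then simply defined as the set of all $f\in\mathcal{M}$ with property~\eqref{eq:P1}, and what we have shown is precisely that its complement is $\sqrt\phi$-porous (respectively $\phi$-porous under~\eqref{eq:C5}).
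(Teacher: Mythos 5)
Your plan matches the paper's proof in all essentials: take $\mathcal{F}$ to be the mappings with property~\eqref{eq:P1}, perturb $f$ to $f_\gamma := (1-\gamma)f \oplus \gamma f(\theta)$ with $\gamma$ proportional to $r$, observe $f_\gamma$ maps $\bar B(\theta, M_f)$ into itself with slack (the paper uses $M_f = (1+\rho(f(\theta),\theta))/\gamma$ exactly as you do), use Lemma~\ref{lem:localAndGlobalDistance} to transfer a small $d_{\theta,\phi}$-ball around $f_\gamma$ into $\mathcal{F}$, and then verify the radius comparison via convexity of $\phi^{-1}$ (Remark~\ref{rem:PhiInvConv}) in the general case and via~\eqref{eq:C5} for the $\phi$-porosity upgrade. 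The paper also chooses $\gamma$ slightly smaller ($r/(3C_\phi)$) and uses the sharper ``$m\geq 1/\phi(\eta_\phi)$'' clause of Lemma~\ref{lem:localAndGlobalDistance}, but those are cosmetic.

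There is one genuine error worth flagging: you write $(\sqrt\phi)^{-1}(\alpha r) = \phi^{-1}(\alpha r)^{1/2}$, but the correct identity is $(\sqrt\phi)^{-1}(s) = \phi^{-1}(s^2)$. These are not interchangeable. For $\phi(t)=t^{1/4}$, say, one has $\phi^{-1}(s^2)=s^8$ while $\phi^{-1}(s)^{1/2}=s^2$, and with your (larger, incorrect) target the required inequality
\[
\tfrac12\phi^{-1}\!\left(\tfrac{1}{\lceil K/r\rceil}\right) \geq \phi^{-1}(\alpha r)^{1/2}
\]
becomes $\frac12(r/\lceil K/r\rceil)^4 \gtrsim r^4 / K^4 \geq (\alpha r)^2$, which fails as $r\to 0$. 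With the correct target $\phi^{-1}((\alpha r)^2)$ the argument does go through exactly along the lines you indicate: $\lceil K/r\rceil \leq 2K/r$, convexity gives $\phi^{-1}(t/2)\leq\frac12\phi^{-1}(t)$, and $\phi^{-1}((\alpha r)^2)=\phi^{-1}\big(r\cdot\alpha^2 r\big)\leq r\,\phi^{-1}(\alpha^2 r)$ for $\alpha^2 r\leq\phi(\eta_\phi)$, reducing everything to a choice of $\alpha$ and $r_0$ depending only on $C_\phi$ and $\rho(f(\theta),\theta)$. So fix the inverse formula and carry the resulting factor-of-$r$ through Remark~\ref{rem:PhiInvConv} as the paper does; with that correction your proof is sound.

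Your sketch of the~\eqref{eq:C5} case is along the right lines but a bit vague; the paper's route is cleaner: rather than iterating~\eqref{eq:C5} to reach a power of two, apply it once with $a=1$ and $b\approx K/r$ and then use $\phi^{-1}(r)\leq r$ from~\eqref{eq:PhiInvSmT} to extract the needed factor of $r$ directly, after which the computation is identical to the $\sqrt\phi$ case.
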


\begin{remark}
  An example of a~$\phi$ satisfying condition~\eqref{eq:C5} is the function
  \[
    \phi\colon (0,1)\to (0,\infty), \qquad t \mapsto -\frac{1}{\log_2 t},
  \]
  considered in Example~\ref{ex:Exp} because
  \[
    \phi^{-1}\left(\tfrac{t}{a+b}\right) = 2^{-\frac{a+b}{t}} = 2^{-a/t}2^{-b/t} = \phi^{-1}\left(\tfrac{t}{a}\right) \phi^{-1}\left(\tfrac{t}{b}\right).
  \]
  In this case~\eqref{eq:C5} is satisfied even as an equality.
\end{remark}

We also have a similar result for the metric $d_{\theta,s}$.

\begin{theorem}\label{thm:BallIntoBall2}
  There exists a set $\mathcal{F}\subset\mathcal{M}$ such that $\mathcal{M}\setminus\mathcal{F}$ is $\psi_s$-porous in $(\mathcal{M}, d_{\theta,s})$ and every $f\in\mathcal{F}$ has property~\eqref{eq:P1}.
\end{theorem}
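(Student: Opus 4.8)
The plan is to take $\mathcal{F}$ to be the set of all $f\in\mathcal{M}$ having property~\eqref{eq:P1}, so that $\mathcal{M}\setminus\mathcal{F}$ consists exactly of those nonexpansive self-mappings which leave no closed ball centred at $\theta$ invariant. Since $\psi_s$ is strictly increasing, concave and vanishes at $0$, and $\psi_s^{-1}(u)=u^s$, the characterisation of porosity in Lemma~\ref{lem:PorousMichael} reduces the claim to the following: for every $f\in\mathcal{M}\setminus\mathcal{F}$ there are $r_0>0$ and $\alpha\in(0,1)$ (both depending on $f$) such that for each $r\in(0,r_0)$ one can find $g\in\mathcal{M}$ with $0<d_{\theta,s}(f,g)\le r$ and $B_{d_{\theta,s}}(g,\psi_s^{-1}(\alpha r))\subset\mathcal{F}$. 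One may use that $(\mathcal{M},d_{\theta,s})$ has no isolated points (for non-constant $f$ the contractions $f_\gamma$ of Proposition~\ref{prop:StrictContrDenseFords} approximate $f$ without equalling it, and constant mappings are non-isolated via a small bump as in the proof of Proposition~\ref{prop:dsthetaTop}), so that Lemma~\ref{lem:PorousMichael} does apply.

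Now fix $f\in\mathcal{M}\setminus\mathcal{F}$, write $D:=\rho(f(\theta),\theta)$, and let $r\in(0,1)$. Put $\gamma:=r$ and $g:=f_\gamma$, where $f_\gamma(x)=(1-\gamma)f(x)\oplus\gamma f(\theta)$ as in Proposition~\ref{prop:StrictContrDenseFords}; then $\Lip g\le 1-\gamma$, $g(\theta)=f(\theta)$ and $d_{\theta,s}(f,g)\le\gamma=r$. Moreover $g\ne f$: indeed $g$ is a strict contraction, whereas $f$ is not, since any $h$ with $\Lip h=L<1$ satisfies $\rho(h(x),\theta)\le L\rho(x,\theta)+\rho(h(\theta),\theta)\le M$ on $\bar{B}(\theta,M)$ as soon as $M\ge\rho(h(\theta),\theta)/(1-L)$, i.e. $h$ would have property~\eqref{eq:P1}. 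Hence $d_{\theta,s}(f,g)>0$. Finally set $M:=(2D+1)/\gamma=(2D+1)/r$. Using $\Lip g\le 1-\gamma$ and the triangle inequality, for $x\in\bar{B}(\theta,M)$ we get
\[
  \rho(g(x),\theta)\le(1-\gamma)M+D=M-(\gamma M-D)=M-(D+1),
\]
so $g$ maps $\bar{B}(\theta,M)$ into $\bar{B}(\theta,M)$ with a uniform slack of $D+1$.

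The last step transfers this slack into a genuine $d_{\theta,s}$-ball inside $\mathcal{F}$ by means of Lemma~\ref{lem:sLocalToGlobal}. If $h\in\mathcal{M}$ satisfies $d_{\theta,s}(h,g)\le\rho_0$, then $\rho(h(x),g(x))\le(1+\rho(x,\theta)^s)\rho_0\le(1+M^s)\rho_0$ for all $x\in\bar{B}(\theta,M)$, hence
\[
  \rho(h(x),\theta)\le M-(D+1)+(1+M^s)\rho_0\le M
\]
whenever $\rho_0\le(D+1)/(1+M^s)$; in that case $h$ has property~\eqref{eq:P1} with $M_h=M$, i.e. $h\in\mathcal{F}$. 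Since $1+M^s=r^{-s}\bigl(r^s+(2D+1)^s\bigr)\le r^{-s}\bigl(1+(2D+1)^s\bigr)$ for $r\le 1$, the choice $\rho_0:=c_f\,r^s$ with $c_f:=(D+1)/\bigl(1+(2D+1)^s\bigr)$ satisfies this bound. One checks $c_f\in(0,1)$ using $(2D+1)^s\ge 2D+1\ge D+1$, so putting $\alpha:=c_f^{1/s}\in(0,1)$ and $r_0:=1$ gives $\psi_s^{-1}(\alpha r)=(\alpha r)^s=c_f r^s=\rho_0$, whence $B_{d_{\theta,s}}(g,\psi_s^{-1}(\alpha r))\subset\mathcal{F}$ and the proof is complete.

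The only genuinely delicate point, and the one where the exponent $s$ enters, is the balance carried out in the last paragraph: forcing the perturbation $g$ to stay within $r$ of $f$ makes the radius $M$ of the invariant ball grow like $1/r$, which blows up the transfer factor $1+M^s$ to order $r^{-s}$ in Lemma~\ref{lem:sLocalToGlobal}; this is precisely compensated by the fact that the target radius $\psi_s^{-1}(\alpha r)$ is of order $r^s$, so one obtains $\psi_s$-porosity rather than a weaker conclusion. Everything else is a routine combination of the triangle inequality, the estimate for $f_\gamma$ from Proposition~\ref{prop:StrictContrDenseFords}, and the pointwise bound of Lemma~\ref{lem:sLocalToGlobal}.
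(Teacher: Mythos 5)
Your proof is correct and takes essentially the same route as the paper: approximate $f$ by the strict contraction $f_\gamma$, observe that it maps a ball of radius $O(1/r)$ around $\theta$ into itself with a uniform slack, and convert that slack via Lemma~\ref{lem:sLocalToGlobal} into a $d_{\theta,s}$-ball of radius of order $r^s=\psi_s^{-1}(\Theta(r))$ around $f_\gamma$ that avoids $\mathcal{M}\setminus\mathcal{F}$. The only differences from the paper are cosmetic parameter choices (the paper uses $\gamma=r/3$, invariant radius $M_f=(1+\rho(f(\theta),\theta))/\gamma$, and an $s$-independent constant $\alpha_s=\tfrac{1}{6(2+\rho(f(\theta),\theta))}$).
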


The proofs of these theorems are given in the next section.

\begin{remark}\label{rem:Balls}
  Property~\eqref{eq:P1} not only implies the existence of a ball which is mapped into itself but the seemingly stronger statement that all large enough balls around $\theta$ are mapped into themselves. In order to see this, let $f\in\mathcal{M}$ be a nonexpansive mapping satisfying (P1) with some $M_f>0$. Now for every $M>M_f$ and every $x\in \bar{B}(\theta,M)$ with $\rho(\theta,x)\geq M_f$, we may pick a point $z\in [\theta,x]$ with $\rho(z,\theta)=M_f$ and $\rho(x,z)\leq M-M_f$. Using the triangle inequality and the fact that $f$ satisfies~(P1) with $M_f$ we observe that
  \[
    \rho(f(x),\theta) \leq \rho(f(x),f(z)) + \rho(f(z),\theta) \leq M - M_f + M_f = M.
  \]
  If $\rho(x,\theta) \leq M_f$, property~\eqref{eq:P1} directly implies that $f(x)\in \bar{B}(\theta,M)$. Hence the ball $\bar{B}(\theta,M)$ is mapped into itself by $f$.
\end{remark}

These theorems show that in both cases for the typical nonexpansive mapping, all large enough balls around~$\theta$ are mapped into themselves.

We now show that, in addition, the typical nonexpansive self-mapping is a Rakotch contraction on every bounded subset. More precisely, we show that the typical mapping has the following property:
\begin{itemize}
\item[(R)\namedlabel{it:RB}{R}] For every bounded $B\subset X$ there is a decreasing  function $\varphi_{f,B}\colon (0,\infty)\to (0,1)$ such that
\[
  \rho(f(x),f(y)) \leq \varphi_{f,B}(\rho(x,y)) \rho(x,y)
\]
for all $x,y\in X$ with $x\neq y$.
\end{itemize}
To this end, we set
\[
  \mathcal{G}_n := \left\{f \in \mathcal{M} \colon \sup \left\{\frac{\rho(f(x),f(y))}{\rho(x,y)} \colon x,y \in \bar{B}(\theta,n) \;\text{and}\; \rho(x,y) \geq \frac{1}{n}\right\}<1\right\}
\]
for $n\in\mathbb{N}$.

\begin{theorem}\label{thm:RakotchOnBoundedSubsetsIsSigmaPorous}
  For each $n\in\mathbb{N}$ the set $\mathcal{M}\setminus\mathcal{G}_n$ is porous in $(\mathcal{M}, d_{\theta,\phi})$.
\end{theorem}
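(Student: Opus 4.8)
The plan is to verify porosity of $\mathcal{M}\setminus\mathcal{G}_n$ at an arbitrary $f\in\mathcal{M}\setminus\mathcal{G}_n$ using Lemma~\ref{lem:PorousMichael} with $\phi(t)=t$, which here is just the classical definition: I must find $r_0>0$ and $\alpha\in(0,1)$ so that for every small $r>0$ there is $g\in\mathcal{M}$ with $d_{\theta,\phi}(f,g)\le r$ and $B_{d_{\theta,\phi}}(g,\alpha r)\cap\mathcal{G}_n=\emptyset$. The natural candidate for $g$ is a Rakotch-type (in fact strict) contraction built from $f$, namely a small convex-combination perturbation toward $f(\theta)$: set $g:=f_\gamma$ with $f_\gamma(x)=(1-\gamma)f(x)\oplus\gamma f(\theta)$ for an appropriate $\gamma=\gamma(r)$. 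By Proposition~\ref{prop:StrContrDense} this satisfies $\Lip f_\gamma\le 1-\gamma$ and $d_{\theta,\phi}(f,f_\gamma)\le C_\phi\gamma$, so choosing $\gamma$ proportional to $r$ (say $\gamma=r/(2C_\phi)$) puts $g$ within distance $r/2$ of $f$.

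The second step is to show that a whole $d_{\theta,\phi}$-ball around $g=f_\gamma$ avoids $\mathcal{G}_n$. The idea is that $g$ is not merely nonexpansive but has a \emph{uniform} contraction factor $1-\gamma$ on all of $X$; any $h$ that is $d_{\theta,\phi}$-close to $g$ must be close to $g$ uniformly on $\bar B(\theta,n)$ by Lemma~\ref{lem:localAndGlobalDistance}, hence on $\bar B(\theta,n)$ the map $h$ still contracts pairs of points at distance $\ge 1/n$ — because $\rho(h(x),h(y))\le \rho(h(x),g(x))+\rho(g(x),g(y))+\rho(g(y),h(y))\le 2\delta+(1-\gamma)\rho(x,y)$, and for $\rho(x,y)\ge 1/n$ this is $<\rho(x,y)$ as soon as $2\delta<\gamma/n$, i.e. $\delta<\gamma/(2n)$. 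Thus if $h\in B_{d_{\theta,\phi}}(g,\alpha r)$ then Lemma~\ref{lem:localAndGlobalDistance} gives $\rho(h(z),g(z))\le \delta$ for $z\in\bar B(\theta,n)$ provided $\alpha r\le \tfrac{\delta}{2}\phi^{-1}(\tfrac1n)$; combining, I need $\alpha r$ small enough that the induced $\delta$ is below $\gamma/(2n)=r/(4nC_\phi)$. Concretely, $\alpha r\le \tfrac14\cdot\tfrac{r}{4nC_\phi}\cdot\phi^{-1}(\tfrac1n)$ works, i.e. I take $\alpha:=\tfrac{1}{16nC_\phi}\phi^{-1}(\tfrac1n)$, which is a positive constant depending only on $n$ and $\phi$ (not on $r$), and $r_0$ can be taken so small that $d_{\theta,\phi}(f,g)\le r/2$ also implies $h$ stays in the regime where Lemma~\ref{lem:localAndGlobalDistance} applies. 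Then every such $h$ has $\sup\{\rho(h(x),h(y))/\rho(x,y): x,y\in\bar B(\theta,n),\ \rho(x,y)\ge 1/n\}\le 1-\gamma/2<1$, so $h\notin\mathcal{M}\setminus\mathcal{G}_n$ forces $h\in\mathcal{G}_n$; wait — I actually want $h\in\mathcal{G}_n$, so the ball lies \emph{inside} $\mathcal{G}_n$, hence disjoint from $\mathcal{M}\setminus\mathcal{G}_n$, exactly as required.

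The one point needing care — and the main (minor) obstacle — is bookkeeping the two smallness thresholds simultaneously: the constant $\alpha$ must be chosen once and for all (independent of $r$), while for each $r$ the perturbation size $\gamma$ scales linearly with $r$, and I must ensure $\phi^{-1}(\tfrac1n)$ and the hypotheses ``$d_{\theta,\phi}(f,h)\le\tfrac12\phi^{-1}(\tfrac1m)$'' of Lemma~\ref{lem:localAndGlobalDistance} (with $m=n$) are met for all $r<r_0$; this forces $r_0$ to depend on $n$ and $\phi$ but that is harmless. Everything else is the routine triangle-inequality estimate above together with the two quoted lemmas, so no deeper structural input (e.g. the hyperbolic inequality beyond what Proposition~\ref{prop:StrContrDense} already uses) is needed.
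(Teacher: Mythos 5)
Your proposal is correct and takes essentially the same route as the paper: you perturb $f$ to $f_\gamma(x)=(1-\gamma)f(x)\oplus\gamma f(\theta)$ using Proposition~\ref{prop:StrContrDense}, then use Lemma~\ref{lem:localAndGlobalDistance} and the triangle inequality to show the entire ball $B_{d_{\theta,\phi}}(f_\gamma,\alpha r)$ lies inside $\mathcal{G}_n$, with $\alpha$ a constant multiple of $\phi^{-1}(1/n)/(nC_\phi)$ depending only on $n$ and $\phi$. The only differences from the paper's proof are cosmetic: the paper picks $\gamma\in(r/(2C_\phi),r/C_\phi)$ rather than fixing $\gamma=r/(2C_\phi)$, uses $\alpha=\phi^{-1}(1/n)/(8nC_\phi)$ rather than your slightly smaller choice, and verifies the classical definition of porosity directly instead of citing Lemma~\ref{lem:PorousMichael} with $\phi(t)=t$.
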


\begin{proof}
  Let $f\in\mathcal{M}$, set
  \[
    r_0:=1, \qquad  \alpha := \frac{\phi^{-1}\left(\tfrac{1}{n}\right)}{8nC_{\phi}},
  \]
  let $r \in (0,r_0)$ and pick a $\gamma\in(\tfrac{r}{2C_{\phi}},\tfrac{r}{C_{\phi}})$. We define
  \[
    f_\gamma(x) :=  (1-\gamma)f(x) \oplus \gamma f(\theta).
  \]
  By Proposition~\ref{prop:StrContrDense} $f_\gamma$ is $(1-\gamma)$-Lipschitz and satisfies $d_{\theta,\phi}(f,f_\gamma)\leq C_{\phi} \gamma < r$.

  We now show that every $g\in\mathcal{M}$ with $d_{\theta,\phi} (g,f_\gamma) \leq \alpha r$ is an element of $\mathcal{G}_n$. Since $r<1$, we may use the triangle inequality together with Lemma~\ref{lem:localAndGlobalDistance} to obtain for $x,y\in \bar{B}(\theta, n)$
  \begin{align*}
    \rho(g(x),g(y)) &\leq \rho(g(x), f_\gamma(x)) + \rho(f_\gamma(x), f_\gamma(y)) + \rho( f_\gamma(y), g(y))\\
                  &\leq \frac{2\alpha r}{\phi^{-1}\left(\tfrac{1}{n}\right)} + (1-\gamma)\rho(x,y) + \frac{2\alpha r}{\phi^{-1}\left(\tfrac{1}{n}\right)} \leq \frac{r}{2nC_\phi} + (1-\gamma)\rho(x,y),
  \end{align*}
  where in the last step we used the definition of $\alpha$.

  Hence we conclude that for $x,y\in \bar{B}(\theta,n)$ with $\rho(x,y)\geq \frac{1}{n}$, we have
  \begin{align*}
      \frac{\rho(g(x), g(y))}{\rho(x,y)} &\leq \frac{r}{2nC_\phi} \frac{1}{\rho(x,y)} + (1-\gamma) \leq \frac{r}{2C_\phi}  + 1-\gamma < 1,
  \end{align*}
  where the last inequality follows from the definition of $\gamma$.
\end{proof}

\begin{theorem}\label{thm:RakotchOnBoundedSubsetsIsSigmaPorous2}
  For each $n\in\mathbb{N}$ the set $\mathcal{M}\setminus\mathcal{G}_n$ is porous in  $(\mathcal{M}, d_{\theta,s})$.
\end{theorem}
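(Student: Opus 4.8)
The plan is to mimic the proof of Theorem~\ref{thm:RakotchOnBoundedSubsetsIsSigmaPorous}, replacing the use of Lemma~\ref{lem:localAndGlobalDistance} by Lemma~\ref{lem:sLocalToGlobal} and Proposition~\ref{prop:StrContrDense} by Proposition~\ref{prop:StrictContrDenseFords}. Concretely, fix $f\in\mathcal{M}$ and set $r_0:=1$ together with a constant $\alpha$ to be chosen below depending only on $n$ and $s$. Given $r\in(0,r_0)$, pick $\gamma\in(\tfrac{r}{2},r)$ and define $f_\gamma(x):=(1-\gamma)f(x)\oplus\gamma f(\theta)$. By Proposition~\ref{prop:StrictContrDenseFords}, $f_\gamma$ is $(1-\gamma)$-Lipschitz and $d_{\theta,s}(f,f_\gamma)\leq\gamma<r$, so $f_\gamma$ plays the role of the nearby centre.

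Next I would show that every $g\in\mathcal{M}$ with $d_{\theta,s}(g,f_\gamma)\leq\alpha r$ lies in $\mathcal{G}_n$. For $x,y\in\bar{B}(\theta,n)$ we have $\rho(x,\theta)^s\leq n^s$, so Lemma~\ref{lem:sLocalToGlobal} gives $\rho(g(x),f_\gamma(x))\leq(1+n^s)\alpha r$ and likewise for $y$. Combining this with the triangle inequality and the $(1-\gamma)$-Lipschitz bound on $f_\gamma$, we obtain
\[
  \rho(g(x),g(y))\leq 2(1+n^s)\alpha r+(1-\gamma)\rho(x,y).
\]
Now choose $\alpha:=\tfrac{1}{4n(1+n^s)}$; then the additive term is at most $\tfrac{r}{2n}$, and hence for $x,y\in\bar{B}(\theta,n)$ with $\rho(x,y)\geq\tfrac1n$ we get
\[
  \frac{\rho(g(x),g(y))}{\rho(x,y)}\leq\frac{r}{2n}\cdot n+(1-\gamma)=\frac r2+1-\gamma<1,
\]
the last inequality because $\gamma>\tfrac r2$. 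Thus the supremum defining $\mathcal{G}_n$ is strictly less than one, so $g\in\mathcal{G}_n$.

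This exhibits, for each $r\in(0,r_0)$, a point $f_\gamma$ with $d_{\theta,s}(f,f_\gamma)<r$ and $B_{d_{\theta,s}}(f_\gamma,\alpha r)\cap(\mathcal{M}\setminus\mathcal{G}_n)=\emptyset$, which is exactly the classical porosity condition (the case $\phi(t)=t$ of Lemma~\ref{lem:PorousMichael}) at the arbitrary point $f$; hence $\mathcal{M}\setminus\mathcal{G}_n$ is porous in $(\mathcal{M},d_{\theta,s})$. There is no real obstacle here; the only point to watch is that $\alpha$ must be chosen independently of $r$ (which it is, depending only on $n$ and $s$) and that one should take a strict inequality $d_{\theta,s}(g,f_\gamma)\le\alpha r$ with $\alpha$ small enough that the open ball $B_{d_{\theta,s}}(f_\gamma,\alpha r)$ works — replacing $\alpha$ by $\alpha/2$ if one wants the open-ball formulation is harmless.
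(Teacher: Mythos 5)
Your proposal is correct and takes essentially the same approach as the paper's own proof, with the same choices $r_0=1$, $\alpha=\tfrac{1}{4n(1+n^s)}$, $\gamma\in(\tfrac r2,r)$ and the same use of Proposition~\ref{prop:StrictContrDenseFords} and Lemma~\ref{lem:sLocalToGlobal}. The only cosmetic difference is that you bound $1+\rho(x,\theta)^s\le 1+n^s$ before applying the triangle inequality, whereas the paper does it immediately afterwards.
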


\begin{proof}
  Let $f\in\mathcal{M}$, set
  \[
    r_0:=1,\qquad \alpha:= \frac{1}{4n(1+n^s)},
  \]
  let $r \in (0,r_0)$ and pick a $\gamma\in(\frac{r}{2},r)$. We define
  \[
    f_\gamma(x) := (1-\gamma)f(x)  \oplus \gamma f(\theta).
  \]
  By Proposition~\ref{prop:StrictContrDenseFords} $f_\gamma$ is $(1-\gamma)$-Lipschitz and satisfies $d_{\theta,s}(f,f_\gamma)\leq \gamma < r$.

  We now show that every $g\in\mathcal{M}$ with $d_{\theta,s} (g,f_\gamma) \leq \alpha r$ is an element of $\mathcal{G}_n$. Since $r<1$, we may use the triangle inequality together with Lemma~\ref{lem:sLocalToGlobal} to obtain for $x,y\in \bar{B}(\theta, n)$ that
  \begin{align*}
    \rho(g(x),g(y)) &\leq \rho(g(x), f_\gamma(x)) + \rho(f_\gamma(x), f_\gamma(y)) + \rho( f_\gamma(y), g(y))\\ & \leq \alpha r (1+\rho(x,\theta)^s) + (1-\gamma)\rho(x,y)  + \alpha r (1+\rho(y,\theta)^s)  \leq \frac{r}{2n} + (1-\gamma)\rho(x,y).
  \end{align*}
  Hence we conclude that for $x,y\in \bar{B}(\theta,n)$ with $\rho(x,y)\geq \frac{1}{n}$, we have
  \begin{align*}
      \frac{\rho(g(x), g(y))}{\rho(x,y)} &\leq \frac{r}{2n} \frac{1}{\rho(x,y)} + (1-\gamma) \leq \frac{r}{2}  + 1-\gamma < 1,
  \end{align*}
  where the last inequality follows from the definition of $\gamma$.
\end{proof}

Given a mapping $f\in \bigcap_{n\in\mathbb{N}}\mathcal{G}_n$, we set
\[
  c_{f,n} := \sup \left\{\frac{\rho(f(x),f(y))}{\rho(x,y)}\colon x,y\in \bar{B}(\theta,n) \;\text{and}\; \rho(x,y)\geq \frac{1}{n}\right\}
\]
and observe that by construction and by the definition of the sets~$\mathcal{G}_n$ the sequence $(c_{f,n})_{n\in\mathbb{N}}$ is an increasing sequence in $[0,1)$. For $M>0$ we define the function
\[
  \varphi_{f,M} \colon [0,\infty) \to [0,1], \qquad t \mapsto \sum_{n=M+1}^{\infty} c_{f,n} \chi_{\left[\frac{1}{n},\frac{1}{n-1}\right)}(t) + c_{M} \chi_{\left[\frac{1}{M},\infty\right)}(t)
\]
and note that it is well defined since for every $t$ only one summand is nonzero. Moreover note that it is a decreasing function satisfying $\varphi_{f,M}(t) < 1$ for $t>0$ and $\rho(f(x), f(y)) \leq \varphi_{f,M}(\rho(x,y)) \rho(x,y)$ for all $x,y\in \bar{B}(\theta,M)$. This shows that $f$ satisfies property~(R).

\begin{corollary}
  The typical mapping in $(\mathcal{M}, d_{\theta,\phi})$ and $(\mathcal{M}, d_{\theta,s})$ is Rakotch contractive on bounded sets, that is, the complement of the set of mappings $f$ with property~(\ref{it:RB}) is $\sigma$-porous.
\end{corollary}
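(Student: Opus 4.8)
The plan is to deduce the corollary directly from Theorems~\ref{thm:RakotchOnBoundedSubsetsIsSigmaPorous} and~\ref{thm:RakotchOnBoundedSubsetsIsSigmaPorous2} together with the paragraph preceding the statement, which does all the real work. First I would record the sufficiency already established there: every $f\in\bigcap_{n\in\mathbb{N}}\mathcal{G}_n$ satisfies property~(\ref{it:RB}). Indeed, given a bounded $B\subset X$, one picks $M\in\mathbb{N}$ with $B\subset\bar{B}(\theta,M)$, observes that $(c_{f,n})_{n\in\mathbb{N}}$ is an increasing sequence in $[0,1)$, and checks that the function $\varphi_{f,M}$ defined there is decreasing, takes values in $[0,1)$ on $(0,\infty)$, and satisfies $\rho(f(x),f(y))\leq\varphi_{f,M}(\rho(x,y))\rho(x,y)$ for all distinct $x,y\in B$. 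Consequently the set of mappings in $\mathcal{M}$ failing~(\ref{it:RB}) is contained in $\mathcal{M}\setminus\bigcap_{n\in\mathbb{N}}\mathcal{G}_n=\bigcup_{n\in\mathbb{N}}(\mathcal{M}\setminus\mathcal{G}_n)$.

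Next I would invoke the porosity statements. By Theorem~\ref{thm:RakotchOnBoundedSubsetsIsSigmaPorous} each set $\mathcal{M}\setminus\mathcal{G}_n$ is porous in $(\mathcal{M},d_{\theta,\phi})$, and by Theorem~\ref{thm:RakotchOnBoundedSubsetsIsSigmaPorous2} it is porous in $(\mathcal{M},d_{\theta,s})$. Hence, in either of the two metric spaces, $\bigcup_{n\in\mathbb{N}}(\mathcal{M}\setminus\mathcal{G}_n)$ is $\sigma$-porous by the very definition of $\sigma$-porosity as a countable union of porous sets.

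Finally I would note that $\sigma$-porosity passes to subsets. If $A\subset B$ and $B$ is porous at every point of $B$, then every open ball contained in the complement of $B$ is also contained in the complement of $A$, so $\gamma(x,r,A)\geq\gamma(x,r,B)$ for all $x\in A$ and $r>0$; since every point of $A$ lies in $B$, this shows $A$ is porous, and the corresponding statement for $\sigma$-porous sets follows by applying this to each piece of a countable decomposition. Applying this with $A$ the set of $f\in\mathcal{M}$ not satisfying~(\ref{it:RB}) and $B=\bigcup_{n\in\mathbb{N}}(\mathcal{M}\setminus\mathcal{G}_n)$ yields that $A$ is $\sigma$-porous in both $(\mathcal{M},d_{\theta,\phi})$ and $(\mathcal{M},d_{\theta,s})$, which is exactly the claim. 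There is no genuine obstacle here: the corollary is the bookkeeping step that combines the per-$n$ porosity results with the observation that membership in $\bigcap_{n}\mathcal{G}_n$ already forces~(\ref{it:RB}), so the only points worth spelling out are the inclusion $\{f:\text{(\ref{it:RB}) fails}\}\subset\bigcup_n(\mathcal{M}\setminus\mathcal{G}_n)$ and the stability of $\sigma$-porosity under passing to subsets.
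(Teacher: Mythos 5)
Your proof is correct and follows the same route as the paper: reduce the failure of property~(\ref{it:RB}) to the union $\bigcup_{n\in\mathbb{N}}(\mathcal{M}\setminus\mathcal{G}_n)$ and invoke Theorems~\ref{thm:RakotchOnBoundedSubsetsIsSigmaPorous} and~\ref{thm:RakotchOnBoundedSubsetsIsSigmaPorous2}. The only stylistic difference is that the paper asserts the set of mappings failing~(\ref{it:RB}) is \emph{equal} to $\mathcal{M}\setminus\bigcap_{n}\mathcal{G}_n$, while you establish only the inclusion and then pass $\sigma$-porosity to subsets; both are valid (and in fact the reverse inclusion also holds, since property~(\ref{it:RB}) applied to $B=\bar{B}(\theta,n)$ gives $\varphi_{f,B}(1/n)<1$ as an upper bound on the supremum defining $\mathcal{G}_n$), so your extra lemma is dispensable but certainly does no harm.
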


\begin{proof}
  With $(\mathcal{G}_n)_{n\in\mathbb{N}}$ as in Theorem~\ref{thm:RakotchOnBoundedSubsetsIsSigmaPorous}, the given set is equal to
  \[
    \mathcal{M}\setminus\bigcap_{n\in\mathbb{N}}\mathcal{G}_n = \bigcup_{n\in\mathbb{N}}\mathcal{M}\setminus \mathcal{G}_n,
  \]
  which is a countable union of porous sets both for~$d_{\theta,\phi}$ and~$d_{\theta,s}$ and therefore $\sigma$-porous.
\end{proof}

As a direct consequence of these two results we obtain the following result regarding the existence of fixed points.

\begin{corollary}
  The typical mapping in $(\mathcal{M}, d_{\theta,\phi})$ and $(\mathcal{M}, d_{\theta,s})$ has a unique fixed point which can be reached by sequences of iterates. More precisely, the set of mappings without this property is $\sigma$-$\sqrt{\phi}$-porous in $(\mathcal{M}, d_{\theta,\phi})$ and $\psi_s$-porous in $(\mathcal{M}, d_{\theta,s})$. If $\phi$ satisfies (C5) it is $\sigma$-$\phi$-porous with respect to~$d_{\theta,\phi}$.
\end{corollary}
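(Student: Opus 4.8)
The plan is to take as the set of ``good'' mappings the intersection $\mathcal{F}\cap\bigcap_{n\in\mathbb{N}}\mathcal{G}_n$, where $\mathcal{F}$ is the set furnished by Theorem~\ref{thm:BallIntoBall} in the case of $d_{\theta,\phi}$ (respectively by Theorem~\ref{thm:BallIntoBall2} in the case of $d_{\theta,s}$) and the sets $\mathcal{G}_n$ are the ones introduced before Theorem~\ref{thm:RakotchOnBoundedSubsetsIsSigmaPorous}. First I would show that every $f$ in this set has a unique fixed point reachable by iterates. By property~\eqref{eq:P1} together with Remark~\ref{rem:Balls}, every ball $\bar{B}(\theta,M)$ with $M\geq M_f$ is mapped into itself by $f$, and as a closed subset of the complete space $X$ it is itself a complete metric space. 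Since $f\in\bigcap_{n}\mathcal{G}_n$, the construction of the functions $\varphi_{f,M}$ preceding the corollaries shows that $f$ enjoys property~(\ref{it:RB}); in particular, for every such $M$ the restriction $f|_{\bar{B}(\theta,M)}$ is a Rakotch contraction on the complete space $\bar{B}(\theta,M)$. By Rakotch's fixed point theorem~\cite{Rakotch} it therefore possesses a fixed point $x^\ast\in\bar{B}(\theta,M_f)$, and the sequence of iterates of $f$ started at any point of $\bar{B}(\theta,M)$ converges to it.

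Next I would globalise this. For uniqueness, if $p\neq q$ were two fixed points of $f$, then, choosing a bounded set $B$ containing both and invoking property~(\ref{it:RB}) for this $B$, we would get $\rho(p,q)=\rho(f(p),f(q))\leq\varphi_{f,B}(\rho(p,q))\,\rho(p,q)<\rho(p,q)$, a contradiction; hence $x^\ast$ is the only fixed point of $f$ in $X$. For reachability from an arbitrary point $x_0\in X$, pick $M\geq\max\{M_f,\rho(\theta,x_0)\}$; then $x_0\in\bar{B}(\theta,M)$, this ball is $f$-invariant, and the iterates of $f$ at $x_0$ converge to the fixed point of $f|_{\bar{B}(\theta,M)}$, which is $x^\ast$. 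Thus every $f\in\mathcal{F}\cap\bigcap_{n}\mathcal{G}_n$ has the asserted property.

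It remains to bound the size of the exceptional set, which is contained in $(\mathcal{M}\setminus\mathcal{F})\cup\bigcup_{n\in\mathbb{N}}(\mathcal{M}\setminus\mathcal{G}_n)$. By Theorem~\ref{thm:BallIntoBall} the first term is $\sqrt{\phi}$-porous in $(\mathcal{M},d_{\theta,\phi})$, and $\phi$-porous there if~\eqref{eq:C5} holds; by Theorem~\ref{thm:BallIntoBall2} it is $\psi_s$-porous in $(\mathcal{M},d_{\theta,s})$. By Theorems~\ref{thm:RakotchOnBoundedSubsetsIsSigmaPorous} and~\ref{thm:RakotchOnBoundedSubsetsIsSigmaPorous2} each set $\mathcal{M}\setminus\mathcal{G}_n$ is porous (for the gauge $t\mapsto t$) in the respective metric. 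The only extra point to record is that an ordinarily porous set is also $\sqrt{\phi}$-, $\phi$- and $\psi_s$-porous: by~\eqref{eq:PhiInvSmT} we have $\phi(t)\geq t$ for small $t$, whence $\sqrt{\phi(t)}\geq\sqrt{t}\geq t$, and $\psi_s(t)=t^{1/s}\geq t$ for $t\in(0,1)$ since $s\geq1$, so $\phi$-porosity for the gauge $t\mapsto t$ implies $\phi$-porosity for each of these pointwise larger gauges directly from the definition of porosity. Consequently the exceptional set is $\sigma$-$\sqrt{\phi}$-porous in $(\mathcal{M},d_{\theta,\phi})$ --- and $\sigma$-$\phi$-porous there when~\eqref{eq:C5} holds --- and $\sigma$-$\psi_s$-porous in $(\mathcal{M},d_{\theta,s})$, which is the claim.

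Since the statement merely assembles results already established, I do not anticipate a real obstacle; the only steps needing a little care are passing from a single invariant ball to a fixed point and convergence of iterates valid on all of $X$ --- handled by Remark~\ref{rem:Balls} and the global validity of property~(\ref{it:RB}) --- and the elementary comparison of gauges that lets the ordinary porosity of $\mathcal{M}\setminus\mathcal{G}_n$ feed into $\sqrt{\phi}$-, $\phi$- and $\psi_s$-porosity.
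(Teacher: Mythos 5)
Your proposal is correct and follows essentially the same route as the paper: you define the good set $\mathcal{F}\cap\bigcap_{n}\mathcal{G}_n$, use~\eqref{eq:P1} together with Remark~\ref{rem:Balls} to obtain invariant balls, invoke property~(\ref{it:RB}) (via the $\varphi_{f,M}$) to apply Rakotch's theorem, and then transfer ordinary porosity of $\mathcal{M}\setminus\mathcal{G}_n$ to $\sqrt{\phi}$-, $\phi$- and $\psi_s$-porosity by comparing gauges. The minor difference in how you justify the gauge comparison (using $\sqrt{\phi(t)}\geq\sqrt{t}\geq t$ rather than $\sqrt{\phi(t)}\geq\phi(t)\geq t$) is equally valid and immaterial.
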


\begin{proof}
  We set
  \[
    \tilde{\mathcal{F}} := \bigcap_{n=1}^{\infty} \mathcal{G}_n \cap \mathcal{F}
  \]
  where $\mathcal{F}$ is the set from either Theorem~\ref{thm:BallIntoBall} or Theorem~\ref{thm:BallIntoBall2}. Observe that (C2) together with~\eqref{eq:PhiInvSmT} imply that $\sqrt{\phi(t)}\geq\phi(t)\geq t$ for $t$ small enough. Hence porosity implies $\phi$-porosity which in turn implies $\sqrt{\phi}$-porosity. Similarly, the inequality $\psi_s(t)\geq t$ for $t\leq 1$ implies that porosity implies $\psi_s$-porosity. Note that by the results of this section, the complement of this set satisfies the claimed porosity properties. Now let $f\in \tilde{\mathcal{F}}$ be given. Since $f\in\mathcal{F}$ and by taking into account Remark~\ref{rem:Balls}, there is an $M_f>0$ such that $f(\bar{B}(\theta,M))\subset \bar{B}(\theta,M)$ for all $M\geq M_f$, that is, $f$ is a self-mapping of $\bar{B}(\theta,M)$ for all large enough $M$. On the other hand, since $f\in \bigcap_{n=1}^{\infty} \mathcal{G}_n$ for every $M>0$, there is a decreasing function $\varphi_{f,M}\colon (0,\infty)\to (0,1)$ with
  \[
    \rho(f(x),f(y)) \leq \varphi_{f,M}(\rho(x,y)) \rho(x,y)
  \]
  for all $x,y\in \bar{B}(\theta,M)$ with $x\neq y$. This shows that for large enough balls around~$\theta$ the assumptions of Rakotch's fixed point theorem are satisfied. Hence for all $x\in X$ the sequence
  \[
    x_0 := x,\qquad x_{n+1}:=f(x_n)\qquad\text{for}\;n\in\mathbb{N}
  \]
  converges to a fixed point of $f$. Moreover, note that the above shows that $\rho(f(x),f(y))<\rho(x,y)$ for all $x\neq y$, which implies that the fixed point has to be unique.
\end{proof}

In the rest of this section, we want to show that the set of Rakotch contractions is a small subset of~$\mathcal{M}$ for both metrics $d_{\theta,\phi}$ and $d_{\theta,s}$. To this end, we need some technical preparations. For a mapping $f\colon X\to X$, we denote by $\omega_f$ the modulus of continuity of $f$, that is,
\[
  \omega_f(t) := \sup\{\rho(f(x),f(y))\colon x,y\in X, \rho(x,y)\leq t\}
\]
for $t \geq 0$. For $\mu\in (0,1)$ and $t_0>0$, we set
\[
  \mathcal{N}_{\mu,t_0} := \{f\in\mathcal{M}\colon \omega_f(t_0) \leq \mu t_0\}.
\]

\begin{theorem}\label{thm:modcont}
  For every $t_0>0$ and every $\mu\in(0,1)$, the set $\mathcal{N}_{\mu,t_0}$ is $\phi$-porous in  $(\mathcal{M}, d_{\theta,\phi})$.
\end{theorem}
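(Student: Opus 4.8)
The plan is to verify the criterion of Lemma~\ref{lem:PorousMichael} for the set $\mathcal{N}_{\mu,t_0}$ and the function $\phi$, which is admissible by (C1) and (C2). Since a uniform limit on bounded sets of mappings satisfying $\omega_f(t_0)\le\mu t_0$ again satisfies this inequality, $\mathcal{N}_{\mu,t_0}$ is closed, so it is enough to establish $\phi$-porosity at points $f\in\mathcal{N}_{\mu,t_0}$; for such an $f$ I want constants $r_0>0$ and $\alpha\in(0,1)$, independent of $f$, so that for every $r\in(0,r_0)$ there is a $g\in\mathcal{M}$ with $0<d_{\theta,\phi}(f,g)\le r$ and $B_{d_{\theta,\phi}}\bigl(g,\phi^{-1}(\alpha r)\bigr)\cap\mathcal{N}_{\mu,t_0}=\emptyset$. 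The mapping $g$ will be built to coincide with $f$ on a ball $\bar B(\theta,N)$ with $N=N(r)$ large, and to carry some pair $x_0,y_0$ with $\rho(x_0,y_0)\le t_0$, both lying in $\bar B(\theta,m)$ for a suitable $m=m(r)>N(r)$, onto a pair with $\rho(g(x_0),g(y_0))\ge\tfrac{1+\mu}{2}t_0$.

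Granting such a $g$, the rest is bookkeeping with the conditions on $\phi$. Since $g=f$ on $\bar B(\theta,N)$ we have $d_{n,\theta}(f,g)=0$ for $n\le N$, hence $d_{\theta,\phi}(f,g)\le\sum_{n>N}\phi^{-1}(1/n)$; this tail is at most $N^{-1}\sum_{n>N}n\phi^{-1}(1/n)$ and $\sum_{n\ge1}n\phi^{-1}(1/n)<\infty$ by (C4), so it can be made $\le r$ with $N(r)=o(1/r)$. Pick $m(r)$ with $\lceil\rho(\theta,y_0)\rceil\le m(r)\le C_\phi/r$; by (C3), applied with $k=\lceil t_0\rceil+1$, the quantities $\phi^{-1}(1/m(r))$ and $\phi^{-1}\bigl(1/\lceil\rho(\theta,x_0)\rceil\bigr)$ differ by at most a fixed multiplicative constant, so it suffices to control distances on $\bar B(\theta,m(r))$. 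Choose $\alpha$ so small that $\phi^{-1}(\alpha r)\le\tfrac12\phi^{-1}(1/m(r))$ and $\tfrac{2}{\phi^{-1}(1/m(r))}\phi^{-1}(\alpha r)\le\tfrac14(1-\mu)t_0$; both are achievable for one fixed $\alpha$ and all small $r$ because the convexity estimate \eqref{eq:PhiInvConvIneq} gives $\phi^{-1}(\alpha r)\le\alpha C_\phi\,\phi^{-1}(r/C_\phi)$, while $m(r)\le C_\phi/r$ gives $\phi^{-1}(1/m(r))\ge\phi^{-1}(r/C_\phi)$, so the two fractions above are bounded by $2\alpha C_\phi$. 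Then Lemma~\ref{lem:localAndGlobalDistance} shows that every $h$ with $d_{\theta,\phi}(h,g)\le\phi^{-1}(\alpha r)$ satisfies $\rho(h(z),g(z))\le\tfrac14(1-\mu)t_0$ for $z\in\{x_0,y_0\}$, whence $\rho(h(x_0),h(y_0))\ge\rho(g(x_0),g(y_0))-\tfrac12(1-\mu)t_0\ge\mu t_0$; making the margins strict (e.g. demanding $\rho(g(x_0),g(y_0))\ge\tfrac{1+\mu}{2}t_0$ plus a fixed slack) yields $\omega_h(t_0)>\mu t_0$, i.e. $h\notin\mathcal{N}_{\mu,t_0}$. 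As $N(r)=o(1/r)$ and $m(r)$ may be taken of order $1/r$, the constraint $N(r)<m(r)\le C_\phi/r$ is met for all small $r$; after an arbitrarily small further perturbation of $g$ to ensure $d_{\theta,\phi}(f,g)>0$, Lemma~\ref{lem:PorousMichael} gives the conclusion.

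The heart of the matter is thus the construction of $g$: an element of $\mathcal{M}$ agreeing with $f$ on $\bar B(\theta,N)$ which nonetheless sends some pair of points at distance $\le t_0$, lying not much beyond $\bar B(\theta,N)$, to a pair at distance at least $\tfrac{1+\mu}{2}t_0$. The natural template is to interpolate, over a wide annulus $N\le\rho(\theta,x)\le m-t_0$, from $f$ towards the constant map $x\mapsto f(\theta)$ using the convex-combination operation — say $g_0(x)=(1-s(x))f(x)\oplus s(x)f(\theta)$ with $s$ a Lipschitz function of $\rho(\theta,x)$, equal to $0$ on $\bar B(\theta,N)$ and to $1$ for $\rho(\theta,x)\ge m-t_0$ — and then, on the thin annulus $m-t_0\le\rho(\theta,x)\le m$, to map the sphere $\{\rho(\theta,x)=m-t_0+\tau\}$ onto the point $c(\tau)$ of a fixed geodesic $c\colon[0,t_0]\to X$ issuing from $f(\theta)$, so that a radial pair $x_0,y_0$ at radii $m-t_0$ and $m$ is carried isometrically onto $[c(0),c(t_0)]$; using \eqref{eq:HypIneq} and \eqref{eq:HypLinComb2} one checks $g_0$ is Lipschitz. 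The delicate point — where I expect the real work to lie — is that in a general hyperbolic space such a radial interpolation need not be nonexpansive (already for a convex subset of $\ell_1$, where the radial retraction onto a ball is genuinely about $2$-Lipschitz), and one cannot repair this by composing with a single small contraction towards $f(\theta)$ without violating $d_{\theta,\phi}(f,g)\le r$. Getting the Lipschitz constant down to exactly one will likely require carrying out the collapse of $f$ gradually over many annuli with infinitesimal compensations, and exploiting the fact that membership in $\mathcal{N}_{\mu,t_0}$ is a quasi-contractivity in the large — covering geodesics by segments of length $t_0$ and invoking Lemma~\ref{lem:LipschitzSubset} gives $\rho(f(x),f(y))\le\mu\rho(x,y)+t_0$ for all $x,y$, so $f(\bar B(\theta,N))$ already lies in a ball of radius roughly $\mu N$. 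Once $g$ with the two stated properties is available, the argument above closes.
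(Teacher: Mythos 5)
Your outer porosity argument---the choice of $\alpha$ via the convexity estimate \eqref{eq:PhiInvConvIneq}, the tail estimate from (C4) controlling $d_{\theta,\phi}(f,g)$, and the passage from $d_{\theta,\phi}(h,g)\le\phi^{-1}(\alpha r)$ to pointwise closeness at $x_0,y_0$ via Lemma~\ref{lem:localAndGlobalDistance}---matches the paper's proof and is essentially sound (the closedness remark is superfluous, since $\phi$-porosity is only tested at points of the set, and $g\neq f$ is automatic because $\rho(g(x_0),g(y_0))>\mu t_0\geq\rho(f(x_0),f(y_0))$). The genuine gap is exactly where you locate it yourself: the existence of the nonexpansive perturbation $g$. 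You offer only a template (radial interpolation towards the constant map over a wide annulus, then a thin annulus mapped onto a geodesic), immediately note that it need not be nonexpansive in a general hyperbolic space, and defer the repair to an unexecuted scheme of ``many annuli with infinitesimal compensations''. Since this construction is the entire content of the theorem beyond bookkeeping, the proof is incomplete.

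Moreover, the obstruction you cite---that one cannot compose with a single small contraction towards $f(\theta)$ without violating $d_{\theta,\phi}(f,g)\le r$---is not actually an obstruction, and recognising this is precisely how the paper closes the gap (Lemma~\ref{lem:LargeModCont}). One first replaces $f$ by $\tilde f=(1-\gamma)f\oplus\gamma f(\theta)$ with $\gamma=\tfrac{r}{4C_\phi}$; by Proposition~\ref{prop:StrContrDense} this costs only $C_\phi\gamma\le r/4$ in $d_{\theta,\phi}$ (the metric heavily discounts points far from $\theta$, so a global $(1-\gamma)$-contraction is cheap), and it buys Lipschitz slack $\gamma$. One then composes $\tilde f$ with the retraction $\pi_{x_0,t_0,\delta}$ of Lemma~\ref{lem:getConstBall}, which collapses $B(x_0,t_0)$ to $x_0$ and is $(1+\delta)$-Lipschitz with $\delta=\gamma/(1-\gamma)$, the price being that $\pi$ differs from the identity on a ball of radius $t_0/\gamma\sim t_0/r$---which $d_{\theta,\phi}$ again forgives since $\rho(x,\pi(x))\le t_0$ there. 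Then $(1-\gamma)(1+\delta)=1$, so $\tilde f\circ\pi$ is nonexpansive, agrees with $\tilde f$ on $B(\theta,N)$, and is constant on $B(x_0,t_0)$; on that ball one inserts the spike $\tau$ of Lemma~\ref{lem:minModul} with $\rho(\tau(x_0),\tau(y_0))>\tfrac{1+\mu}{2}t_0$. Note finally that your requirement $g=f$ on $\bar B(\theta,N)$ is stronger than needed and is not what the paper achieves ($g=\tilde f$ there, with $d_{\theta,\phi}(f,\tilde f)<r/2$ absorbed into the budget); insisting on exact agreement with $f$ is what makes your version of the construction appear intractable.
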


We also have a similar result for the metric $d_{\theta,s}$.

\begin{theorem}\label{thm:modcont2}
  For every $t_0>0$ and every $\mu\in(0,1)$, the set $\mathcal{N}_{\mu,t_0}$ is $\psi_s$-porous in  $(\mathcal{M}, d_{\theta,s})$.
\end{theorem}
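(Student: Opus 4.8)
The plan is to mirror the strategy used for Theorem~\ref{thm:modcont}, replacing the estimates for $d_{\theta,\phi}$ by the corresponding ones for $d_{\theta,s}$ and applying Lemma~\ref{lem:PorousMichael} with $\phi$ replaced by $\psi_s$. Fix $t_0>0$ and $\mu\in(0,1)$. Let $f\in\mathcal{M}$ be arbitrary; we must produce, for every sufficiently small $r>0$, a mapping $g\in\mathcal{M}$ with $0<d_{\theta,s}(f,g)\le r$ such that the whole ball $B_{d_{\theta,s}}(g,\psi_s^{-1}(\alpha r))=B_{d_{\theta,s}}(g,(\alpha r)^s)$ misses $\mathcal{N}_{\mu,t_0}$, for some fixed $\alpha\in(0,1)$ independent of $f$ and $r$. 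The natural candidate is a small perturbation of $f$ that increases the oscillation at scale $t_0$: pick two points $x_0,y_0\in X$ with $\rho(x_0,y_0)=t_0$ (or $\le t_0$, chosen so that the perturbation is genuine), and push $f(x_0)$ and $f(y_0)$ apart along the geodesic joining them — for instance, using the convexity structure, move $f(x_0)$ a controlled amount in the direction away from $f(y_0)$ while keeping the map nonexpansive by interpolating the perturbation via a Lipschitz bump function supported on a ball, exactly as in the construction behind Theorem~\ref{thm:modcont}.

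Concretely, I would first place the perturbation far from $\theta$ so that the $d_{\theta,s}$-cost is cheap: choose $x_0$ with $\rho(x_0,\theta)=R$ for a large $R=R(r)$ to be determined. Define $g$ to agree with $f$ outside $\bar B(x_0,t_0)$ and, inside, to be $f$ composed with (or shifted by) a geodesic displacement of size comparable to $t_0$ near $x_0$, tapered to zero on the boundary of the ball by a Lipschitz factor of slope $\le 1$, so that $g\in\mathcal{M}$ and $\rho(g(x),f(x))\le c\,t_0$ for all $x$, supported on $\bar B(x_0,t_0)$. Then $\rho(g(x),f(x))\neq 0$ only for $x$ with $\rho(x,\theta)\ge R-t_0$, so
\[
  d_{\theta,s}(f,g)=\sup_{x}\frac{\rho(f(x),g(x))}{1+\rho(x,\theta)^s}\le\frac{c\,t_0}{1+(R-t_0)^s},
\]
which can be made $\le r$ by taking $R$ of order $r^{-1/s}$. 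Simultaneously, $g$ is built so that $\omega_g(t_0)\ge\rho(g(x_0),g(y_0'))$ exceeds $\frac{1+\mu}{2}t_0$ at a pair of points at distance $\le t_0$ — a fixed margin $\delta_0:=\tfrac{1-\mu}{2}t_0$ above the threshold $\mu t_0$. Finally, for any $h$ with $d_{\theta,s}(g,h)\le(\alpha r)^s$, Lemma~\ref{lem:sLocalToGlobal} gives $\rho(g(x),h(x))\le(1+\rho(x,\theta)^s)(\alpha r)^s$ at the two relevant points, which lie in $\bar B(\theta,R+t_0)$; so $\rho(g(x_i),h(x_i))\le(1+(R+t_0)^s)(\alpha r)^s$. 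Choosing $\alpha$ small enough (depending only on $t_0$, $\mu$, $s$, and the implied constants, since $R\asymp r^{-1/s}$ makes $(1+(R+t_0)^s)(\alpha r)^s\asymp\alpha^s$), this is $\le\delta_0/4$, whence $\omega_h(t_0)\ge\omega_g(t_0)-2\cdot\delta_0/4>\mu t_0$, so $h\notin\mathcal{N}_{\mu,t_0}$. Also $d_{\theta,s}(f,g)>0$ since $g\neq f$. Lemma~\ref{lem:PorousMichael} (applicable because $X$, hence $\mathcal{M}$, has no isolated points and $\psi_s$ is strictly increasing, concave, with $\psi_s(0^+)=0$) then yields $\psi_s$-porosity of $\mathcal{N}_{\mu,t_0}$ at $f$; since $f$ was arbitrary, the set is $\psi_s$-porous.

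The delicate point — as in Theorem~\ref{thm:modcont} — is the explicit construction of the bump perturbation in a general hyperbolic space: one must verify that interpolating a geodesic displacement of controlled size via a Lipschitz-$1$ scalar cutoff keeps the resulting map nonexpansive, using only inequality~\eqref{eq:HypIneq} and the identities \eqref{eq:HypLinComb}, \eqref{eq:HypLinComb2}, together with Lemma~\ref{lem:LipschitzSubset} to glue the perturbed and unperturbed pieces along the boundary sphere. Once that nonexpansiveness bookkeeping is in place, the $d_{\theta,s}$-estimates are routine: the only genuinely new feature compared with the $d_{\theta,\phi}$ case is that the cheapness of the perturbation comes from placing it at radius $R\asymp r^{-1/s}$, which is precisely why the porosity gauge is $\psi_s(t)=t^{1/s}$ rather than linear. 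I would in fact expect the cleanest writeup to extract the perturbation lemma once and cite it in the proofs of both Theorem~\ref{thm:modcont} and Theorem~\ref{thm:modcont2}.
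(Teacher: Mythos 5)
Your overall strategy matches the paper's: place a perturbation at radius $R\asymp r^{-1/s}$ from $\theta$ so that its $d_{\theta,s}$-cost is $O(r)$ while the resulting mapping has $\omega_g(t_0)>\tfrac{1+\mu}{2}t_0$, then use Lemma~\ref{lem:sLocalToGlobal} at the two witness points and invoke the characterisation in Lemma~\ref{lem:PorousMichael} with gauge $\psi_s$. Your suggestion to extract the perturbation step into a standalone lemma shared by Theorems~\ref{thm:modcont} and~\ref{thm:modcont2} is exactly what the paper does with Lemma~\ref{lem:LargeModCont}. The $d_{\theta,s}$-estimate $d_{\theta,s}(f,g)\lesssim t_0/(1+(R-t_0)^s)$ and the closing bound $\rho(h(x_0),h(y_0))\ge\omega_g(t_0)-2\cdot(1+(R+t_0)^s)(\alpha r)^s>\mu t_0$ are both correct in spirit.

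There is, however, one step you elide that cannot be finessed: before perturbing, one must first replace $f$ by a nearby \emph{strict} contraction $\tilde f$ with $\Lip\tilde f\le 1-\gamma$ (Proposition~\ref{prop:StrictContrDenseFords}). Your description of the bump --- a geodesic displacement of size comparable to $t_0$ tapered off by a ``Lipschitz-$1$ scalar cutoff'' --- does not, in general, yield a nonexpansive mapping when applied to an arbitrary $f\in\mathcal{M}$: adding to a $1$-Lipschitz map a displacement whose cutoff has slope of order $1$ pushes the Lipschitz constant strictly above $1$. The construction in Lemma~\ref{lem:LargeModCont} is structurally different and relies on the slack: it composes $\tilde f$ with a $(1+\delta)$-Lipschitz radial retraction $\pi_{x_0,t_0,\delta}$, choosing $\delta=\gamma/(1-\gamma)$ precisely so that $\Lip(\tilde f\circ\pi)\le(1-\gamma)(1+\delta)=1$, and then \emph{replaces} $\tilde f$ on the small ball $B(x_0,t_0)$ by a separate nonexpansive map $\tau$ with $\rho(\tau(x_0),\tau(y_0))>\lambda t_0$ built in Lemma~\ref{lem:minModul}, glued along the boundary sphere via Lemma~\ref{lem:LipschitzSubset}. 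With $\tilde f$ in place the distance estimate becomes the two-term bound
\[
  d_{\theta,s}(f,g)\le d_{\theta,s}(f,\tilde f)+\sup_{\rho(x,\theta)\ge R}\frac{\rho(\tilde f(x),g(x))}{1+\rho(x,\theta)^s}<\tfrac r2+\tfrac r2=r,
\]
after which the rest of your argument goes through essentially as you wrote it. So the gap is localised to the missing pre-shrinking step and the precise form of the perturbation lemma; the porosity bookkeeping that follows is correct.
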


The proofs of these theorems are given in Section~\ref{sec:RakotchSmall}. Using these results we are now able to show the following corollary which generalises Corollary~4.3 of~\cite{Strobin} which states that the set of Rakotch contractions on an unbounded closed and convex subset of a Hilbert space is meagre for the topology of uniform convergence on bounded sets. Since the main tool in~\cite{Strobin} is a generalisation of the Kirszbraun-Valentine extension theorem which is not available outside Hilbert spaces, our proofs, given in Section~\ref{sec:RakotchSmall}, use completely different methods.

\begin{corollary}
  The set $\{f\in\mathcal{M}\colon \omega_f(t) < t\;\text{for some}\;t>0\}$ is $\sigma$-$\phi$-porous in $(\mathcal{M}, d_{\theta,\phi})$ and $\sigma$-$\psi_s$-porous in $(\mathcal{M}, d_{\theta,s})$. In particular the set of Rakotch contractions is $\sigma$-$\phi$-porous and $\sigma$-$\psi_s$-porous in $(\mathcal{M}, d_{\theta,\phi})$ and $(\mathcal{M}, d_{\theta,s})$, respectively.
\end{corollary}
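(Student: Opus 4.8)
The plan is to derive this corollary from Theorems~\ref{thm:modcont} and~\ref{thm:modcont2} by a countable union argument, the key point being that the set in question is a countable union of sets of the form $\mathcal{N}_{\mu,t_0}$.

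Concretely, I would first establish the identity
\[
  \{f\in\mathcal{M}\colon \omega_f(t)<t\ \text{for some}\ t>0\} = \bigcup_{\mu\in\mathbb{Q}\cap(0,1),\, t_0\in\mathbb{Q}\cap(0,\infty)} \mathcal{N}_{\mu,t_0}.
\]
The inclusion ``$\supseteq$'' is trivial: if $f\in\mathcal{N}_{\mu,t_0}$ then $\omega_f(t_0)\le\mu t_0<t_0$ since $\mu<1$. For ``$\subseteq$'', I would use that $t\mapsto\omega_f(t)$ is nondecreasing, because the supremum defining $\omega_f(t)$ ranges over a larger set of pairs $(x,y)$ as $t$ grows. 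So if $\omega_f(t)<t$ for some $t>0$, picking a rational $t_0\in(\omega_f(t),t)$ gives $\omega_f(t_0)\le\omega_f(t)<t_0$, hence $\omega_f(t_0)/t_0<1$, and choosing a rational $\mu\in(\omega_f(t_0)/t_0,1)$ yields $\omega_f(t_0)\le\mu t_0$, i.e. $f\in\mathcal{N}_{\mu,t_0}$. Since the right-hand side is a countable union of sets which are $\phi$-porous in $(\mathcal{M},d_{\theta,\phi})$ by Theorem~\ref{thm:modcont} and $\psi_s$-porous in $(\mathcal{M},d_{\theta,s})$ by Theorem~\ref{thm:modcont2}, the set on the left is $\sigma$-$\phi$-porous, respectively $\sigma$-$\psi_s$-porous, in the respective spaces.

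For the ``in particular'' part, I would verify that every Rakotch contraction lies in this set (in fact that $\omega_f(t)<t$ for \emph{every} $t>0$). Assume $\rho(f(x),f(y))\le\varphi_f(\rho(x,y))\,\rho(x,y)$ for all $x,y\in X$, with $\varphi_f$ decreasing and $\varphi_f(r)<1$ for $r>0$. Fix $t>0$ and take $x,y$ with $\rho(x,y)\le t$: if $\rho(x,y)\le t/2$ then $\rho(f(x),f(y))\le\rho(x,y)\le t/2$, while if $t/2<\rho(x,y)\le t$ then $\varphi_f(\rho(x,y))\le\varphi_f(t/2)$ gives $\rho(f(x),f(y))\le\varphi_f(t/2)\,t$. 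Hence $\omega_f(t)\le\max\{t/2,\varphi_f(t/2)\,t\}<t$. Thus the set of Rakotch contractions is contained in a $\sigma$-$\phi$-porous (resp. $\sigma$-$\psi_s$-porous) set, and since a subset of a $\phi$-porous set is again $\phi$-porous -- for $A\subseteq B$ and $x\in A\subseteq B$ one has $\gamma(x,r,A)\ge\gamma(x,r,B)$, so the relevant $\liminf$ can only increase -- and $\sigma$-$\phi$-porosity is inherited by subsets, the claim follows.

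The substantive work is already contained in Theorems~\ref{thm:modcont} and~\ref{thm:modcont2}, so there is no real obstacle in this corollary; the only points requiring (minor) care are the monotonicity of $\omega_f$, which lets one pass to rational parameters and thereby obtain a genuinely \emph{countable} union, and the stability of ($\sigma$-)$\phi$-porosity under passing to subsets.
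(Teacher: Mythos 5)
Your proof is correct and takes essentially the same approach as the paper: both reduce to a countable union of the sets $\mathcal{N}_{\mu,t_0}$ over a countable family of parameters (the paper uses $\mu=(n-1)/n$ and rational $t_0$, arguing via the complement and the monotonicity of $\omega_f$; you use rational $\mu$ and $t_0$ and verify the decomposition directly) and then invoke Theorems~\ref{thm:modcont} and~\ref{thm:modcont2} together with the hereditary nature of $\phi$-porosity. Your additional verification that every Rakotch contraction satisfies $\omega_f(t)<t$ for all $t>0$ is correct and simply spells out a step the paper leaves implicit.
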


The following proof follows the one of Corollary~4.2 in~\cite{Strobin}.

\begin{proof}
  We denote by $\mathbb{Q}_{+}$ the set of positive rationals and observe that the set
  \[
    \bigcup_{q\in\mathbb{Q}_{+}} \bigcup_{n\in\mathbb{N}} \mathcal{N}_{\frac{n-1}{n},q}
  \]
  is $\sigma$-$\phi$-porous and $\sigma$-$\psi_s$-porous in $(\mathcal{M}, d_{\theta,\phi})$ and $(\mathcal{M}, d_{\theta,s})$, respectively. For a mapping $f$ in the complement of the above set we have $\omega_{f}(q) = q$ for every positive rational number~$q$. Given an irrational number $t\in [0,\infty)$ we pick sequences $(q_n)_{n=1}^{\infty}$ and $(r_n)_{n=1}^{\infty}$ of rational numbers converging to $t$ from above and below, respectively. Then we have $r_n = \omega_f(r_n) \leq \omega_f(t) \leq \omega_f(q_n)=q_n$ for all $n$ which by taking $n\to\infty$ implies that $\omega_f(t)=t$.
\end{proof}

\section{Proof of Theorems~\ref{thm:BallIntoBall} and~\ref{thm:BallIntoBall2}}
In Theorems~\ref{thm:BallIntoBall} and~\ref{thm:BallIntoBall2} we claim three different but closely related porosity results. Since our proofs for all of them use the same construction and only differ in the choice of certain parameters, we present them here in parallel. For this aim denote
\[
  \mathcal{F} = \{f\in\mathcal{M} \colon f \textnormal{ satisfies (P1)}\}.
\]
In order to show that the complement of this set satisfies the claimed porosity conditions at each of its points, let
\[
  f \in \mathcal{M}\setminus\mathcal{F}.
\]
Now we have to choose the required parameters which are different for each of the cases.

We set $r_0:=\min\{1,\phi(\eta_\phi)\}$ in the case of $d_{\theta,\phi}$ and $r_0:=1$ in the case of $d_{\theta,s}$. Moreover, we set
\[
  \alpha := \frac{1}{\sqrt{12C_\phi(\rho(\theta,f(\theta))+2)}}, \qquad\tilde{\alpha} := \frac{1}{12C_\phi(\rho(\theta,f(\theta))+2)+1}
\]
and
\[
  \alpha_s := \frac{1}{6(2+\rho(f(\theta),\theta))}.
\]
Let us briefly comment on the role of these constants. The constant $\alpha$ will be used to show that $\mathcal{M}\setminus \mathcal{F}$ is $\sqrt{\phi}$-porous for the metric $d_{\theta,\phi}$ in the general case and $\tilde{\alpha}$ will be employed to show $\phi$-porosity under the additional assumption that $\phi$ satisfies condition~(C5). Finally, the constant $\alpha_s$ will be used to show that $\mathcal{M}\setminus \mathcal{F}$ is $\psi_s$-porous for the metric $d_{\theta,s}$. Since hyperbolic spaces, being geodesic, cannot contain isolated points and $\phi$, $\sqrt{\phi}$ and~$\psi_s$ satisfy the assumptions of Lemma~\ref{lem:PorousMichael}, we use the characterisation given there to prove the claimed porosity properties. We continue now with the common construction for all proofs but distinguish two cases for the choice of $\gamma$.

For $r \in (0,r_0)$ we set $\gamma := \frac{r}{3}$ in the case of $d_{\theta,s}$ and $\gamma:=\frac{r}{3C_\phi}$ in the case of $d_{\theta,\phi}$. We define
\[
  f_\gamma(x):= (1-\gamma)f(x) \oplus \gamma f(\theta),\qquad x\in X
\]
and observe that
\[
  f_\gamma(\theta) =  (1-\gamma)f(\theta) \oplus \gamma f(\theta) = f(\theta).
\]
Next, we choose
\[
  M_f:= \frac{1 + \rho(f(\theta),\theta)}{\gamma} >1.
\]

Observe that by Propositions~\ref{prop:StrContrDense} and~\ref{prop:StrictContrDenseFords} we have $f_\gamma \in \mathcal{M}$, $d_{\theta,\phi}(f,f_\gamma)<r$ and $d_{\theta,s}(f,f_\gamma)<r$. Having constructed the centre of the required ball, the following three lemmas now show that in all three cases we indeed have such a ball inside $\mathcal{F}$.

\begin{lemma}\label{lem:CInF}%
  For $g \in B_{d_{\theta,\phi}}(f_\gamma, \phi^{-1}((\alpha r)^2))$, we have $g \in\mathcal{F}$.
\end{lemma}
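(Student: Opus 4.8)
The plan is to show that $g$ satisfies property (P1) with the radius $M_f = (1+\rho(f(\theta),\theta))/\gamma$ chosen above, i.e.\ that $g(\bar B(\theta,M_f))\subset\bar B(\theta,M_f)$. The strategy has two parts: first control $\rho(f_\gamma(x),\theta)$ for $x\in\bar B(\theta,M_f)$ using the contraction-toward-$f(\theta)$ structure of $f_\gamma$, and then upgrade this to a bound on $\rho(g(x),\theta)$ by adding the pointwise error $\rho(g(x),f_\gamma(x))$, which Lemma \ref{lem:localAndGlobalDistance} lets us bound in terms of $d_{\theta,\phi}(g,f_\gamma)$.

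For the first part: fix $x\in\bar B(\theta,M_f)$. Since $f_\gamma(x)=(1-\gamma)f(x)\oplus\gamma f(\theta)$ and $f_\gamma(\theta)=f(\theta)$, equation \eqref{eq:HypLinComb} gives $\rho(f_\gamma(x),f(\theta))=(1-\gamma)\rho(f(x),f(\theta))\le(1-\gamma)\rho(x,\theta)\le(1-\gamma)M_f$ using nonexpansiveness of $f$. Then by the triangle inequality
\[
  \rho(f_\gamma(x),\theta)\le(1-\gamma)M_f+\rho(f(\theta),\theta)=(1-\gamma)M_f+\gamma M_f-1=M_f-1,
\]
where I used $\rho(f(\theta),\theta)=\gamma M_f-1$ from the definition of $M_f$. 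So $f_\gamma$ maps $\bar B(\theta,M_f)$ into $\bar B(\theta,M_f-1)$, with a full unit of slack.

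For the second part: I need $\rho(g(x),f_\gamma(x))\le 1$ for all $x\in\bar B(\theta,M_f)$, since then $\rho(g(x),\theta)\le(M_f-1)+1=M_f$ and we are done. Here is the one place requiring care: $M_f$ is not an integer, so I apply Lemma \ref{lem:localAndGlobalDistance} with $m:=\lceil M_f\rceil$, noting $\bar B(\theta,M_f)\subset\bar B(\theta,m)$. The lemma says that if $d_{\theta,\phi}(g,f_\gamma)\le\frac r2\phi^{-1}(\tfrac1m)$ for $r\in(0,1]$ then $\rho(g(z),f_\gamma(z))\le r$ on $\bar B(\theta,m)$; taking $r=1$, it suffices to have $d_{\theta,\phi}(g,f_\gamma)\le\frac12\phi^{-1}(\tfrac1m)$. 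By hypothesis $d_{\theta,\phi}(g,f_\gamma)\le\phi^{-1}((\alpha r)^2)$, so I must check $\phi^{-1}((\alpha r)^2)\le\frac12\phi^{-1}(\tfrac1m)$. The main obstacle is this estimate: I need $m=\lceil M_f\rceil\le\lceil (1+\rho(f(\theta),\theta))/(r/3C_\phi)\rceil$ to be comparable to $1/(\alpha r)^2$ up to the constant factors baked into $\alpha=1/\sqrt{12C_\phi(\rho(\theta,f(\theta))+2)}$. Concretely, $(\alpha r)^2 = r^2/(12C_\phi(\rho(\theta,f(\theta))+2))$ and $M_f\le 3C_\phi(1+\rho(f(\theta),\theta))/r\le 3C_\phi(\rho(f(\theta),\theta)+2)/r$, so $1/M_f\ge r/(3C_\phi(\rho(f(\theta),\theta)+2))\ge (\alpha r)^2$ once $r\le 1$ (using that $4C_\phi(\rho(\theta,f(\theta))+2)\ge 1$ to absorb the $\lceil\cdot\rceil$ rounding and the factor $\tfrac12$). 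Since $\phi^{-1}$ is increasing, $\phi^{-1}((\alpha r)^2)\le\phi^{-1}(\tfrac1m)$, and the extra factor of $\tfrac12$ is handled because $r<r_0\le\phi(\eta_\phi)$ forces $(\alpha r)^2$ into the range where \eqref{eq:PhiInvConvIneq} gives $\phi^{-1}((\alpha r)^2)\le\phi^{-1}(\tfrac1{4m})\le\tfrac12\phi^{-1}(\tfrac1m)$ via Remark \ref{rem:PhiInvConv}. Once this inequality is secured, Lemma \ref{lem:localAndGlobalDistance} yields $\rho(g(x),f_\gamma(x))\le1$ on $\bar B(\theta,M_f)$, completing the argument that $g\in\mathcal F$.
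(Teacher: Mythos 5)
Your proposal is correct and takes essentially the same approach as the paper: both bound $\rho(g(z),\theta)$ via the triangle inequality through $f_\gamma(z)$ and $f_\gamma(\theta)=f(\theta)$, using the contraction property of $f_\gamma$ and Lemma~\ref{lem:localAndGlobalDistance} together with the convexity estimate~\eqref{eq:PhiInvConvIneq} to convert the $d_{\theta,\phi}$-bound into a pointwise bound on the ball. The only cosmetic differences are that the paper applies Lemma~\ref{lem:localAndGlobalDistance} with $m=\lceil M_f\rceil+1$ and recovers $\rho(g,f_\gamma)\le r$ (using the third form of the lemma), while you take $m=\lceil M_f\rceil$ and $r=1$ (the second form) after first isolating the observation $f_\gamma(\bar B(\theta,M_f))\subset\bar B(\theta,M_f-1)$; both routes close the argument with the same constant-tracking computation.
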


\begin{proof}
  In order to check whether $g$ satisfies property (P1), let $z\in \bar{B}(\theta, M_f)$
  be given. In order to show that $g(z)\in \bar{B}(\theta,M_f)$, we first need to observe that
  \begin{align*}
    \phi^{-1}((\alpha r)^2) &= \phi^{-1}\left(\frac{r^2}{12C_\phi(\rho(\theta,f(\theta))+2)}\right) = \phi^{-1}\left(r\frac{1}{6C_\phi\frac{\rho(\theta,f(\theta))+2}{r}+6C_\phi\frac{\rho(\theta,f(\theta))+2}{r}}\right)\\
                            & \leq \phi^{-1}\left(r\frac{1}{2(M_f+2)}\right) \leq r \phi^{-1}\left( \frac{1}{2(\ceil{M_f}+1)}\right),
  \end{align*}
  where the first inequality follows from $2M_f+4 \leq 6C_\phi\frac{\rho(\theta,f(\theta))+2}{r}+ 6C_\phi\frac{\rho(\theta,f(\theta))+2}{r}$ and the last inequality holds by~\eqref{eq:PhiInvConvIneq} of Remark~\ref{rem:PhiInvConv} since $\frac{1}{2(\ceil{M_f}+1)}< \frac{1}{M_f} = \frac{r}{3C_\phi (2+ \rho(\theta,f(\theta)))} < r_0 \leq \phi(\eta_\phi)$ by the choice of $r_0$ and~$\gamma$.
  Since $\rho(z,\theta) \leq \ceil{M_f}+1$ and $\ceil{M_f}+1 > \frac{1}{\phi(\eta_\phi)}$ we may use Lemma~\ref{lem:localAndGlobalDistance} to find that
  \begin{align*}
      \rho(g(z),\theta) &\leq \rho(g(z), f_\gamma(z))  + \rho(f_\gamma(z), f_\gamma(\theta)) + \rho(f_\gamma(\theta),\theta)\leq r + (1-\gamma) M_f + \rho(f_\gamma(\theta),\theta)\\ & \leq 1 +\rho(f(\theta),\theta) + (1-\gamma) M_f = \gamma M_f + (1-\gamma) M_f = M_f
  \end{align*}
  where in the last inequality we use that $r<1$. Hence, we have shown that $g\in\mathcal{F}$, as asserted.
\end{proof}

\begin{lemma}\label{lem:CInF2}%
  Let $\phi$ satisfy~\eqref{eq:C5} and let $g \in B_{d_{\theta,\phi}}(f_\gamma, \phi^{-1}(\tilde{\alpha} r))$. Then, $g \in\mathcal{F}$.
\end{lemma}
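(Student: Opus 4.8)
The plan is to mirror the proof of Lemma~\ref{lem:CInF}, replacing the radius $\phi^{-1}((\alpha r)^2)$ by $\phi^{-1}(\tilde\alpha r)$ and using condition~\eqref{eq:C5} in place of the convexity inequality~\eqref{eq:PhiInvConvIneq} to obtain the crucial bound on the radius. Recall that $M_f = \frac{1+\rho(f(\theta),\theta)}{\gamma}$ with $\gamma=\frac{r}{3C_\phi}$, so that $\frac{1}{M_f} = \frac{\gamma}{1+\rho(f(\theta),\theta)} = \frac{r}{3C_\phi(1+\rho(f(\theta),\theta))}$. The key estimate we need is $\phi^{-1}(\tilde\alpha r) \leq r\,\phi^{-1}\bigl(\frac{1}{2(\ceil{M_f}+1)}\bigr)$, which then plugs into Lemma~\ref{lem:localAndGlobalDistance} exactly as before: since $\ceil{M_f}+1 > \frac{1}{\phi(\eta_\phi)}$, the inequality $d_{\theta,\phi}(g,f_\gamma)\le r\,\phi^{-1}\bigl(\tfrac{1}{2(\ceil{M_f}+1)}\bigr)$ forces $\rho(g(z),f_\gamma(z))\le r$ for all $z\in\bar B(\theta,\ceil{M_f}+1)\supset\bar B(\theta,M_f)$, and then the same triangle-inequality chain
\[
  \rho(g(z),\theta)\le r + (1-\gamma)M_f + \rho(f_\gamma(\theta),\theta) \le 1+\rho(f(\theta),\theta)+(1-\gamma)M_f = \gamma M_f + (1-\gamma)M_f = M_f
\]
closes the argument, using $r<1$ and $f_\gamma(\theta)=f(\theta)$.

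So the heart of the matter is deriving $\phi^{-1}(\tilde\alpha r) \leq r\,\phi^{-1}\bigl(\frac{1}{2(\ceil{M_f}+1)}\bigr)$ from~\eqref{eq:C5}. First I would write $\tilde\alpha r = \frac{r}{12C_\phi(\rho(\theta,f(\theta))+2)+1}$. Since $\phi^{-1}$ is increasing and the denominator $12C_\phi(\rho(\theta,f(\theta))+2)+1$ is at least $2\cdot 3C_\phi(\rho(\theta,f(\theta))+2) = \frac{2r}{\gamma}\cdot\frac{r}{r}$... more precisely $\frac{1}{\gamma} = \frac{3C_\phi}{r}$ so $6C_\phi(\rho(\theta,f(\theta))+2)/r = \frac{2(1+\rho(f(\theta),\theta)) + \text{stuff}}{?}$; the clean way is to note $12C_\phi(\rho(\theta,f(\theta))+2)+1 \geq 2\bigl(\ceil{M_f}+1\bigr)$ because $M_f+2 = \frac{1+\rho(f(\theta),\theta)}{\gamma}+2 \le \frac{3C_\phi(\rho(\theta,f(\theta))+2)}{r}\cdot\frac{1}{1} $ and $r<1$, hence $\ceil{M_f}+1 \le M_f+2 \le 6C_\phi(\rho(\theta,f(\theta))+2)$, giving $2(\ceil{M_f}+1) \le 12C_\phi(\rho(\theta,f(\theta))+2) < 12C_\phi(\rho(\theta,f(\theta))+2)+1$. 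Therefore $\tilde\alpha r \le \frac{r}{2(\ceil{M_f}+1)}$, and monotonicity of $\phi^{-1}$ yields $\phi^{-1}(\tilde\alpha r)\le \phi^{-1}\bigl(\frac{r}{2(\ceil{M_f}+1)}\bigr)$.

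It remains to pass from $\phi^{-1}\bigl(\frac{r}{2(\ceil{M_f}+1)}\bigr)$ to $r\,\phi^{-1}\bigl(\frac{1}{2(\ceil{M_f}+1)}\bigr)$, and this is precisely where~\eqref{eq:C5} enters. Writing $b = \frac{1}{r}\ge 1$ (so $t\mapsto t$ with $t=1$ is awkward; instead) one applies~\eqref{eq:C5} with $t\in(0,1)$, $a = 2(\ceil{M_f}+1)$ and a second parameter chosen so that $\frac{t}{a+b}$ matches $\frac{r}{2(\ceil{M_f}+1)}$; concretely, since we want a factor of $r$, I would use the form $\phi^{-1}\bigl(\frac{t}{a}\bigr)\le \text{(something)}$—but the cleanest route is to instead invoke~\eqref{eq:PhiInvConvIneq} from Remark~\ref{rem:PhiInvConv} directly, exactly as in Lemma~\ref{lem:CInF}: since $\frac{1}{2(\ceil{M_f}+1)} < \frac{1}{M_f} = \frac{r}{3C_\phi(2+\rho(\theta,f(\theta)))} < r_0 \le \phi(\eta_\phi)$ and $r\in(0,1)$, inequality~\eqref{eq:PhiInvConvIneq} gives $\phi^{-1}\bigl(r\cdot\frac{1}{2(\ceil{M_f}+1)}\bigr) \le r\,\phi^{-1}\bigl(\frac{1}{2(\ceil{M_f}+1)}\bigr)$. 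This does not actually need~\eqref{eq:C5}; the role of~\eqref{eq:C5} in the theorem statement is only to improve the exponent from $\sqrt\phi$-porosity to $\phi$-porosity, which manifests as the radius being $\phi^{-1}(\tilde\alpha r)$ (linear in $r$) rather than $\phi^{-1}((\alpha r)^2)$ (quadratic in $r$). The main obstacle, then, is bookkeeping: verifying that with the \emph{linear} radius $\phi^{-1}(\tilde\alpha r)$ one still gets $\rho(g(z),f_\gamma(z))\le r$ on the right ball, which requires the comparison $\tilde\alpha r \le \frac{r}{2(\ceil{M_f}+1)}$ established above together with the case $m\ge\frac{1}{\phi(\eta_\phi)}$ of Lemma~\ref{lem:localAndGlobalDistance}. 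I would also double-check that condition~\eqref{eq:C5} is genuinely used somewhere in the global porosity argument (it is needed, via the Dymond characterisation Lemma~\ref{lem:PorousMichael}, to conclude $\phi$-porosity from the existence of such balls of radius $\phi^{-1}(\tilde\alpha r)$), and flag that the constant $\tilde\alpha$ was chosen precisely to make $2(\ceil{M_f}+1) \le 1/\tilde\alpha$ hold uniformly in $r\in(0,r_0)$.
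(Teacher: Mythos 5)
Your overall skeleton (derive the key estimate $\phi^{-1}(\tilde\alpha r)\le r\,\phi^{-1}\bigl(\tfrac{1}{2(\ceil{M_f}+1)}\bigr)$, feed it into Lemma~\ref{lem:localAndGlobalDistance}, then run the same triangle-inequality chain as in Lemma~\ref{lem:CInF}) is the right one. But the derivation you propose for the key estimate contains a genuine error, and your conclusion that condition~\eqref{eq:C5} is not actually needed is incorrect.

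The problematic step is your claim that $\tilde\alpha r \le \frac{r}{2(\ceil{M_f}+1)}$, i.e.\ that $2(\ceil{M_f}+1)\le 12C_\phi(\rho(\theta,f(\theta))+2)+1$. Recall that
\[
  M_f = \frac{1+\rho(f(\theta),\theta)}{\gamma} = \frac{3C_\phi\bigl(1+\rho(f(\theta),\theta)\bigr)}{r},
\]
so $M_f\to\infty$ as $r\to 0^+$, while the right-hand side $12C_\phi(\rho(\theta,f(\theta))+2)+1$ does not depend on $r$. Hence the inequality fails for all sufficiently small $r\in(0,r_0)$, which is precisely the range where porosity has to be checked. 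The concrete slip in your computation is the passage from $M_f+2 \le \frac{3C_\phi(\rho(\theta,f(\theta))+2)}{r}$ to $M_f+2\le 6C_\phi(\rho(\theta,f(\theta))+2)$: this would require $\tfrac{1}{r}\le 2$, whereas $r<1$ only gives $\tfrac{1}{r}>1$. Once this inequality is lost, your fall-back appeal to~\eqref{eq:PhiInvConvIneq} has nothing to act on, because the quantity $\tilde\alpha r$ does not have the required form $r\cdot a$ with $a\le\frac{1}{2(\ceil{M_f}+1)}$.

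This is exactly where~\eqref{eq:C5} is indispensable, not merely a bookkeeping device. The correct move, which the paper makes, is to apply~\eqref{eq:C5} with $t=r$, $a=1$, $b=12C_\phi(\rho(\theta,f(\theta))+2)$ to obtain
\[
  \phi^{-1}(\tilde\alpha r)
  = \phi^{-1}\!\left(\frac{r}{1+12C_\phi(\rho(\theta,f(\theta))+2)}\right)
  \le \phi^{-1}(r)\,\phi^{-1}\!\left(\frac{r}{12C_\phi(\rho(\theta,f(\theta))+2)}\right),
\]
then use~\eqref{eq:PhiInvSmT} (valid since $r<\phi(\eta_\phi)$ by the choice of $r_0$) to replace $\phi^{-1}(r)$ by $r$, and finally observe that
\[
  \frac{r}{12C_\phi(\rho(\theta,f(\theta))+2)}
  = \frac{1}{\,\tfrac{6C_\phi(\rho(\theta,f(\theta))+2)}{r}+\tfrac{6C_\phi(\rho(\theta,f(\theta))+2)}{r}\,}
  \le \frac{1}{2(M_f+2)}
  \le \frac{1}{2(\ceil{M_f}+1)},
\]
where the inner inequality $M_f+2\le \frac{6C_\phi(\rho(\theta,f(\theta))+2)}{r}$ is the correct one, retaining the factor $\tfrac{1}{r}$. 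Contrast this with Lemma~\ref{lem:CInF}: there the radius is $\phi^{-1}((\alpha r)^2)$ with the argument quadratic in $r$, so one can factor off a single power of $r$ and the remaining factor $\tfrac{r}{12C_\phi(\rho(\theta,f(\theta))+2)}$ is already $\le\frac{1}{2(M_f+2)}$; no multiplicative splitting is needed. With the linear radius $\phi^{-1}(\tilde\alpha r)$, that factoring is impossible, and~\eqref{eq:C5} is what substitutes for it. Everything after the key estimate in your proposal (invoking the $m\ge\tfrac{1}{\phi(\eta_\phi)}$ case of Lemma~\ref{lem:localAndGlobalDistance}, the triangle-inequality chain, $f_\gamma(\theta)=f(\theta)$, $r<1$) is fine and matches the paper.
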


\begin{proof}
  Using (C5) we observe that
  \begin{align*}
    \phi^{-1}(\tilde{\alpha} r) & =  \phi^{-1}\left(\frac{r}{12C_\phi(\rho(\theta,f(\theta))+2)+1}\right) \\ &\leq \phi^{-1}(r) \phi^{-1}\left(\frac{1}{6C_\phi\frac{\rho(\theta,f(\theta))+2}{r}+6C_\phi\frac{\rho(\theta,f(\theta))+2}{r}}\right)\\
                            & \leq r\phi^{-1}\left(\frac{1}{2(M_f+2)}\right) \leq r \phi^{-1}\left(\frac{1}{2(\ceil{M_f}+1)}\right)
  \end{align*}
  where we used $r<\phi(\eta_\phi)$ and~\eqref{eq:PhiInvSmT} of Remark~\ref{rem:PhiInvConv} and proceed as in the proof of Lemma~\ref{lem:CInF}.
\end{proof}

\begin{lemma}
  For $g \in B_{d_{\theta,s}}(f_\gamma, \psi_s^{-1}(\alpha_s r))$, we have $g \in\mathcal{F}$.
\end{lemma}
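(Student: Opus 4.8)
The plan is to copy the proofs of Lemmas~\ref{lem:CInF} and~\ref{lem:CInF2} essentially verbatim, with Lemma~\ref{lem:sLocalToGlobal} replacing Lemma~\ref{lem:localAndGlobalDistance}. Write $D := \rho(f(\theta),\theta)$; recall that in the present case $\gamma = r/3$, $M_f = 3(1+D)/r$, $\alpha_s = 1/(6(2+D))$, and $f_\gamma(\theta) = f(\theta)$. Fix $z \in \bar{B}(\theta, M_f)$; we must show $g(z) \in \bar{B}(\theta, M_f)$. The triangle inequality gives
\[
  \rho(g(z),\theta) \leq \rho(g(z), f_\gamma(z)) + \rho(f_\gamma(z), f_\gamma(\theta)) + \rho(f_\gamma(\theta), \theta),
\]
and since $\Lip f_\gamma \leq 1-\gamma$ and $f_\gamma(\theta) = f(\theta)$, the last two summands are at most $(1-\gamma)M_f + D$. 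As $\gamma M_f = 1 + D$, it therefore suffices to show $\rho(g(z), f_\gamma(z)) \leq 1$, since then $\rho(g(z),\theta) \leq (1 + D) + (1-\gamma)M_f = \gamma M_f + (1-\gamma)M_f = M_f$.

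For the remaining estimate I would invoke Lemma~\ref{lem:sLocalToGlobal} with the pair $(f_\gamma, g)$: since $\rho(z,\theta) \leq M_f$ and $d_{\theta,s}(f_\gamma, g) < \psi_s^{-1}(\alpha_s r) = (\alpha_s r)^s$, this yields $\rho(g(z), f_\gamma(z)) < (1 + M_f^s)(\alpha_s r)^s$. Using the elementary inequality $a^s + b^s \leq (a+b)^s$ for $a,b \geq 0$ and $s \geq 1$ (with $a = 1$, $b = M_f$) one reduces matters to checking $(1+M_f)\,\alpha_s r \leq 1$. Substituting the explicit values, $(1+M_f)\alpha_s r = \frac{r + 3(1+D)}{6(2+D)}$, which, since $r < 1$, is bounded by $\frac{4 + 3D}{6(2+D)} \leq \tfrac12$ (the last inequality being just $8 + 6D \leq 12 + 6D$). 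Hence $\rho(g(z), f_\gamma(z)) < 2^{-s} \leq 1$, so $g$ satisfies~\eqref{eq:P1} and $g \in \mathcal{F}$.

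The only genuinely delicate point is the passage from the $s$-th power bound $(1 + M_f^s)(\alpha_s r)^s$ furnished by Lemma~\ref{lem:sLocalToGlobal} to something that stays $\leq 1$ uniformly in $s$; this is precisely why $\alpha_s$ was given the factor $6$ and why $f_\gamma$ was arranged to fix $\theta$, so that the whole estimate collapses to the single product $(1+M_f)\,\alpha_s r < \tfrac12$. Everything else is the same bookkeeping as in the two preceding lemmas, and with this lemma in hand the $\psi_s$-porosity of $\mathcal{M}\setminus\mathcal{F}$ in $(\mathcal{M}, d_{\theta,s})$ follows at once from the characterisation of $\psi_s$-porous sets in Lemma~\ref{lem:PorousMichael}, exactly as in the $d_{\theta,\phi}$ case.
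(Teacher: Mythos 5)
Your proof is correct and follows essentially the same route as the paper's: the same triangle-inequality decomposition, the same invocation of Lemma~\ref{lem:sLocalToGlobal} to bound $\rho(g(z),f_\gamma(z))$ by $(1+M_f^s)(\alpha_s r)^s$, and the same closing algebra $1+D+(1-\gamma)M_f=M_f$. The only difference is a cosmetic one in how the inequality $(1+M_f^s)(\alpha_s r)^s\leq 1$ is verified — you pass through $1+M_f^s\leq(1+M_f)^s$ and the linear quantity $(1+M_f)\alpha_s r\leq\tfrac12$, whereas the paper computes $(\alpha_s r)^s\leq\frac{1}{2^sM_f^s}\leq\frac{1}{1+M_f^s}$ directly — so the two arguments are interchangeable.
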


\begin{proof}
  In order to check whether $g$ satisfies property (P1), let
  \[
    z \in \bar{B}(\theta,M_f)
  \]
  be given. We show that $g(z)\in \bar{B}(\theta,M_f)$. To this end, we first observe that
  \begin{align*}
    (\alpha_s r)^s &= \frac{r^s}{6^s(2+\rho(f(\theta),\theta))^s} = \frac{\gamma^s}{2^s(2+\rho(f(\theta), \theta))^s} = \frac{1}{2^sM_f^s} \leq \frac{1}{1+M_f^s}
  \end{align*}
  and then use the triangle inequality, Lemma~\ref{lem:sLocalToGlobal} and $\rho(z,\theta)\leq M_f$ to obtain
  \begin{align*}
    \rho(g(z),\theta) &\leq \rho(g(z), f_\gamma(z))  + \rho(f_\gamma(z), f_\gamma(\theta)) + \rho(f_\gamma(\theta),\theta)\\ & \leq (1 + \rho(z,\theta)^s) (\alpha_sr)^s + (1-\gamma) M_f + \rho(f(\theta),\theta)\\ & \leq \frac{1+\rho(z,\theta)^s}{1+M_f^s} + (1-\gamma) M_f + \rho(f(\theta),\theta)\\
                   & \leq 1 +\rho(f(\theta),\theta) + (1-\gamma) M_f = \gamma M_f + (1-\gamma) M_f = M_f,
  \end{align*}
  as required.
\end{proof}

\section{Proof of Theorems~\ref{thm:modcont} and~\ref{thm:modcont2}}\label{sec:RakotchSmall}

In order to prove that the set of Rakotch contractions is $\sigma$-$\phi$-porous in $(\mathcal{M},d_{\theta,\phi})$ and $\sigma$-$\psi_{s}$-porous in $(\mathcal{M},d_{\theta,s})$, we need a number of lemmas on the perturbation of nonexpansive mappings.

\begin{lemma} \label{lem:minModul}
  For each $t_0 > 0$, $\lambda \in (0,1)$ and $x_0, y_0, v \in X$ with $\rho(x_0,y_0) = t_0$, there exists a mapping $\tau \in \mathcal{M}$ with the following properties:
  \begin{enumerate}[(i)]
  \item $\rho(\tau(x_0),\tau(y_0))>\lambda t_0$,
  \item $\tau(x)=v$ for $\rho(x,x_0)\geq t_0$ and
  \item $\rho(\tau(x),v) \leq t_0$ for all $x\in X$.
  \end{enumerate}
\end{lemma}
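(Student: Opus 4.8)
The goal is to construct $\tau \in \mathcal{M}$ which moves $x_0$ and $y_0$ apart by more than $\lambda t_0$, is constantly $v$ outside the ball $\bar B(x_0, t_0)$, and maps everything into $\bar B(v, t_0)$. The natural idea is to build $\tau$ as a ``bump'' supported on $\bar B(x_0, t_0)$: define a scalar cutoff function $\beta\colon X \to [0,1]$ which equals $1$ at $x_0$, vanishes outside $B(x_0,t_0)$, and is Lipschitz with a controlled constant, and then set
\[
  \tau(x) := (1 - \beta(x))\, v \oplus \beta(x)\, w
\]
for a suitable target point $w$. The obvious candidate for $\beta$ is
\[
  \beta(x) := \max\left\{1 - \frac{\rho(x,x_0)}{t_0},\, 0\right\},
\]
which is $\tfrac{1}{t_0}$-Lipschitz on the ball $B(x_0,t_0)$ by the reverse triangle inequality, hence $\tfrac{1}{t_0}$-Lipschitz on all of $X$ by Lemma~\ref{lem:LipschitzSubset} (it vanishes on the complement of that open ball). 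Properties (ii) and (iii) are then immediate: outside $\bar B(x_0,t_0)$ (in fact outside the open ball) $\beta=0$ so $\tau=v$, and $\rho(\tau(x),v) = \beta(x)\rho(v,w) \leq \rho(v,w)$, so it suffices to pick $w$ with $\rho(v,w)\leq t_0$.

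\textbf{Choosing the target and checking nonexpansiveness.} For (i), note $\beta(x_0)=1$ and $\beta(y_0) = 0$ since $\rho(x_0,y_0)=t_0$, so $\tau(x_0)=w$ and $\tau(y_0)=v$; thus (i) becomes $\rho(v,w) > \lambda t_0$. Combined with the constraint from (iii), I want $w$ on the segment-geometry satisfying $\lambda t_0 < \rho(v,w) \leq t_0$ — concretely, take $w := (1-\mu) v \oplus \mu\, (\text{some point at distance} \geq 1 \text{ from } v \text{ in a fixed direction})$, or more simply choose any $w$ with $\rho(v,w)$ equal to, say, $\tfrac{1+\lambda}{2} t_0$; such a $w$ exists because geodesic spaces are, well, geodesic, provided we have a point far enough from $v$ — and if $X$ is a single point the statement is vacuous since then $\rho(x_0,y_0)=0\neq t_0$. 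The only real content is the nonexpansiveness of $\tau$. Using the hyperbolic inequality~\eqref{eq:HypIneq} and~\eqref{eq:HypLinComb2}, for $x,y\in X$ one estimates
\[
  \rho(\tau(x),\tau(y)) = \rho\big((1-\beta(x))v\oplus\beta(x)w,\ (1-\beta(y))v\oplus\beta(y)w\big) \leq |\beta(x)-\beta(y)|\,\rho(v,w) \leq \frac{\rho(v,w)}{t_0}\,\rho(x,y),
\]
using $\Lip\beta \leq 1/t_0$. Since $\rho(v,w)\leq t_0$, the factor is $\leq 1$, so $\tau\in\mathcal{M}$.

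\textbf{Main obstacle.} The step needing care is the passage from the two-point geodesic combination formula~\eqref{eq:HypLinComb2} (which handles combinations of the \emph{same} pair $v,w$ with different coefficients) to the estimate for $\rho(\tau(x),\tau(y))$: strictly, $(1-\beta(x))v\oplus\beta(x)w$ and $(1-\beta(y))v\oplus\beta(y)w$ are both points of the single segment $[v,w]$, and~\eqref{eq:HypLinComb2} applies directly to give the middle equality above — so in fact this is not an obstacle once one observes both points lie on $[v,w]$. The genuinely delicate point is rather ensuring a point $w$ with the prescribed distance from $v$ actually exists; this requires that $X$ contain points arbitrarily far from $v$, which it does since $X$ is unbounded (assumed throughout for $\mathcal{M}$), and then a geodesic from $v$ toward such a point, reparametrised, hits every distance in $[0, t_0]$. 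I would state this explicitly and then the three properties follow by the short computations above.
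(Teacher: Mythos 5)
Your construction is essentially the same as the paper's: the same cutoff function (your $\beta$ is the paper's $\gamma$), the same convex-combination formula $\tau(x)=(1-\beta(x))v\oplus\beta(x)w$, the same use of Lemma~\ref{lem:LipschitzSubset} and~\eqref{eq:HypLinComb2} to bound $\Lip\tau$, and the same appeal to unboundedness to produce $w$; the paper merely writes $\rho(v,w)=\lambda t_0+\varepsilon$ with $0<\varepsilon\leq(1-\lambda)t_0$ in place of your specific choice $\rho(v,w)=\tfrac{1+\lambda}{2}t_0$. Your proof is correct and matches the paper's argument.
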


\begin{proof}
  Let $t_0>0$, $\lambda \in (0,1)$ and $x_0, y_0, v \in X$ with $\rho(x_0,y_0) = t_0$ be given. Choose some $0<\varepsilon\leq(1-\lambda)t_0$. Since $X$ is unbounded, we can find a point $u\in X$ with $\rho(u,v)=\lambda t_0 + \varepsilon$. We now define
  \[
    \gamma(x) := \max\left\{\frac{t_0-\rho(x,x_0)}{t_0}, 0 \right\} \qquad\text{and}\qquad
    \tau(x) = (1-\gamma(x)) v \oplus \gamma(x) u.
  \]
  Note that using Lemma~\ref{lem:LipschitzSubset} we obtain $\Lip \gamma \leq \frac{1}{t_0}$. By definition~$\tau$ satisfies
  \[
    \rho(\tau(x_0), \tau(y_0)) = \rho(u,v) = \lambda t_0 + \varepsilon > \lambda t_0 \qquad\text{and}\qquad \tau(x)=v \text{ for } \rho(x,x_0) \geq t_0.
  \]
  It remains to show that $\tau \in \mathcal{M}$. By Lemma~\ref{lem:LipschitzSubset} it is enough to check the Lipschitz condition for $x, y \in B(x_0,t_0)$ since outside of this ball the mapping is constant.
  Indeed, using~\eqref{eq:HypLinComb2} together with $\Lip\gamma \leq \frac{1}{t_0}$ and $\rho(u,v)=\lambda t_0 + \varepsilon$, we have
  \begin{align*}
    \rho(\tau(x),\tau(y)) &= \rho((1-\gamma(x)) v \oplus \gamma(x) u, (1-\gamma(y)) v \oplus \gamma(y) u)\\
                          &= |\gamma(x)-\gamma(y)| \, \rho(u,v) \leq \frac{\rho(x, y)}{t_0}(\lambda t_0+\varepsilon)  \leq  \rho(x, y),
  \end{align*}
  which finishes the proof.
\end{proof}

\begin{lemma}\label{lem:slowDecreaseToZero}
  For every $z\in X$, $R>0$ and $\varepsilon>0$, the function
  \[
    \lambda_{z,R,\varepsilon}: X\rightarrow [0,1],  \qquad x \mapsto
    \begin{cases}
      1 & \text{for}\; \rho(x,z)\leq R\\
      1 - \varepsilon(\rho(x,z)-R) & \text{for}\; R< \rho(x,z)< R + \frac{1}{\varepsilon}\\
      0 & \text{for}\; \rho(x,z)\geq R + \frac{1}{\varepsilon}
    \end{cases}
  \]
  equals one on $\bar{B}(z,R)$, satisfies $\Lip \lambda_{z,R,\varepsilon} \leq \varepsilon$ and vanishes outside the ball $B(z, R+\frac{1}{\varepsilon})$.
\end{lemma}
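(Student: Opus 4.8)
The plan is to dispose of the first and third assertions directly from the definition and to reduce the Lipschitz estimate to Lemma~\ref{lem:LipschitzSubset}, exactly as in the proof of Lemma~\ref{lem:minModul}. By construction $\lambda_{z,R,\varepsilon}(x)=1$ whenever $\rho(x,z)\le R$ and $\lambda_{z,R,\varepsilon}(x)=0$ whenever $\rho(x,z)\ge R+\tfrac1\varepsilon$, so it equals one on $\bar B(z,R)$ and vanishes outside $B(z,R+\tfrac1\varepsilon)$. Hence only $\Lip\lambda_{z,R,\varepsilon}\le\varepsilon$ requires an argument.

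First I would note that $\lambda_{z,R,\varepsilon}$ is continuous, since the three defining formulas agree on the spheres $\{\rho(x,z)=R\}$ and $\{\rho(x,z)=R+\tfrac1\varepsilon\}$: the affine expression $1-\varepsilon(\rho(x,z)-R)$ takes the value $1$ on the first and $0$ on the second. Next I would cover $X$ by the three closed sets
\[
  Z_1:=\bar B(z,R),\qquad Z_2:=\{x\in X\colon R\le\rho(x,z)\le R+\tfrac1\varepsilon\},\qquad Z_3:=\{x\in X\colon \rho(x,z)\ge R+\tfrac1\varepsilon\},
\]
whose union is all of $X$. On $Z_1$ and $Z_3$ the mapping is constant, hence $0$-Lipschitz. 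On $Z_2$ we have $\lambda_{z,R,\varepsilon}(x)=1-\varepsilon(\rho(x,z)-R)$, so for $x,y\in Z_2$ the reverse triangle inequality gives
\[
  |\lambda_{z,R,\varepsilon}(x)-\lambda_{z,R,\varepsilon}(y)|=\varepsilon\,|\rho(x,z)-\rho(y,z)|\le\varepsilon\,\rho(x,y),
\]
that is, $\Lip(\lambda_{z,R,\varepsilon}|_{Z_2})\le\varepsilon$. Applying Lemma~\ref{lem:LipschitzSubset} to the (finite, hence countable) cover $\{Z_1,Z_2,Z_3\}$ with $L=\varepsilon$ then yields $\Lip\lambda_{z,R,\varepsilon}\le\varepsilon$.

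There is essentially no obstacle here; the only points to be careful about are that the three sets genuinely cover $X$ — every $x$ lies in one of the three regimes, with the boundary spheres shared between consecutive sets — and that one invokes $|\rho(x,z)-\rho(y,z)|\le\rho(x,y)$ on the middle annulus. Alternatively, one could bypass Lemma~\ref{lem:LipschitzSubset} by writing $\lambda_{z,R,\varepsilon}=g\circ\rho(\cdot,z)$ with $g\colon[0,\infty)\to[0,1]$, $g(t)=\max\{0,\min\{1,1-\varepsilon(t-R)\}\}$, observing that $g$ is $\varepsilon$-Lipschitz as a pointwise median of the $\varepsilon$-Lipschitz affine map $t\mapsto 1-\varepsilon(t-R)$ and the constants $0$ and $1$, while $x\mapsto\rho(x,z)$ is $1$-Lipschitz, so the composition is $\varepsilon$-Lipschitz.
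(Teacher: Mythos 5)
Your proof is correct and follows essentially the same route as the paper: the paper's proof also invokes Lemma~\ref{lem:LipschitzSubset}, observes that only the closed annulus $\{R\le\rho(x,z)\le R+\tfrac1\varepsilon\}$ needs checking (since the function is constant inside and outside), and applies the reverse triangle inequality there. You merely make the three-set cover and the continuity check explicit, and you add a clean alternative via the composition $g\circ\rho(\cdot,z)$, both of which are fine.
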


\begin{proof}
  We only have to show that $\Lip \lambda_{z,R,\varepsilon} \leq \varepsilon$. In view of Lemma~\ref{lem:LipschitzSubset}, it suffices to check points $x,y\in X$ with $R\leq \rho(x,z),\rho(y,z) \leq R+\frac{1}{\varepsilon}$. For these, we have
  \[
    |\lambda_{z,R,\varepsilon}(x)-\lambda_{z,R,\varepsilon}(y)| = |\varepsilon \rho(x,z) - \varepsilon\rho(y,z)| \leq \varepsilon \rho(x,y),
  \]
  as required.
\end{proof}

A lemma similar to the following one has recently been obtained by M.~Dymond in~\cite{Dymond2021}.

\begin{lemma}\label{lem:getConstBall}
  For every $z\in X$, $R>0$ and $\varepsilon >0$, the mapping
  \[
    \pi_{z,R,\varepsilon} \colon X \to X, \qquad x \mapsto
    \begin{cases}
      z & \text{for}\;\rho(x,z) < R\\
      \left(1- \frac{R\lambda(x)}{\rho(x,z)} \right) x \oplus \frac{R \lambda(x)}{\rho(x,z)}z & \text{for}\; \rho(x,z) \geq R
    \end{cases},
  \]
  where $\lambda=\lambda_{z,R,\frac{\varepsilon}{R}}$ is the mapping from Lemma~\ref{lem:slowDecreaseToZero}, is a $(1+\varepsilon)$-Lipschitz mapping so that $\pi_z(x) = z$ for $x\in B(z,R)$, $\pi_z(x)=x$ for $x\in X\setminus B(z, R(1+\frac{1}{\varepsilon}))$ and  $\rho(\pi_z(x),x) \leq R$ for all $x\in X$.
\end{lemma}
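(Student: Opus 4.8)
The plan is to read off the three ``geometric'' assertions directly from the formula and to reduce the Lipschitz bound to a single application of the triangle inequality combined with Lemma~\ref{lem:LipschitzSubset}. Throughout I write $\lambda:=\lambda_{z,R,\varepsilon/R}$, so that by Lemma~\ref{lem:slowDecreaseToZero} we have $\lambda\equiv 1$ on $\bar{B}(z,R)$, $\Lip\lambda\le\varepsilon/R$, and $\lambda\equiv 0$ on $X\setminus B(z,R(1+\tfrac{1}{\varepsilon}))$, since $R+\tfrac{R}{\varepsilon}=R(1+\tfrac{1}{\varepsilon})$. For $\rho(x,z)\ge R$ put $s(x):=\tfrac{R\lambda(x)}{\rho(x,z)}$; then $0\le s(x)\le 1$ and $\pi_{z,R,\varepsilon}(x)=(1-s(x))x\oplus s(x)z$, so~\eqref{eq:HypLinComb} gives $\rho(\pi_{z,R,\varepsilon}(x),x)=s(x)\rho(x,z)=R\lambda(x)\le R$, while for $\rho(x,z)<R$ we have $\rho(\pi_{z,R,\varepsilon}(x),x)=\rho(z,x)<R$; this proves the distance bound. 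Since $\lambda$ vanishes on $X\setminus B(z,R(1+\tfrac{1}{\varepsilon}))$, there $s(x)=0$ and hence $\pi_{z,R,\varepsilon}(x)=x$; and $\pi_{z,R,\varepsilon}(x)=z$ for $\rho(x,z)<R$ is immediate.

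For the Lipschitz estimate I would invoke Lemma~\ref{lem:LipschitzSubset} with the closed cover $X=\bar{B}(z,R)\cup\bigl(X\setminus B(z,R)\bigr)$. On $\bar{B}(z,R)$ the map is constant, so its restriction is $(1+\varepsilon)$-Lipschitz trivially; on the sphere $\{\rho(\cdot,z)=R\}$ the two defining formulas agree (there $\lambda=1$, $s=1$ and $(1-1)x\oplus 1\cdot z=z$), which, together with the estimate below and the pasting lemma for closed sets, supplies the continuity of $\pi_{z,R,\varepsilon}$ needed to apply Lemma~\ref{lem:LipschitzSubset}. Thus everything comes down to bounding the Lipschitz constant of $\pi_{z,R,\varepsilon}$ on $Z:=X\setminus B(z,R)$.

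So let $x,y\in Z$, write $a:=\rho(x,z)\ge R$, $b:=\rho(y,z)\ge R$, $s:=s(x)$, $r:=s(y)$, and insert the intermediate point $(1-s)y\oplus sz\in[y,z]$. Applying~\eqref{eq:HypIneq} (with $z$ in the role of the fixed endpoint) to the first leg and~\eqref{eq:HypLinComb2} to the second leg yields
\[
  \rho\bigl(\pi_{z,R,\varepsilon}(x),\pi_{z,R,\varepsilon}(y)\bigr)\le(1-s)\rho(x,y)+|s-r|\,b .
\]
The crux is to estimate $|s-r|\,b=\tfrac{R}{a}\bigl|\lambda(x)b-\lambda(y)a\bigr|$ without the unbounded ratio $b/a$: writing $\lambda(x)b-\lambda(y)a=\lambda(x)(b-a)+a\bigl(\lambda(x)-\lambda(y)\bigr)$ and using $|b-a|\le\rho(x,y)$, $\lambda(x)\le 1$ and $|\lambda(x)-\lambda(y)|\le\tfrac{\varepsilon}{R}\rho(x,y)$, one gets
\[
  |s-r|\,b\le\tfrac{R}{a}\Bigl(\lambda(x)+\tfrac{a\varepsilon}{R}\Bigr)\rho(x,y)=(s+\varepsilon)\rho(x,y).
\]
Adding the two contributions gives $\rho(\pi_{z,R,\varepsilon}(x),\pi_{z,R,\varepsilon}(y))\le(1+\varepsilon)\rho(x,y)$, and Lemma~\ref{lem:LipschitzSubset} then yields $\Lip\pi_{z,R,\varepsilon}\le 1+\varepsilon$.

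I expect the main obstacle to be exactly this last estimate. A naive grouping of $\lambda(x)b-\lambda(y)a$ produces a factor $b/a$, which is unbounded on $Z$; the grouping above is chosen so that the Lipschitz constant of $\lambda$ (which is of order $\varepsilon/R$) couples with the factor $a$ and cancels the $1/a$, leaving precisely the extra $\varepsilon$. Everything else — the pasting argument for continuity and the three geometric assertions — is routine.
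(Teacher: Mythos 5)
Your proof is correct and follows essentially the same route as the paper: insert the intermediate point $(1-s)y\oplus sz$, apply \eqref{eq:HypIneq} to the first leg and \eqref{eq:HypLinComb2} to the second, and then bound $|s-r|\,b$ by $(s+\varepsilon)\rho(x,y)$. Your decomposition $\lambda(x)b-\lambda(y)a=\lambda(x)(b-a)+a(\lambda(x)-\lambda(y))$ is algebraically the same step as the paper's adding and subtracting $R\lambda(x)/\rho(y,z)$ inside the absolute value; the only (minor, beneficial) difference is that you make explicit the continuity check needed to invoke Lemma~\ref{lem:LipschitzSubset}, which the paper leaves implicit.
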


\begin{proof}
  In view of Lemma~\ref{lem:LipschitzSubset}, we only have to check the Lipschitz condition outside the ball $B(z,R)$  since on $B(z,R)$ the mapping $\pi_{z,R,\varepsilon}$ is constant.
  For $\rho(x,z),\rho(y,z) \geq R$ we use the triangle inequality, \eqref{eq:HypIneq}, \eqref{eq:HypLinComb2} and the fact that by Lemma~\ref{lem:slowDecreaseToZero} $\Lip \lambda \leq \frac{\varepsilon}{R}$ to obtain that
  \begin{align*}
    \rho(\pi_{z,R,\varepsilon}(x),\pi_{z,R,\varepsilon}(y)) & = \rho\left(\left(1-\frac{R\lambda(x)}{\rho(x,z)}\right)x\oplus \frac{R \lambda(x)}{\rho(x,z)}z, \left(1-\frac{R\lambda(y)}{\rho(y,z)}\right)y\oplus \frac{R \lambda(y)}{\rho(y,z)}z\right) \\
                        &\leq \rho\left(\left(1-\frac{R\lambda(x)}{\rho(x,z)}\right)x\oplus \frac{R \lambda(x)}{\rho(x,z)}z, \left(1-\frac{R\lambda(x)}{\rho(x,z)}\right)y\oplus \frac{R \lambda(x)}{\rho(x,z)}z\right) \\
                        & \quad + \rho\left(\left(1-\frac{R\lambda(x)}{\rho(x,z)}\right)y\oplus \frac{R \lambda(x)}{\rho(x,z)}z, \left(1-\frac{R\lambda(y)}{\rho(y,z)}\right)y\oplus \frac{R \lambda(y)}{\rho(y,z)}z\right) \\
                        & \leq \left(1-\frac{R\lambda(x)}{\rho(x,z)}\right) \rho(x,y) + \left|\frac{R \lambda(x)}{\rho(x,z)}- \frac{R \lambda(y)}{\rho(y,z)}\right| \rho(y,z) \\
                        & \leq \left(1-\frac{R\lambda(x)}{\rho(x,z)}\right) \rho(x,y) +
                        \left|\frac{R \lambda(x)}{\rho(x,z)}- \frac{R \lambda(x)}{\rho(y,z)}\right| \rho(y,z)\\ & \quad + \left|\frac{R\lambda(x)}{\rho(y,z)}- \frac{R\lambda(y)}{\rho(y,z)}\right|\rho(y,z)\\
                        & \leq \left(1-\frac{R\lambda(x)}{\rho(x,z)}\right) \rho(x,y) + \frac{R\lambda(x)}{\rho(x,z)} \rho(x,y)+ R\Lip \lambda \rho(x,y) \\ &\leq (1+\varepsilon) \rho(x,y).
  \end{align*}
  Note that by Lemma~\ref{lem:slowDecreaseToZero}, for $x\in X$ with $\rho(x,z)\geq  R(1+\frac{1}{\varepsilon})$ we have $\lambda_{z,R,\frac{\varepsilon}{R}}(x)=0$ and hence $\pi_z(x)=x$. Finally observe that for $x\in X\setminus B(z,R)$ we have
  \begin{align*}
    \rho(x,\pi_z(x)) = \rho\left(x, \left(1- \frac{R\lambda(x)}{\rho(x,z)} \right) x \oplus \frac{R \lambda(x)}{\rho(x,z)}z\right) = R\lambda(x) \leq R
  \end{align*}
  by~\eqref{eq:HypLinComb} and Lemma~\ref{lem:slowDecreaseToZero}. This completes the proof.
\end{proof}

\begin{remark}
  In the case where $X$ is an unbounded, closed and convex subset of a Banach space, we can give a more elegant proof for a slight modification of the above lemma using the inequality
  \[
    \|P_Bx-P_By\| \leq \frac{4R}{\|x\|+\|y\|} \|x-y\|
  \]
  for the radial retraction onto the ball $B=\bar{B}(0,R)$
  \[
    P_B(x) = \begin{cases} x & \text{for}\; \|x\|\leq R\\ \frac{Rx}{\|x\|} & \text{otherwise}\end{cases}
  \]
  which is due to C.~F.~Dunkl and K.~S.~Williams, see~\cite{DW1964:SimpleInequality} or~\cite{dFK1967:RadialProjection}, together with the observation that the mapping $x\mapsto x-P_Bx$ is nonexpansive; see, for example, Lemma~4.2 in~\cite{Medjic}. Now we use the function $\lambda_{z,\frac{4R}{\varepsilon},\frac{\varepsilon}{2r}}$ from Lemma~\ref{lem:slowDecreaseToZero} which is one on the ball $\bar{B}(z,\frac{4R}{\varepsilon})$ and satisfies $\Lip \lambda < \frac{\varepsilon}{2R}$ and set
  \[
    \pi_z(x) = x - \lambda(x)P_B(x-z) = z + (1-\lambda(x))(x-z) + \lambda(x)((x-z)-P_B(x-z)).
  \]
  The above-mentioned results imply that $\pi_z$ is nonexpansive on the ball $\bar{B}(z,4R/\varepsilon)$. Outside this ball, we use the triangle inequality, that the image of $P_B$ is a ball of radius $R$ and the above bound on the Lipschitz constant of $\lambda$ to obtain
  \begin{align*}
    \|\pi_z(x)-\pi_z(y)\| & = \|x-y+\lambda(x)P_B(x-z)-\lambda(y)P_B(y-z)\|\\ & \leq \|x-y\| + \|\lambda(x)P_B(x-z)-\lambda(y)P_B(y-z)\|\\
                          & \leq \|x-y\| + \|\lambda(x)P_B(x-z)-\lambda(y)P_B(x-z)\|\\ &\quad  +  \|\lambda(y)P_B(x-z)-\lambda(y)P_B(y-z)\|\\
                          & \leq \|x-y\| + \Lip \lambda R \|x-y\| + \frac\varepsilon2 \|x-y\| \leq (1+\varepsilon) \|x-y\|.
  \end{align*}
  Since $\pi_z$ is continuous, we may use Lemma~\ref{lem:LipschitzSubset} to conclude that $\Lip\pi_z\leq 1+\varepsilon$. While the results on $P_B$ and $x\mapsto x-P_Bx$ generalise to hyperbolic space, the above computation uses the translation invariance of the metric induced by the norm and hence does not work for general hyperbolic spaces.
\end{remark}

The following lemma will be the crucial step of our proofs where we need to construct a nearby mapping with a large enough modulus of continuity.

\begin{lemma}\label{lem:LargeModCont}
  Let $z\in X$, $\gamma,\lambda\in(0,1)$, $t_0\in(0,\infty)$, $R > 0$ and $f\in\mathcal{M}$ be a strict contraction with $\Lip f \leq 1-\gamma$. There are $x_0,y_0\in \bar{B}(z,R+\frac{t_0}{\gamma})$ with $\rho(x_0,y_0)=t_0$ and a nonexpansive mapping $g\colon X\to X$ with $g(x)=f(x)$ for $x\in B(z,R)$, $\rho(g(x),f(x)) \leq 2t_0$ for $x\in X$ and $\rho(g(x_0),g(y_0))>\lambda t_0$.
\end{lemma}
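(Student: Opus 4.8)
The plan is to produce $g$ by altering $f$ on a single ball placed at distance exactly $R+t_0/\gamma$ from $z$, via a two-stage construction; the point $x_0$ will be the centre of that ball. The obstacle to keep in mind is that any admissible $g$ must have Lipschitz constant larger than $\lambda$ somewhere near $x_0$ and $y_0$ (since $\rho(g(x_0),g(y_0))>\lambda t_0$ while $\rho(x_0,y_0)=t_0$), whereas $\Lip f\le 1-\gamma<\lambda$; so one cannot simply paste a steep bump onto $f$ and stay nonexpansive. The way around this is to first use up the slack $\gamma$ in order to flatten $f$ to a constant on a whole ball, and only afterwards to insert the bump inside that flat region, where a bump with Lipschitz constant at most $1$ causes no harm.

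First I would fix the geometry. As $X$ is unbounded and geodesic, travelling along a geodesic issuing from $z$ we may choose $c\in X$ with $\rho(z,c)=R+t_0/\gamma$; put $x_0:=c$ and let $y_0$ be the point on $[z,c]$ with $\rho(c,y_0)=t_0$, so that $\rho(x_0,y_0)=t_0$ and $x_0,y_0\in\bar{B}(z,R+t_0/\gamma)$. Set $\varepsilon:=\gamma/(1-\gamma)$ and let $\pi:=\pi_{c,t_0,\varepsilon}$ be the mapping from Lemma~\ref{lem:getConstBall}: it is $(1+\varepsilon)$-Lipschitz, equals $c$ on $\bar{B}(c,t_0)$, equals the identity outside $B(c,t_0(1+1/\varepsilon))=B(c,t_0/\gamma)$, and satisfies $\rho(\pi(x),x)\le t_0$ for all $x$. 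Since $(1-\gamma)(1+\varepsilon)=1$, the composition $f\circ\pi$ is nonexpansive; moreover it equals the constant $f(c)$ on $\bar{B}(c,t_0)$, coincides with $f$ outside $B(c,t_0/\gamma)$, and satisfies $\rho(f(\pi(x)),f(x))\le(1-\gamma)t_0$ everywhere.

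Next I would insert the bump. Apply Lemma~\ref{lem:minModul} with the given $t_0$, $\lambda$, the points $x_0=c$ and $y_0$, and base point $v:=f(c)$ to obtain $\tau\in\mathcal{M}$ with $\tau(x)=f(c)$ whenever $\rho(x,c)\ge t_0$, with $\rho(\tau(x),f(c))\le t_0$ for all $x$, and with $\rho(\tau(x_0),\tau(y_0))>\lambda t_0$. Then set
\[
  g(x):=\begin{cases}\tau(x)&\text{if }\rho(x,c)\le t_0,\\ f(\pi(x))&\text{if }\rho(x,c)\ge t_0.\end{cases}
\]
On the sphere $\{\rho(x,c)=t_0\}$ both prescriptions give $f(c)$, because there $\pi(x)=c$ and $\tau(x)=v=f(c)$; hence $g$ is well defined, and since it is obtained by gluing two continuous maps that agree on the closed set where their domains overlap, $g$ is continuous.

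Finally I would check the four requirements. As $\rho(z,c)=R+t_0/\gamma$, the balls $B(z,R)$ and $B(c,t_0/\gamma)$ are disjoint, and on $X\setminus B(c,t_0/\gamma)\supseteq B(z,R)$ we have $\pi=\mathrm{id}$, so $g=f$ on $B(z,R)$. For the bound $\rho(g(x),f(x))\le 2t_0$: on $\bar{B}(c,t_0)$ use $\rho(\tau(x),f(x))\le\rho(\tau(x),f(c))+\rho(f(c),f(x))\le t_0+(1-\gamma)t_0\le 2t_0$, and on $X\setminus B(c,t_0)$ use $\rho(f(\pi(x)),f(x))\le(1-\gamma)t_0$. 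The bump property is immediate: $\rho(g(x_0),g(y_0))=\rho(\tau(x_0),\tau(y_0))>\lambda t_0$. And $g\in\mathcal{M}$ follows from Lemma~\ref{lem:LipschitzSubset} applied to the cover $\{\bar{B}(c,t_0),\,X\setminus B(c,t_0)\}$ of $X$: $g$ is continuous, $g|_{\bar{B}(c,t_0)}=\tau|_{\bar{B}(c,t_0)}$ is $1$-Lipschitz because $\tau\in\mathcal{M}$, and $g|_{X\setminus B(c,t_0)}=(f\circ\pi)|_{X\setminus B(c,t_0)}$ is $1$-Lipschitz since $f\circ\pi$ is nonexpansive. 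I expect the flatten-then-bump idea to be the crux; the verifications above should all be routine.
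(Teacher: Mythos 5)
Your proposal is correct and takes essentially the same approach as the paper: flatten $f$ to a constant near $x_0$ using the $(1+\gamma/(1-\gamma))$-Lipschitz retraction $\pi$ from Lemma~\ref{lem:getConstBall}, then paste in the steep mapping $\tau$ from Lemma~\ref{lem:minModul} and glue via Lemma~\ref{lem:LipschitzSubset}. The only cosmetic differences are notational ($c$, $\varepsilon$ in place of the paper's $x_0$, $\delta$) and a slightly sharper bound $\rho(f(\pi(x)),f(x))\le(1-\gamma)t_0$ where the paper uses $\le t_0$.
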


\begin{proof}
  We set
  \[
    \delta:=\frac{\gamma}{1-\gamma} \qquad\text{and}\qquad S:= t_0\left(1+\frac{1}{\delta}\right) = \frac{t_0}{\gamma}.
  \]
  Since $X$ is unbounded, we can pick a point $x_0\in X$ with $\rho(x_0,z)=R+S$ and a point $y_0\in [z,x_0]$ with $\rho(x_0,y_0)=t_0$. Observe that by Lemma~\ref{lem:getConstBall}, we can find a $(1+\delta)$-Lipschitz mapping $\pi:=\pi_{x_0,t_0,\delta}$ on $X$ with $\pi|_{B(x_0,t_0)} = x_0$ and which is equal to the identity outside of $B(x_0,S)$. Let $\tau\in \mathcal{M}$ be the mapping from Lemma~\ref{lem:minModul} for $v=f(x_0)$ satisfying $\rho(\tau(x_0),\tau(y_0)) > \lambda t_0$.  We now define
  \begin{align*}
    g\colon X\to X, \qquad x\mapsto
    \begin{cases}
      \tau(x) & \text{for}\; \rho(x,x_0) \leq t_0\\
      f(\pi(x)) & \text{for}\; \rho(x,x_0)> t_0
    \end{cases}.
  \end{align*}
  Note that by construction $\tau(x)=f(x_0)=f(\pi(x))$ for all $x\in X$ with $\rho(x,x_0)=t_0$. Hence $g$ is continuous and we may use Lemma~\ref{lem:LipschitzSubset} to deduce that $g$ is a nonexpansive mapping because
  \[
    \rho(f(\pi(x)), f(\pi(y))) \leq \Lip f (1+\delta)\rho(x,y) \leq \rho(x,y)
  \]
  by the definition of~$\delta$.

  Next we show that $g(x)= f(x)$ for $x\in B(z,R)$ and $\rho(f(x),g(x)) \leq 2 t_0$ otherwise. Since $\pi(x)=x$ for $\rho(x,x_0)\geq S$ and $\rho(x_0,z)=R+S$, we have $g(x) = f(x)$ for all $x\in X$ with $\rho(x,z)\leq R$.
  If $\rho(x,x_0)\leq t_0$ then by Lemma~\ref{lem:minModul} we have $\rho(\tau(x),f(x_0)) \leq t_0$ and obtain
  \begin{align*}
    \rho(f(x),g(x)) & = \rho(f(x),\tau(x)) \leq \rho(f(x),f(x_0))+ \rho(f(x_0),\tau(x))\\ & \leq  \rho(x,x_0) + \rho(f(x_0), \tau(x)) \leq 2 t_0.
  \end{align*}
  For $\rho(x,x_0) > t_0$ on the other hand we have
  \begin{align*}
    \rho(f(x),g(x)) &= \rho(f(x), f(\pi(x))) \leq \rho(x,\pi(x)) \leq t_0
  \end{align*}
  by Lemma~\ref{lem:getConstBall}.

  We are left to show that $\rho(g(x_0),g(y_0))>\lambda t_0$. Since $\rho(x_0,y_0)=t_0$, we have
  \[
    \rho(g(x_0),g(y_0)) =\rho(\tau(x_0), \tau(y_0)) > \lambda t_0
  \]
  which finishes the proof.
\end{proof}

We conclude this section with the proof that for $t_0 > 0$ and $\mu\in (0,1)$, the set
\[
  \mathcal{N}_{t_0,\mu} := \left\{f \in \mathcal{M} \colon \omega_f(t_0)  \leq \mu t_0\right\}
\]
is a $\phi$-porous subset of $(\mathcal{M}, d_{\theta,\phi})$ and a $\psi_s$-porous subset of  $(\mathcal{M}, d_{\theta,s})$.

\begin{proof}[Proof of Theorem~\ref{thm:modcont}]
  Let $f\in \mathcal{N}_{t_0,\mu}$ be given. We set
  \[
    r_0 := \min\{1,\phi(\eta_\phi)\}, \qquad \varepsilon := \min\left\{\frac{t_0 (1-\mu)}{8}, \frac{1}{2}\right\} \qquad \text{and} \qquad \alpha := \frac{\varepsilon}{4+16 t_0 C_{\phi}}.
  \]
  Let $r\in (0,r_0)$. Using Proposition~\ref{prop:StrContrDense}, we pick an $\tilde{f}\in \mathcal{M}$ with $d_{\theta,\phi}(f,\tilde{f})<\frac{r}{2}$ and
  \[
    \Lip \tilde{f} \leq 1 - \frac{r}{4C_{\phi}}.
  \]
  We set
  \[
    N:=\ceil[\Big]{\tfrac{4C_{\phi}t_0}{r}} \leq \frac{4C_{\phi}t_0+1}{r}
  \]
  and use Lemma~\ref{lem:LargeModCont} for $\tilde{f}$ and the parameters $z=\theta$, $\gamma=\frac{r}{4C_\phi}$, $R=N$ and $\lambda=\frac{1+\mu}{2}$ to obtain points $x_0,y_0\in \bar{B}(\theta,2N )$ and a nonexpansive mapping $g\in\mathcal{M}$ with $g(x)=\tilde{f}(x)$ for all $x\in B(\theta,N)$, $\rho(g(x),\tilde{f}(x))\leq 2t_0$ otherwise and $\rho(g(x_0),g(y_0)) >\frac{1+\mu}{2}t_0$.

  In order to show that $d_{\theta,\phi}(f,g)<r$ we observe that
  \begin{align*}
    d_{\theta,\phi}(f,g) & \leq d_{\theta,\phi}(f,\tilde{f}) + \sum_{n=N}^{\infty} \phi^{-1}\left(\tfrac{1}{n}\right) d_{\theta,n}(\tilde{f},g) < \frac{r}{2} + 2 t_0 \sum_{n=N}^{\infty} \phi^{-1}\left(\tfrac{1}{n}\right) \\ &< \frac{r}{2} + \frac{2t_0}{N} \sum_{n=N}^{\infty} n \phi^{-1}\left(\tfrac{1}{n}\right)  \leq \frac{r}{2} + \frac{2t_0 C_{\phi}}{N} \leq r.
  \end{align*}

  We now want to use Lemma~\ref{lem:PorousMichael} and observe that
  \begin{align*}
    \phi^{-1}(\alpha r) &= \phi^{-1}\left(\varepsilon\frac{r}{4+16 t_0 C_{\phi}}\right) = \phi^{-1}\left(\frac{\varepsilon}{4\frac{1+4t_0C_{\phi}}{r}}\right) \leq \phi^{-1}\left(\frac{\varepsilon}{4N}\right) \leq \varepsilon \phi^{-1}\left(\frac{1}{4N}\right)
  \end{align*}
  by definition of~$\alpha$ and the monotonicity and convexity of $\phi^{-1}$ on $(0,\phi(\eta_\phi))$---see Remark~\ref{rem:PhiInvConv}. Given a mapping $h\in\mathcal{M}$ with $d_{\theta,\phi}(g,h)\leq \phi^{-1}(\alpha r)$ the computation above allows us to apply Lemma~\ref{lem:localAndGlobalDistance} to obtain
  \begin{align*}
    \rho(h(x_0),h(y_0)) &\geq \rho(g(x_0),g(y_0)) - \rho(g(x_0),h(x_0)) - \rho(g(y_0),h(y_0))\\ &\geq \mu t_0 +4\varepsilon - 2\varepsilon  > \mu t_0.
  \end{align*}
  This shows that $\omega_h(t_0)>\mu t_0$ and hence $h\not\in \mathcal{N}_{t_0,\mu}$. By Lemma~\ref{lem:PorousMichael}, this shows that $\mathcal{N}_{t_0,\mu}$ is $\phi$-porous.
\end{proof}

The main structure of the following proof is very similar to the previous one and the main difference is the choice of different values for the parameters.

\begin{proof}[{Proof of Theorem~\ref{thm:modcont2}}]
  Let $f\in \mathcal{N}_{t_0,\mu}$ be given. We set
  \[
    r_0 := 1, \qquad \varepsilon := \min\left\{\frac{t_0 (1-\mu)}{8}, \frac{1}{2}\right\} \qquad \text{and} \qquad \alpha := \frac{\varepsilon^{1/s}}{1+2t_0+(4t_0)^{1/s}}.
  \]
  Let $r\in (0,r_0)$. Using Proposition~\ref{prop:StrictContrDenseFords}, we pick an $\tilde{f}\in \mathcal{M}$ with $d_{\theta,s}(f,\tilde{f})<r/2$ and
  \[
    \Lip \tilde{f} \leq 1 - \frac{r}{2}.
  \]
  We now use Lemma~\ref{lem:LargeModCont} for $\tilde{f}$ and the parameters $z=\theta$, $\gamma=\frac{r}{2}$, $R=\frac{(4t_0)^{1/s}}{r}$ and $\lambda=\frac{1+\mu}{2}$ to obtain points $x_0,y_0\in \bar{B}\left(\theta,\frac{2t_0+(4t_0)^{1/s}}{r}\right)$ and a nonexpansive mapping $g\in\mathcal{M}$ with $g(x)=\tilde{f}(x)$ for all $x\in B(\theta,R)$, $\rho(g(x),\tilde{f}(x))\leq 2t_0$ otherwise and $\rho(g(x_0),g(y_0)) >\frac{1+\mu}{2}t_0$.

  In order to show that $d_{\theta,s}(f,g)<r$ we observe that
  \begin{align*}
    d_{\theta,s}(f,g) & \leq d_{\theta,s}(f,\tilde{f}) + \sup_{\rho(x,\theta)\geq R} \frac{\rho(\tilde{f}(x),g(x))}{1+\rho(x,\theta)^s} < \frac{r}{2} + \frac{2t_0}{1+\frac{4t_0}{r^s}} \leq \frac{r}{2} + r^s \frac{2t_0}{4t_0} \leq r
  \end{align*}
  since $r<1$. We observe that
  \begin{align*}
    (\alpha r)^s &= \frac{\varepsilon r^s}{(1+2t_0+(4t_0)^{1/s})^s} \leq \frac{\varepsilon}{1+\left(\frac{2t_0+(4t_0)^{1/s}}{r}\right)^s}
  \end{align*}
  by definition of~$\alpha$ and since $(1+2t_0+(4t_0)^{1/s})^s\geq 1+(2t_0+(4t_0)^{1/s})^s$ and $r<1$. Given a mapping $h\in\mathcal{M}$ with $d_{\theta,s}(g,h)\leq (\alpha r)^s$ the computation above allows us to apply Lemma~\ref{lem:sLocalToGlobal} to obtain
  \[
    \rho(h(x_0),h(y_0)) \geq \rho(g(x_0),g(y_0)) - 2\varepsilon \geq \mu t_0 + 2\varepsilon > \mu t_0.
  \]
  This shows that $\omega_h(t_0)>\mu t_0$ and hence $h\not\in \mathcal{N}_{t_0,\mu}$. By Lemma~\ref{lem:PorousMichael}, this shows that $\mathcal{N}_{t_0,\mu}$ is $\psi_s$-porous.
\end{proof}

\section{Some extensions}

\subsection{Behaviour of the local Lipschitz constant}

The aim of this section is to extend one of the results of~\cite{Dymond2021} to the setting of nonexpansive mappings on unbounded complete hyperbolic spaces. More precisely, we want to prove the following theorem.

\begin{theorem}\label{thm:MaxLipConst}
  There is a $\sigma$-porous subset $\mathcal{N}\subset\mathcal{M}$ both for $d_{\theta,\phi}$ and $d_{\theta,s}$ such that for every $f\in\mathcal{M}\setminus\mathcal{N}$, the set
  \[
    R(f) := \{x\in X \colon \Lip(f,x) = 1\}
  \]
  is a residual subset of $X$.
\end{theorem}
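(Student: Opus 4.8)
The plan is to follow the strategy that Dymond uses in~\cite{Dymond2021} and adapt it to our two metrics. For $x \in X$ and $\varepsilon > 0$ write $\Lip(f,x,\varepsilon) := \sup\{\rho(f(u),f(v))/\rho(u,v) : u,v \in B(x,\varepsilon), u\neq v\}$ so that $\Lip(f,x) = \lim_{\varepsilon\to 0}\Lip(f,x,\varepsilon)$, and note that $R(f) = \bigcap_{k\in\mathbb{N}}\{x : \Lip(f,x,1/k) > 1 - 1/k\}$. Each set $U_{k} := \{x\in X : \Lip(f,x,1/k) > 1-1/k\}$ is open, so to prove that $R(f)$ is residual it suffices to show that $U_k$ is dense in $X$ for every $f$ outside a suitable $\sigma$-porous exceptional set. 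First I would fix a countable dense subset $\{z_j\}_{j\in\mathbb{N}}$ of $X$ (using separability of $X$, which follows since a complete hyperbolic space that is a union of the separable balls $\bar B(\theta,n)$; if $X$ is not assumed separable one restricts attention to balls around $\theta$ and proceeds locally). The exceptional set will then be built as
\[
  \mathcal{N} := \bigcup_{j\in\mathbb{N}}\ \bigcup_{k\in\mathbb{N}}\ \bigcup_{m\in\mathbb{N}} \mathcal{N}_{j,k,m},
\]
where $\mathcal{N}_{j,k,m}$ is the set of $f\in\mathcal{M}$ such that $\Lip(f,x,1/k)\leq 1-1/k$ for \emph{every} $x\in B(z_j,1/m)$. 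The point is that if $f\notin\mathcal{N}$, then for every $j,k$ there is $m$ and a point $x\in B(z_j,1/m)$ with $\Lip(f,x,1/k)>1-1/k$; letting $m\to\infty$ gives a point of $U_k$ arbitrarily close to $z_j$, so $U_k$ is dense, as required. Thus the theorem reduces to showing each $\mathcal{N}_{j,k,m}$ is porous in $(\mathcal{M},d_{\theta,\phi})$ and in $(\mathcal{M},d_{\theta,s})$.

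The heart of the argument is the porosity of $\mathcal{N}_{j,k,m}$, and here I would reuse the perturbation machinery from Section~\ref{sec:RakotchSmall}, especially Lemma~\ref{lem:LargeModCont}. Given $f\in\mathcal{N}_{j,k,m}$ and $r>0$ small, first replace $f$ by a strict contraction $\tilde f$ with $\Lip\tilde f\leq 1-cr$ and $d(f,\tilde f)<r/2$, using Proposition~\ref{prop:StrContrDense} or~\ref{prop:StrictContrDenseFords}. The idea is then to glue in, on a tiny ball $B(w,1/k)$ with $w\in B(z_j,1/m)$, a local perturbation whose Lipschitz constant on that ball is essentially $1$, while leaving $\tilde f$ unchanged off a slightly larger ball; Lemma~\ref{lem:LargeModCont} (applied with $z=w$, $R$ small, $t_0\approx 1/k$, and $\lambda$ close to $1$, and the radial retraction $\pi_{w,R,\varepsilon}$ of Lemma~\ref{lem:getConstBall}) produces exactly such a $g\in\mathcal{M}$ with $g=\tilde f$ outside a controlled ball, $\rho(g,\tilde f)\leq 2t_0$ there, and two points $x_0,y_0\in B(w,1/k)$ with $\rho(g(x_0),g(y_0))>\lambda\rho(x_0,y_0)$, hence $\Lip(g,w,1/k)>1-1/k$. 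Choosing the radius at which $\tilde f$ is modified of order $t_0/\gamma\sim 1/(kr)$ and keeping it away from $\theta$ or near $z_j$, exactly as in the proofs of Theorems~\ref{thm:modcont} and~\ref{thm:modcont2}, one gets $d_{\theta,\phi}(f,g)<r$ (resp.\ $d_{\theta,s}(f,g)<r$), and then using Lemmas~\ref{lem:localAndGlobalDistance} and~\ref{lem:sLocalToGlobal} any $h$ with $d(g,h)\leq\phi^{-1}(\alpha r)$ (resp.\ $d(g,h)\leq(\alpha r)^s$) still satisfies $\rho(h(x_0),h(y_0))>(1-1/k)\rho(x_0,y_0)$, so $h\notin\mathcal{N}_{j,k,m}$. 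By Lemma~\ref{lem:PorousMichael} this yields $\phi$-porosity (resp.\ $\psi_s$-porosity), and in particular porosity, of $\mathcal{N}_{j,k,m}$; a countable union of these is $\sigma$-porous.

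I expect two points to require care. The first is bookkeeping of the radii: the perturbation must be supported in a ball small enough that (a) it sits inside $B(z_j,1/m)$'s neighbourhood so the new Lipschitz point $w$ lies in $B(z_j,1/m)$, yet (b) large enough, of order $1/(kr)$ in the relevant coordinate, for the gluing in Lemma~\ref{lem:LargeModCont} to close up nonexpansively; reconciling these with the requirement $d(f,g)<r$ means the free point $w$ should be chosen near $z_j$ while the "long neck" of the construction extends outward, and one must check the weight $\phi^{-1}(1/n)$ (resp.\ $1+\rho(x,\theta)^s$) of the region where $g\neq\tilde f$ contributes $o(r)$ — this is exactly the estimate already carried out in the proofs of Theorems~\ref{thm:modcont} and~\ref{thm:modcont2}, so it transfers with cosmetic changes. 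The second, more genuine issue is separability of $X$: nothing in the standing hypotheses guarantees it, so I would phrase the density statement locally — for each $n$, the set $\{x\in\bar B(\theta,n): \Lip(f,x)=1\}$ contains a dense $G_\delta$ of the separable space $\bar B(\theta,n)$ — and then take the union over $n$, which is legitimate since a countable union of $\sigma$-porous sets is $\sigma$-porous and a set that is residual in each $\bar B(\theta,n)$ is residual in $X$. With this adjustment the argument goes through verbatim; the genuinely new content beyond Section~\ref{sec:RakotchSmall} is only the passage from "one bad point" to "dense set of good points'' via the triple union over $j,k,m$.
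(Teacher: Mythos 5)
Your overall strategy---decompose the exceptional set over a countable dense family of centres, balls and Lipschitz thresholds, and show each piece porous by gluing in a localised Lipschitz spike---is close in spirit to what the paper does, but two of the steps are genuinely broken.

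First, separability is not available: $X$ is just a complete hyperbolic space, and the proposed fallback (``restrict to $\bar{B}(\theta,n)$, which is separable'') is based on a false premise---balls in a nonseparable hyperbolic space, for instance a closed convex unbounded subset of a nonseparable Banach space, are themselves nonseparable. The paper avoids this completely by fixing, for each $j$, a \emph{maximal} $2^{-j}$-separated set $\Gamma_j\subset X$ (Zorn's lemma, no cardinality hypothesis) and showing $\bigcup_j\Gamma_j\subset R(f)$ for $f$ outside the $\sigma$-porous set; maximality makes $\bigcup_j\Gamma_j$ dense, so no separability is needed. Second, Lemma~\ref{lem:LargeModCont} does not put $x_0,y_0$ in $B(w,1/k)$: with $z=w$ and $\gamma\sim r$ it places the witnesses on the sphere of radius $R+t_0/\gamma\sim t_0/r$ around $w$, so your choice $t_0\approx 1/k$ sends them to distance $\sim 1/(kr)\to\infty$ as $r\to 0$, far outside $B(z_j,1/m)$, and the claimed $\Lip(g,w,1/k)>1-1/k$ does not follow. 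Taking $t_0\sim r$ instead is the right idea, but one must still verify that the porosity radius $\phi^{-1}(\alpha r)$ (respectively $\psi_s^{-1}(\alpha r)$) and the probe scale $\rho(x_0,y_0)$ shrink at matching linear rates in $r$ with a \emph{fixed} $\alpha$; this is exactly the design problem that the paper's Lemma~\ref{lem:Local1} is built to solve. That lemma constructs a single perturbation $g$ which is an exact isometry on $\bar{B}(z,\varepsilon a/32)$ for \emph{every} $z$ in the separated set $\Gamma$ simultaneously (which also handles the possibly uncountable $\Gamma\cap B(\theta,n)$ appearing in the paper's porous sets), while $\rho(f(x),g(x))\leq\frac{3\varepsilon}{4}\max\{1,\rho(x,\theta)\}$ for all $x$, so both the isometry scale and the $d_{\theta,\phi}$- or $d_{\theta,s}$-size of the perturbation shrink linearly in $\varepsilon\sim r$ and the ratio survives inside a ball of radius comparable to $\alpha r$ around $g$. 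That lemma---not Lemma~\ref{lem:LargeModCont}---is the new ingredient your sketch is missing.
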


Above, $\Lip(f,x)$ denotes the following local variant of the Lipschitz constant. For $f\in\mathcal{M}$, $x\in X$ and $r\in(0,\infty)$, we set
\[
  \Lip(f,x,r) := \sup\left\{\frac{\rho(f(x),f(y))}{\rho(x,y)}\colon y\in X,\; 0 < \rho(x,y) < r\right\}
\]
and
\[
  \Lip(f,x) := \lim_{r\to 0^+} \Lip(f,x,r).
\]
The latter quantity is often called the \emph{local Lipschitz constant of $f$ at $x$}.

The proofs given in this section are based on small modifications of ideas from~\cite{Dymond2021}. The first step is to construct a mapping which is an isometry on a suitable set and close to a given nonexpansive mapping. As a preparation, we need the following lemma on the perturbation of a nonexpansive mapping. For $a>0$, we call a set $\Gamma\subset X$ \emph{$a$-separated} if $\rho(x,y)\geq a$ for all $x,y\in \Gamma$ with $x\neq y$.

\begin{lemma}\label{lem:Local1}
  Let $a\in (0,1)$ and let $\Gamma \subset X$ be $a$-separated, nonempty and non-singleton. Furthermore, let $\varepsilon\in(0,1)$ and $f\in \mathcal{M}$. There exists a mapping $g\in\mathcal{M}$ with
  \[
    \rho(f(x),g(x)) < \frac{3\varepsilon}{4} \max\{1,\rho(x,\theta)\} \qquad \text{for all}\quad x\in X
  \]
  and
  \[
    \rho(g(y), g(z)) = \rho(y,z)
  \]
  for all $z\in\Gamma$ and all $y\in \bar{B}(z, \frac{\varepsilon a}{32})$.
\end{lemma}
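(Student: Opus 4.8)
The idea is to perturb $f$ separately in a neighbourhood of each point $z\in\Gamma$ so that on a small ball around $z$ the perturbed map becomes an isometry, while keeping the perturbations small and disjoint. First I would fix, for each $z\in\Gamma$, a point $w_z\in X$ far from $f(z)$ (using unboundedness of $X$) so that the line segment $[f(z),w_z]$ provides a direction into which we can ``stretch''. More precisely, on the ball $\bar B(z,\tfrac{\varepsilon a}{16})$ I would replace $f$ by a map of the form $x\mapsto (1-\mu(x))\,f(z)\oplus\mu(x)\,w_z$, where $\mu$ is built from the distance $\rho(x,z)$ so that $\rho(g(y),g(z))=\rho(y,z)$ on the smaller ball $\bar B(z,\tfrac{\varepsilon a}{32})$; this is essentially the construction underlying Lemma~\ref{lem:minModul}, used here in reverse — now we want the modulus of continuity to be exactly $t$ (an isometry) rather than just bounded below. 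The radius $\tfrac{\varepsilon a}{32}$ and the gluing collar up to $\tfrac{\varepsilon a}{16}$ are chosen so that, since $\Gamma$ is $a$-separated and $\varepsilon<1$, these collars around distinct points of $\Gamma$ are pairwise disjoint; hence the perturbations do not interfere and $g$ is well defined by setting $g=f$ outside $\bigcup_{z\in\Gamma}\bar B(z,\tfrac{\varepsilon a}{16})$.

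The second step is to verify $g\in\mathcal{M}$. Outside the union of collars, $g=f$ is nonexpansive; inside a single collar the stretching map is $1$-Lipschitz provided the slope of $\mu$ is controlled — this follows from \eqref{eq:HypLinComb2} and Lemma~\ref{lem:LipschitzSubset} exactly as in the proofs of Lemmas~\ref{lem:minModul} and~\ref{lem:slowDecreaseToZero}, by choosing $\|w_z - f(z)\|$ (more precisely $\rho(f(z),w_z)$) of the order of the collar width so that $|\mu(x)-\mu(y)|\,\rho(f(z),w_z)\le\rho(x,y)$. The only place where one crosses between a collar and the exterior is on the sphere $\rho(x,z)=\tfrac{\varepsilon a}{16}$; there I would arrange $\mu\equiv$ the value making the stretched map agree with $f(z)$, i.e. the map is continuous across the boundary and one checks the Lipschitz estimate on the overlap of the two covering pieces, then invokes Lemma~\ref{lem:LipschitzSubset} once more to conclude $\Lip g\le 1$ globally. (A mild technical point: to keep $g$ continuous on the seam one wants $g=f(z)$ there, so one actually perturbs $f$ to a constant near $\partial$-collar and only stretches on the inner ball; this forces $\rho(f(x),g(x))$ to absorb a term like $\rho(f(x),f(z))\le\rho(x,z)\le\tfrac{\varepsilon a}{16}$ plus the diameter $\rho(f(z),w_z)$ of the stretched image.)

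The third step is the distance bound $\rho(f(x),g(x))<\tfrac{3\varepsilon}{4}\max\{1,\rho(x,\theta)\}$. Since $g=f$ off the collars, only $x$ with $\rho(x,z)\le\tfrac{\varepsilon a}{16}$ for some $z\in\Gamma$ matter. For such $x$, $\rho(f(x),g(x))\le\rho(f(x),f(z))+\rho(f(z),g(x))\le\rho(x,z)+\rho(f(z),w_z)$, and by choosing $\rho(f(z),w_z)$ to be at most a small multiple of $\varepsilon a$ (say $\le\tfrac{\varepsilon a}{16}$, so that the whole stretched ball has diameter $\le\tfrac{\varepsilon a}{16}$), the total is $\le\tfrac{\varepsilon a}{8}<\tfrac{3\varepsilon}{4}$; combined with $\max\{1,\rho(x,\theta)\}\ge 1$ this gives the claim with room to spare, uniformly in $z$. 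The $\rho(x,\theta)$-dependence is not even needed here — it is presumably stated in this form for compatibility with the metric $d_{\theta,s}$ in the later application. The main obstacle I anticipate is not any single estimate but the bookkeeping at the seam $\rho(x,z)=\tfrac{\varepsilon a}{16}$: getting a map that is simultaneously (i) globally nonexpansive, (ii) equal to $f$ outside the collars, and (iii) a genuine isometry — not merely close to one — on $\bar B(z,\tfrac{\varepsilon a}{32})$ requires the collar to do two jobs at once (decay from the stretched inner ball back to $f$), and one must check that the $1$-Lipschitz budget is not exceeded when both the decay of $\mu$ and the variation of $f$ itself are present in the collar. This is handled by making the collar wide relative to how far we stretch, which is exactly why the two radii $\tfrac{\varepsilon a}{32}$ and $\tfrac{\varepsilon a}{16}$ differ by a definite factor.
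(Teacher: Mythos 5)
The overall strategy you describe (flatten $f$ near each $z\in\Gamma$ to a constant, then replace that constant plateau with a map that is a genuine isometry on the inner ball, using disjointness coming from $a$-separatedness) does match the paper's construction, but the proposal has a gap that is not a bookkeeping issue: it misses the global contraction step and, as a consequence, underestimates how wide the flattening collar must be and why the distance bound scales with $\rho(x,\theta)$.

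Concretely: to turn $f$ into a map that is constant on a ball $B(z,r')$, one composes with a radial retraction of the type in Lemma~\ref{lem:getConstBall}; that retraction is only $(1+\delta)$-Lipschitz, and $\delta$ is small only if the transition annulus is much wider than $r'$. If, as you propose, the whole perturbation is confined to $B(z,\tfrac{\varepsilon a}{16})$ while the constant plateau occupies $B(z,\tfrac{\varepsilon a}{32})$, the transition annulus is exactly as wide as the plateau, so $\delta=1$ and the flattened map is $2$-Lipschitz — not nonexpansive, and no tent glued on top can repair this. The paper instead uses a flattening radius $R=\tfrac a4$ (still disjoint for distinct $z$ since $\Gamma$ is $a$-separated), which makes $\delta=\tfrac{\varepsilon}{4-\varepsilon}$; but this still gives $\Lip g_0\le 1+\delta>1$. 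The crucial missing step in your plan is to then \emph{globally} contract towards a fixed base point, $g_1(x)=(1-\tfrac{\varepsilon}{4})\,g_0(x)\oplus\tfrac{\varepsilon}{4}\,g_0(\theta)$, which by \eqref{eq:HypIneq} brings the Lipschitz constant back exactly to $1$ while keeping $g_1$ constant on each plateau; only then can the tent be glued in with the help of Lemma~\ref{lem:LipschitzSubset} without exceeding the Lipschitz budget. A purely local perturbation inside $B(z,\tfrac{\varepsilon a}{16})$ cannot absorb the loss incurred by flattening.

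This also explains why your claim that the $\max\{1,\rho(x,\theta)\}$ factor ``is not even needed here'' is incorrect. The global contraction $g_0\mapsto g_1$ moves $x$ by $\tfrac{\varepsilon}{4}\rho(g_0(x),g_0(\theta))$, which grows linearly in $\rho(x,\theta)$ (see \eqref{eq:distg0g1} in the paper); a uniform bound like $\rho(f(x),g(x))\le\tfrac{\varepsilon a}{8}$ is simply false for the construction that works. The factor $\max\{1,\rho(x,\theta)\}$ is therefore a consequence of the method, not a cosmetic choice for compatibility with $d_{\theta,s}$.
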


\begin{proof}
  We set
  \[
    r:= \frac{\varepsilon a}{16}, \qquad R:=\frac{a}{4}, \qquad \delta:=\frac{\varepsilon}{4-\varepsilon}
  \]
  and denote for $z\in \Gamma$ by $\pi_z$ the mapping $\pi_{z,r,\delta}$ from Lemma~\ref{lem:getConstBall}. Since $r(1+\frac{1}{\delta})=R$ we have $\pi_z(x)=x$ outside the ball $B(z,R)$. We define a mapping
  \[
    g_0\colon X\to X,\qquad x \mapsto
    \begin{cases}
      f(x)        &\text{if}\; x\in X\setminus \bigcup_{z\in\Gamma} B(z,R)\\
      f(\pi_z(x)) &\text{if}\; x \in B(z,R)\;\text{for}\; z\in\Gamma
    \end{cases}.
  \]
  Note that $g_0$ is continuous. In order to obtain a bound for the Lipschitz constant of $g_0$, we first observe that by Lemma~\ref{lem:getConstBall} we have $\Lip g_0|_{B(z,R)} \leq \Lip f\, \Lip \pi_z \leq 1 + \delta$ for all $z\in \Gamma$. Moreover since $R=\frac{a}{4}$ and $\Gamma$ is $a$-separated, we have $B(z,2R)\cap B(z',R) = \emptyset$ for $z,z'\in \Gamma$ with $z\neq z'$ and hence on $B(z,2R)\setminus B(z,R)$ the mapping $g_0$ agrees with the nonexpansive mapping $f$. Therefore $B(z,2R)$ is covered by two sets on which the Lipschitz constant of $g_0$ is bounded by $1+\delta$. Since balls in hyperbolic spaces are convex and hence geodesic, we may use Lemma~\ref{lem:LipschitzSubset} to conclude that the restriction of $g_0$ to $B(z,2R)$ is Lipschitz with Lipschitz constant at most $1+\delta$. Since on $X\setminus \bigcup_{z\in\Gamma} \bar{B}(z,R)$ the mapping $g_0$ agrees with the nonexpansive mapping~$f$, we have obtained a cover of $X$ consisting of open sets on which the Lipschitz constant of $g_0$ is bounded by $1+\delta$. Given two points $x,y\in X$, the metric segment $[x,y]$ is an isometric copy of a compact interval and hence compact itself. Hence it is covered by finitely many of these open sets and we can apply Lemma~\ref{lem:LipschitzSubset} to the restriction of $g_0$ to $[x,y]$ and conclude that $\rho(g_0(x),g_0(y))\leq (1+\delta)\rho(x,y)$. Since $x,y$ were arbitrary, this shows that $\Lip g_0 \leq 1 +\delta$. In addition note that for $x\in B(z,R)$ the inequality
  \[
    \rho(f(x),g_0(x)) \leq \rho(x,\pi_z(x)) \leq r
  \]
  and for $x\in B(z,r)$ the equality $g_0(x)=g_0(z)$ holds. We now set
  \[
    g_1\colon X \to X, \qquad x \mapsto \left(1-\frac{\varepsilon}{4}\right) g_0(x) \oplus \frac{\varepsilon}{4} g_0(\theta)
  \]
  and note that
  \begin{equation}\label{eq:distg0g1}
    \begin{aligned}
      \rho(g_0(x),g_1(x)) & = \rho\left(g_0(x), \left(1-\frac{\varepsilon}{4}\right) g_0(x) \oplus \frac{\varepsilon}{4} g_0(\theta)\right) = \frac{\varepsilon}{4} \rho(g_0(x),g_0(\theta))\\
      &\leq \rho(x,\theta) (1+\delta)\frac{\varepsilon}{4} \leq \rho(x,\theta) \frac{\varepsilon}{2}
    \end{aligned}
  \end{equation}
  by~\eqref{eq:HypLinComb} and since $\Lip g_0\leq 1+\delta\leq 2$. Since by construction $g_0$ is constant on the balls $B(z,r)$, $z\in\Gamma$, the same is true for $g_1$. More precisely, we have $g_1(x)=g_1(z)$ for all $x\in B(z,r)$ and $z\in\Gamma$. Since $X$ is unbounded, for every $z\in \Gamma$ we may pick a point $p_z\in X$ with $\rho(g_1(z),p_z)=\frac{a}{3}$. Using these points, we now define $ g \colon X \to X$ by
  \[
    g(x) =
    \begin{cases}
      g_1(x) &\text{if } x \in X \setminus \bigcup_{z\in \Gamma} B(z,r)\\[1mm]
      \left(1-3 \frac{r - \rho(z,x)}{a}\right) g_1(z) \oplus 3 \frac{r - \rho(z,x)}{a} p_z & \text{if}\; x\in  B(z, r)\setminus B(z,\frac{r}{2}) \;\text{for}\;z\in \Gamma\\[1.5mm]      \left(1-\frac{3 \rho(z,x)}{a}\right) g_1(z) \oplus \frac{3 \rho(z,x)}{a} p_z &\text{if}\; x\in  B(z, \frac{r}{2}) \;\text{for}\;z\in \Gamma
    \end{cases}
  \]
  which is well defined since $\frac{3\rho(z,x)}{a}\leq \frac{3\varepsilon}{16}<1$ for $\rho(z,x)\leq \frac{r}{2}$.
  The mapping $g$ can be thought of as a perturbation of $g_1$ such that on each of the balls where $g_1$ is constant, we add something which forces the Lipschitz constant to be one. Note that the mapping $g$ is continuous. We now show that it is, in fact, nonexpansive using an argument similar to the one above to show that $\Lip g_0\leq 1+\delta$. Given $x,y\not\in B(z,r)$ for $z\in \Gamma$, we have
  \begin{align*}
    \rho(g(x),g(y)) &= \rho(g_1(x),g_1(y))  \leq \left(1-\frac{\varepsilon}{4}\right) \rho(g_0(x),g_0(y)) \leq \frac{4-\varepsilon}{4} (1+\delta) \rho(x,y)\\ &= \frac{4-\varepsilon}{4}\frac{4}{4-\varepsilon} \rho(x,y) = \rho(x,y)
  \end{align*}
  by~\eqref{eq:HypIneq} and $\Lip g_0\leq 1+\delta=\frac{4}{4-\varepsilon}$. For $z\in \Gamma$ and $x,y\in B(z,\frac{r}{2})$ using~\eqref{eq:HypLinComb}, we obtain
  \[
    \rho(g(x),g(y)) = \frac{3}{a}\rho(g_1(z),p_z) |\rho(x,z)-\rho(y,z)| = |\rho(x,z)-\rho(y,z)|\leq \rho(x,y)
  \]
  and similarly for $z\in \Gamma$ and $x,y\in B(z,r) \setminus B(z,\frac{r}{2})$. Since $r < \frac{a}{4}$, the ball $B(z,2r)$ for $z\in \Gamma$ does not intersect $B(z',r)$ for $z'\in \Gamma$, $z'\neq z$. Hence, the above computations show that $B(z,2r)$ is covered by three sets on which the Lipschitz constant of $g$ is bounded by one. Therefore we may deduce from Lemma~\ref{lem:LipschitzSubset} that $g$ is is nonexpansive on the ball $B(z,2r)$ for every $z\in\Gamma$. The first of the above computations shows that $g$ is nonexpansive on the set $\Gamma\setminus \bigcup_{z\in\Gamma} \bar{B}(z,r)$. We now have an open cover of $X$ with sets on which $g$ is nonexpansive. As above, for $x,y\in X$ we can now use the compactness of $[x,y]$ together with  Lemma~\ref{lem:LipschitzSubset} to conclude that $\rho(g(x),g(y)) \leq \rho(x,y)$. Since $x$ and $y$ were arbitrary, this shows that $g$ is nonexpansive.

  In order to estimate the distance between $g_1$ and $g$, first recall that $g_1(x)=g_1(z)$ for all $x\in B(z,r)$. Note that for $x\in B(z,\frac{r}{2})$ we have
  \[
    \rho(g_1(x),g(x)) = \rho\left(g_1(z), \left(1-\frac{3 \rho(z,x)}{a}\right) g_1(z) \oplus \frac{3 \rho(z,x)}{a} p_z\right) = \frac{3 \rho(z,x)}{a} \rho(g_1(z),p_z) \leq \frac{r}{2}
  \]
  by~\eqref{eq:HypLinComb} and since $\rho(g_1(z),p_z)=\frac{a}{3}$. For $x \in B(z,r) \setminus B(z,\frac{r}{2})$ we use again~\eqref{eq:HypLinComb} and $\rho(g_1(z),p_z)=\frac{a}{3}$ to conclude that
  \begin{align*}
    \rho(g_1(x),g(x)) &= \rho\left(g_1(z), \left(1-3 \frac{r - \rho(z,x)}{a}\right) g_1(z) \oplus 3 \frac{r - \rho(z,x)}{a} p_z \right)\\ & = 3 \frac{r - \rho(z,x)}{a} \rho(g_1(z),p_z) \leq \frac{r}{2}.
  \end{align*}
  Combing these inequalities we have $\rho(g_1(x),g(x))\leq  \frac{r}{2}$ for all $x\in X$. Using the triangle inequality together with the bounds acquired above, we obtain that
  \begin{align*}
    \rho(f(x),g(x)) & \leq \rho(f(x),g_0(x)) + \rho(g_0(x),g_1(x)) + \rho(g_1(x), g(x)) \leq r + \frac{\varepsilon}{2} \rho(x,\theta) + r  \\
                    & \leq \left(\frac{\varepsilon}{8} +\frac{\varepsilon}{2}\right) \max\{1,\rho(x,\theta)\} < \frac{3\varepsilon}{4} \max\{1,\rho(x,\theta)\}
  \end{align*}
  since $r<\frac{\varepsilon}{16}$.  Finally, given $z\in\Gamma$ and $y\in \bar{B}(z, \frac{\varepsilon a}{32}) =  \bar{B}(z, \frac{r}{2})$, by~\eqref{eq:HypLinComb} and the fact that $g(z)=g_1(z)$ we have
  \[
    \rho(g(y), g(z)) = \frac{3}{a}\rho(g_1(z),p_z) \rho(y,z) = \rho(y,z)
  \]
  which finishes the proof.
\end{proof}

With this tool at hand, we are now able to show the following crucial lemmas.

\begin{lemma}
  Let $a\in(0,1)$, $\Gamma \subset X$ be $a$-separated and $\lambda\in(0,1)$. Furthermore let $n \in \mathbb{N}$ be such that the set $\Gamma\cap B(\theta,n)$ is nonempty and non-singleton. Then
  \[
    \mathcal{N}_{\lambda, \Gamma, a, n} :=\{ f\in\mathcal{M} \colon \inf_{x\in\Gamma\cap B(\theta, n)} \Lip(f,x,a) \leq \lambda\}.
  \]
  is a porous subset of $(\mathcal{M}, d_{\theta,\phi})$.
\end{lemma}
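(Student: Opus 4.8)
The plan is to prove ordinary porosity directly from the classical definition recorded after Lemma~\ref{lem:PorousMichael} (equivalently, Lemma~\ref{lem:PorousMichael} with $\mathrm{id}$ in place of the gauge). Fix $f\in\mathcal N_{\lambda,\Gamma,a,n}$; for each small $r>0$ I want to exhibit a $g\in\mathcal M$ with $d_{\theta,\phi}(f,g)<r$ and a constant $\alpha>0$, independent of $r$ and of $f$, such that the ball $B_{d_{\theta,\phi}}(g,\alpha r)$ misses $\mathcal N_{\lambda,\Gamma,a,n}$. The mapping $g$ will be produced by applying the perturbation Lemma~\ref{lem:Local1} to $f$ and $\Gamma$ with parameter $\varepsilon:=r/C_\phi$; this is legitimate since $C_\phi\ge 1$ forces $\varepsilon\in(0,1)$ and since $\Gamma$ is nonempty and non-singleton (because $\Gamma\cap B(\theta,n)$ already is). The resulting $g$ satisfies $\rho(f(x),g(x))<\tfrac34\varepsilon\max\{1,\rho(x,\theta)\}$ for every $x\in X$ and, crucially, $\rho(g(y),g(z))=\rho(y,z)$ for every $z\in\Gamma$ and every $y\in\bar B(z,\tfrac{\varepsilon a}{32})$; in particular $\Lip(g,z,a)=1$ at each $z\in\Gamma$.

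First I would check that $g$ is $d_{\theta,\phi}$-close to $f$: the perturbation bound yields $d_{k,\theta}(f,g)\le\tfrac34\varepsilon\,k$ for $k\in\mathbb N$, whence $d_{\theta,\phi}(f,g)\le\tfrac34\varepsilon\sum_{k\ge1}k\,\phi^{-1}(1/k)\le\tfrac34\varepsilon\,C_\phi=\tfrac34 r<r$ — this is the step where condition~(C4) absorbs the linear growth of the perturbation in $\rho(x,\theta)$. Then I would set $m:=n+1$ and $\alpha:=\dfrac{(1-\lambda)\,a\,\phi^{-1}(1/m)}{256\,C_\phi}$. Since $(1-\lambda)ar<1\le C_\phi$, one gets $\alpha r\le\tfrac12\phi^{-1}(1/m)$, so Lemma~\ref{lem:localAndGlobalDistance} applies: for every $h\in\mathcal M$ with $d_{\theta,\phi}(g,h)\le\alpha r$ we get $\rho(g(x),h(x))\le\tfrac{2\alpha r}{\phi^{-1}(1/m)}=\tfrac{(1-\lambda)\varepsilon a}{128}$ for all $x\in\bar B(\theta,m)$. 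Now, for any fixed $z\in\Gamma\cap B(\theta,n)$, I would pick a point $y$ at distance exactly $\tfrac{\varepsilon a}{32}$ from $z$ on a geodesic joining $z$ to some other point of $\Gamma\cap B(\theta,n)$ (available because that set is non-singleton, $X$ is geodesic, and $\tfrac{\varepsilon a}{32}<a$ does not exceed that distance). Both $y$ and $z$ then lie in $\bar B(\theta,m)$, so the triangle inequality gives $\rho(h(y),h(z))\ge\rho(g(y),g(z))-2\cdot\tfrac{(1-\lambda)\varepsilon a}{128}=\tfrac{1+\lambda}{2}\cdot\tfrac{\varepsilon a}{32}=\tfrac{1+\lambda}{2}\rho(y,z)$, hence $\Lip(h,z,a)\ge\tfrac{1+\lambda}{2}>\lambda$. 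As this lower bound does not depend on $z$, we conclude $\inf_{x\in\Gamma\cap B(\theta,n)}\Lip(h,x,a)\ge\tfrac{1+\lambda}{2}>\lambda$, i.e.\ $h\notin\mathcal N_{\lambda,\Gamma,a,n}$, which is exactly what was needed.

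The genuine point to get right — everything else being bookkeeping with constants — is the \emph{uniformity} over the (possibly infinite) set $\Gamma\cap B(\theta,n)$. Lemma~\ref{lem:Local1} is tailored so that $g$ becomes an exact isometry on balls of one common radius $\tfrac{\varepsilon a}{32}$ around \emph{all} points of $\Gamma$ at once, and all of these balls are contained in the single bounded set $\bar B(\theta,n+1)$ on which the series metric $d_{\theta,\phi}$ controls the supremum distance via Lemma~\ref{lem:localAndGlobalDistance}. This is precisely what lets one perturbation $g$ of $f$ destroy membership in $\mathcal N_{\lambda,\Gamma,a,n}$ throughout a whole $d_{\theta,\phi}$-ball, and with a porosity constant $\alpha$ independent of $r$ and of the initial $f$; without this uniformity one could only hope to push a single point of $\Gamma$ out of the $\lambda$-level set.
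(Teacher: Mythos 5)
Your proposal is correct and follows essentially the same route as the paper: apply Lemma~\ref{lem:Local1} with $\varepsilon = r/C_\phi$ to produce a $d_{\theta,\phi}$-close mapping $g$ that is an exact isometry on balls of radius $\frac{\varepsilon a}{32}$ around every point of $\Gamma$, then use Lemma~\ref{lem:localAndGlobalDistance} on the bounded ball $\bar B(\theta,n+1)$ to show that every $h$ within $d_{\theta,\phi}$-distance $\alpha r$ of $g$ has $\Lip(h,z,a)\ge\frac{1+\lambda}{2}$ for all $z\in\Gamma\cap B(\theta,n)$ simultaneously. The only deviations from the paper's proof are cosmetic (you take $m=n+1$ and the test point $y$ at distance exactly $\frac{\varepsilon a}{32}$ from $z$, whereas the paper works with $n$ and distance $\frac{\varepsilon a}{64}$, giving slightly different but equally valid constants), and your explicit choice of $y$ on the geodesic to a second point of $\Gamma\cap B(\theta,n)$ is if anything a small tightening of the paper's terser "pick a point $y_z\in B(\theta,n)$."
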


\begin{proof}
  Let $f\in\mathcal{M}$ and $a\in (0,1)$. We set
  \[
    r_0 := 1, \qquad \alpha := \frac{a(1-\lambda)}{2^{9}C_\phi}\phi^{-1}\left(\frac{1}{n}\right)
  \]
  and note that $\alpha\in(0,1)$. Let $r \in (0,r_0)$ and set $\varepsilon:=\frac{r}{C_\phi}$. Let $g$ be given by the previous lemma. By Lemma~\ref{lem:Local1} we have $\rho(f(x),g(x)) < \frac{3r}{4C_\phi}\max\{\rho(x,\theta),1\}$ and hence
  \[
    d_{\theta,\phi}(f,g) \leq \sum_{n=1}^{\infty} \phi^{-1}\left(\tfrac{1}{n}\right) \frac{3r n}{4C_\phi} < r.
  \]
  For every $z\in\Gamma\cap B(\theta, n)$ pick a point $y_z\in B(\theta,n)$ such that $\rho(z,y_z) = \frac{r a }{2^{6}C_{\phi}}$ and observe that in particular $y_z \in B\left(z, \frac{\varepsilon a}{32} \right)$.

  Now let $h\in\mathcal{M}$ with $d_{\theta,\phi}(h,g) < \alpha r$, since $\alpha < \frac{1}{2} \phi^{-1}\left(\frac{1}{n} \right)$, we obtain from the triangle inequality together with Lemma~\ref{lem:localAndGlobalDistance} that
  \begin{align*}
    \rho(h(y_z), h(z)) & \geq \rho(g(y_z), g(z)) - \rho(g(y_z),h(y_z)) - \rho(g(z),h(z))\\ & \geq \rho(g(y_z), g(z)) - \frac{4\alpha r}{\phi^{-1}\left(\frac{1}{n}\right)} = \rho(y_z,z) - (1-\lambda)\frac{ra2^{-7}}{C_{\phi}} \\ &= \rho(y_z,z)\left(1-\frac{(1-\lambda)}{2}\right) = \frac{1+\lambda}{2} \rho(y_z,z)
  \end{align*}
  and infer that
  \[
    \inf_{x\in\Gamma\cap B(\theta,n)} \Lip(h,x,a) \geq \frac{1+\lambda}{2}  >\lambda
  \]
  which implies that $h \not\in \mathcal{N}_{\lambda, \Gamma, a, n}$.
\end{proof}

\begin{lemma}
  Let $a\in(0,1)$, $\Gamma \subset X$ be $a$-separated and $\lambda\in(0,1)$.
  Furthermore let $n \in \mathbb{N}$ such that $\Gamma\cap B(\theta,n)$ is nonempty and non-singleton. Then
  \[
    \mathcal{N}_{\lambda, \Gamma, a, n} :=\{ f\in\mathcal{M} \colon \inf_{x\in\Gamma\cap B(\theta, n)} \Lip(f,x,a) \leq \lambda\}.
  \]
  is a porous subset of $(\mathcal{M}, d_{\theta,s})$.
\end{lemma}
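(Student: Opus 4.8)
The plan is to repeat the argument for the metric $d_{\theta,\phi}$ almost verbatim, with $d_{\theta,s}$ in place of $d_{\theta,\phi}$: the estimate of Lemma~\ref{lem:localAndGlobalDistance} is replaced by the one from Lemma~\ref{lem:sLocalToGlobal}, and instead of $\phi$-porosity we verify $\psi_s$-porosity via Lemma~\ref{lem:PorousMichael}, recalling that $\psi_s$ satisfies its hypotheses and that $\psi_s^{-1}(t)=t^s$. Since hyperbolic spaces contain no isolated points, it suffices, for an arbitrary $f\in\mathcal{M}$, to exhibit $r_0>0$ and $\alpha\in(0,1)$ so that for every $r\in(0,r_0)$ there is a $g\in\mathcal{M}$ with $d_{\theta,s}(f,g)\leq r$ and $B_{d_{\theta,s}}\bigl(g,(\alpha r)^s\bigr)\cap\mathcal{N}_{\lambda,\Gamma,a,n}=\emptyset$.

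First I would fix $f\in\mathcal{M}$, set $r_0:=1$ and
\[
  \alpha:=\left(\frac{a(1-\lambda)}{2^{8}(1+n^s)}\right)^{1/s}\in(0,1),
\]
the membership in $(0,1)$ being clear since the bracketed term is smaller than $1$ and $s\geq1$. For $r\in(0,r_0)$ I would apply Lemma~\ref{lem:Local1} with $\varepsilon:=r$, obtaining a $g\in\mathcal{M}$ which restricts to an isometry on every ball $\bar{B}(z,\tfrac{ra}{32})$ with $z\in\Gamma$ and satisfies $\rho(f(x),g(x))<\tfrac{3r}{4}\max\{1,\rho(x,\theta)\}$ for all $x\in X$. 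Since $\max\{1,t\}\leq 1+t^{s}$ for $t\geq 0$ and $s\geq 1$, dividing by $1+\rho(x,\theta)^s$ and taking the supremum gives $d_{\theta,s}(f,g)\leq\tfrac{3r}{4}<r$.

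For each $z\in\Gamma\cap B(\theta,n)$ I would then pick $y_z$ on the metric segment joining $z$ to $\theta$ (on an arbitrary segment issuing from $z$ if $z=\theta$) with $\rho(z,y_z)=\tfrac{ra}{2^{6}}$; this guarantees $\rho(\theta,y_z)\leq\rho(\theta,z)<n$ and $\rho(z,y_z)\leq\tfrac{ra}{32}$, so that $\rho(g(y_z),g(z))=\rho(y_z,z)$ by Lemma~\ref{lem:Local1}. If $h\in\mathcal{M}$ satisfies $d_{\theta,s}(g,h)\leq(\alpha r)^s$, then Lemma~\ref{lem:sLocalToGlobal} yields $\rho(g(w),h(w))\leq(1+n^s)(\alpha r)^s$ for $w\in\{z,y_z\}$, and, using $r^s\leq r$ on $(0,1]$ together with the choice of $\alpha$, one obtains
\[
  \rho(h(y_z),h(z))\geq\rho(y_z,z)-2(1+n^s)(\alpha r)^s\geq\rho(y_z,z)-\tfrac{1-\lambda}{2}\rho(y_z,z)=\tfrac{1+\lambda}{2}\rho(y_z,z).
\]
Since $\rho(y_z,z)<a$, this forces $\Lip(h,z,a)\geq\tfrac{1+\lambda}{2}>\lambda$, and as $z\in\Gamma\cap B(\theta,n)$ was arbitrary, $\inf_{x\in\Gamma\cap B(\theta,n)}\Lip(h,x,a)>\lambda$, i.e. $h\notin\mathcal{N}_{\lambda,\Gamma,a,n}$. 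Hence $B_{d_{\theta,s}}(g,(\alpha r)^s)$ misses $\mathcal{N}_{\lambda,\Gamma,a,n}$, and Lemma~\ref{lem:PorousMichael} gives the claimed $\psi_s$-porosity.

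The substantive work is all contained in Lemma~\ref{lem:Local1}, so no genuine obstacle remains; the one feature specific to this metric is that the radius of the excluded ball is $(\alpha r)^s$ rather than $\alpha r$, which is precisely what the elementary inequalities $r^s\leq r$ and $\max\{1,t\}\leq 1+t^s$ (valid for $r\in(0,1]$, $t\geq0$, $s\geq1$) absorb. Tracking the numerical constants is routine.
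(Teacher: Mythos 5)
The plan is sound, and the substantive construction (invoking Lemma~\ref{lem:Local1} with $\varepsilon = r$, picking $y_z$ near each $z\in\Gamma\cap B(\theta,n)$, and using Lemma~\ref{lem:sLocalToGlobal} to pass from $d_{\theta,s}$ to pointwise estimates at $z$ and $y_z$) matches the paper's argument. But there is a genuine gap: you prove $\psi_s$-porosity, whereas the lemma asserts \emph{porosity}, i.e.\ $\phi$-porosity with $\phi(t)=t$, which is strictly stronger. You took over the $\psi_s$ machinery from Theorems~\ref{thm:BallIntoBall2} and~\ref{thm:modcont2}, but there it is genuinely needed because the relevant radius (e.g.\ $M_f$) scales like $1/r$, so the factor $1+\rho(\cdot,\theta)^s$ in Lemma~\ref{lem:sLocalToGlobal} introduces a power of $r$ that forces the excluded ball to shrink like $r^s$. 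Here that does not happen: both $z$ and $y_z$ lie in the \emph{fixed} ball $B(\theta,n)$, with $n$ a parameter of the set $\mathcal{N}_{\lambda,\Gamma,a,n}$ and entirely independent of $r$. Hence $1+n^s$ is a constant that can be absorbed directly into $\alpha$ without any $s$-th power.

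Concretely, the paper sets $\alpha := \frac{a(1-\lambda)2^{-8}}{1+n^s}$ (no $1/s$ exponent) and excludes the ball $B_{d_{\theta,s}}(g,\alpha r)$. If $d_{\theta,s}(g,h)<\alpha r$, then Lemma~\ref{lem:sLocalToGlobal} gives $\rho(g(w),h(w))\le (1+n^s)\alpha r = \frac{a(1-\lambda)}{2^8}r$ for $w\in\{z,y_z\}$, and the same chain of inequalities you wrote goes through verbatim to yield $\rho(h(y_z),h(z))\ge \frac{1+\lambda}{2}\rho(y_z,z)$, without ever needing $r^s\le r$ or $\max\{1,t\}\le 1+t^s$. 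This gives porosity in the standard sense and is what the proof of Theorem~\ref{thm:MaxLipConst} relies on: the theorem claims the exceptional set $\mathcal{N}$ is $\sigma$-porous (not merely $\sigma$-$\psi_s$-porous), so $\psi_s$-porosity of the pieces would not be enough.
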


\begin{proof}
  Let $f\in\mathcal{M}$ and $a\in (0,1)$. We set
  \[
    r_0:=1, \qquad \alpha := \frac{a(1-\lambda)2^{-8}}{1+n^s}
  \]
  and note that $\alpha\in(0,1)$. Given $r \in (0,r_0)$ we set $\varepsilon:=r$. Let $g$ be given by Lemma~\ref{lem:Local1}. This mapping satisfies $\rho(f(x),g(x)) < \frac{3}{4}r \max\{\rho(x,\theta),1\}$ and hence
  \[
    d_{\theta,s}(f,g) \leq \sup_{x\in X}  \frac{3r\max\{1,\rho(x,\theta)\}}{4(1+\rho(x,\theta)^s)} <  r.
  \]
  For every $z\in\Gamma\cap B(\theta, n)$ pick a point $y_z\in B(\theta,n)$ such that $\rho(z,y_z) = r a 2^{-6}$ and observe that in particular $y_z \in B\left(z, \frac{\varepsilon a}{32} \right)$.

  Now let $h\in\mathcal{M}$ with $d_{\theta,s}(h,g) < \alpha r$. Using the triangle inequality and Lemma~\ref{lem:sLocalToGlobal} we obtain that that
  \begin{align*}
    \rho(h(y_z), h(z)) & \geq \rho(g(y_z), g(z)) - 2\alpha r(1+n^s) = \rho(y_z,z) - ra2^{-7} (1-\lambda)\\ & = \rho(y_z,z)\left(1-\frac{(1-\lambda)}{2}\right) = \frac{1+\lambda}{2} \rho(y_z,z)
  \end{align*}
  and infer that
  \[
    \inf_{x\in\Gamma\cap B(\theta,n)} \Lip(h,x,a) \geq \frac{1+\lambda}{2}  >\lambda
  \]
  which implies that $h \not\in \mathcal{N}_{\lambda, \Gamma, a, n}$.
\end{proof}

The following lemma can be proved completely analogously to the corresponding one in~\cite{Dymond2021}. Since the proof is quite short, we include it nevertheless to keep this article self-contained.

\begin{lemma}
  Let $f\in\mathcal{M}$. Then the set
  \[
    R(f) := \{x\in X \colon \Lip(f,x) = 1\}
  \]
  is a $G_\delta$ subset of $X$.
\end{lemma}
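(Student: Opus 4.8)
The plan is to exhibit $R(f)$ directly as a countable intersection of open sets. The starting point is that, since $f$ is nonexpansive, $\Lip(f,x)\le 1$ for every $x\in X$, so $R(f)=\{x\in X\colon \Lip(f,x)\ge 1\}$. Moreover, for fixed $x$ the function $r\mapsto\Lip(f,x,r)$ is non-decreasing, so its limit as $r\to 0^+$ is its infimum over $r>0$, and in fact $\Lip(f,x)=\inf_{m\in\mathbb{N}}\Lip(f,x,\tfrac1m)$. Hence $\Lip(f,x)\ge 1$ if and only if $\Lip(f,x,\tfrac1m)\ge 1$ for every $m\in\mathbb{N}$, and unravelling the definition of the supremum in $\Lip(f,x,\tfrac1m)$, this holds precisely when for all $m,n\in\mathbb{N}$ there is some $y\in X$ with $0<\rho(x,y)<\tfrac1m$ and $\rho(f(x),f(y))>\bigl(1-\tfrac1n\bigr)\rho(x,y)$.

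With this reformulation in hand, I would set
\[
  U_{n,m}:=\Bigl\{x\in X\colon \exists\, y\in X \text{ with } 0<\rho(x,y)<\tfrac1m \text{ and } \rho(f(x),f(y))>\bigl(1-\tfrac1n\bigr)\rho(x,y)\Bigr\},
\]
so that $R(f)=\bigcap_{n\in\mathbb{N}}\bigcap_{m\in\mathbb{N}}U_{n,m}$, and the whole argument reduces to checking that each $U_{n,m}$ is open. So let $x\in U_{n,m}$, witnessed by a point $y$, and put $\eta:=\rho(f(x),f(y))-\bigl(1-\tfrac1n\bigr)\rho(x,y)>0$. For a competitor $x'$ I would keep the same $y$ and estimate, using the triangle inequality and the nonexpansiveness of $f$, that $\rho(x',y)\le\rho(x,y)+\rho(x,x')$ and $\rho(f(x'),f(y))\ge\rho(f(x),f(y))-\rho(x,x')$, whence
\[
  \rho(f(x'),f(y))-\bigl(1-\tfrac1n\bigr)\rho(x',y)\ \ge\ \eta-\bigl(2-\tfrac1n\bigr)\rho(x,x').
\]
Choosing $\rho(x,x')$ small enough — also smaller than $\min\{\rho(x,y),\tfrac1m-\rho(x,y)\}$, which guarantees $0<\rho(x',y)<\tfrac1m$ — makes the right-hand side positive, so $x'\in U_{n,m}$. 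Thus $U_{n,m}$ contains a ball around each of its points and is therefore open.

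Since a non-singleton geodesic metric space has no isolated points, $\Lip(f,x,r)$ is always a supremum over a nonempty set, so there are no degenerate cases; and even without this, the convention $\sup\emptyset=-\infty$ only makes such points fall outside both $R(f)$ and every $U_{n,m}$, so the identity $R(f)=\bigcap_{n,m}U_{n,m}$ persists. I do not expect any genuine obstacle here: the only steps requiring a little care are the openness estimate for $U_{n,m}$ and the use of the monotonicity of $r\mapsto\Lip(f,x,r)$, which is exactly what licenses replacing the universal quantifier over all $r>0$ by a countable one. Assembling these observations shows that $R(f)$ is a countable intersection of open subsets of $X$, i.e.\ a $G_\delta$ set.
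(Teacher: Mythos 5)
Your proof is correct and follows essentially the same route as the paper: write $R(f)$ as the countable intersection $\bigcap_{n,m}U_{n,m}$ of open sets (the paper indexes over $\lambda\in\mathbb{Q}\cap(0,1)$ where you use $1-\tfrac1n$, a cosmetic difference), and observe that each set in the intersection is open. You additionally spell out the short openness estimate that the paper leaves to the reader, which is the right level of detail for a self-contained argument.
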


\begin{proof}
  It is enough to recognise that the set $R(f)$ may be written as
  \[
    \bigcap_{\lambda \in \mathbb{Q}\cap(0,1)} \bigcap_{n\in \mathbb{N}} \left\{x\in X \colon \exists y\in B(x,\tfrac{1}{n}) : \rho(f(y), f(x)) > \lambda \rho(y,x)\right\}
  \]
  because every set in the intersection is open in $X$.
\end{proof}

\begin{proof}[Proof of Theorem \ref{thm:MaxLipConst}]
  For each $j\in\mathbb{N}$ define $\Gamma_j$ as a maximal $2^{-j}$-separated subset of~$X$. Such a maximal $2^{-j}$-separated subset exists by the Kuratowski–Zorn lemma since the $2^{-j}$-separated subsets are partially ordered by inclusion and for chains the union defines a maximal element. We set $a_{j,k} = 2^{-j-k}$ and note that for every $k\in\mathbb{N}$ every $\Gamma_j$ is also $a_{j,k}$-separated. We define the subset $\mathcal{N}\subset \mathcal{M}$ by
  \[
    \mathcal{N} : = \bigcup_{n\in\mathbb{N}} \;\bigcup_{\lambda \in \mathbb{Q} \cap (0,1)}\; \bigcup_{j\in\mathbb{N}} \;\bigcup_{k\in \mathbb{N}}  \mathcal{N}_{\lambda, \Gamma_j, a_{j,k}, n}
  \]
  where the sets $\mathcal{N}_{\lambda, \Gamma, a_{j,k}, n}$ are defined as in the previous lemmas. The lemmas of this section imply that $\mathcal{N}$ is a $\sigma$-porous set and we are left to show that for $f\in \mathcal{M}\setminus \mathcal{N}$ the set $R(f)$ is residual. Since $R(f)$ is a  $G_\delta$ subset of $X$ it suffices to show that it is dense. More precisely, we show that the dense subset $\bigcup_{j\in\mathbb{N}} \Gamma_j$ is contained in $R(f)$, which in turn implies that $R(f)$ is dense in $X$. For a given $f\in\mathcal{M}\setminus \mathcal{N}$ fix $j\in\mathbb{N}$ and let $x\in\Gamma_j$. We choose $n\in\mathbb{N}$ large enough so that $x\in B(\theta,n)$ and deduce from $f \not\in \bigcup_{\lambda \in \mathbb{Q} \cap (0,1)} \bigcup_{k\in \mathbb{N}}  \mathcal{N}_{\lambda, \Gamma_j, a_{k,j}, n}$ that
  \[
    \Lip(f,x,a_{j,k}) \geq \inf_{z\in\Gamma_j\cap B(\theta, n)} \Lip(f,z,a_{j,k}) > \lambda
  \]
  for every $\lambda\in (0,1)$ and every $k\in\mathbb{N}$, and hence
  \[
    \Lip(f,x) = 1,
  \]
  which shows that $x\in R(f)$.
\end{proof}

\subsection{On a metric of pointwise convergence}
Let $X$ be a complete \emph{separable} hyperbolic space. Recall that the limit of a pointwise convergent sequence of nonexpansive mappings is again a nonexpansive mapping; see e.g. Proposition~1.8 in~\cite[p.~6]{Weaver}. Since $X$ is separable, we may pick a dense sequence $(z_n)_{n=1}^\infty$ in~$X$. We again denote by~$\mathcal{M}$ the space of all nonexpansive self-mappings of $X$ and define on it the metric
\[
  d_z(f,g) = \sum_{n=1}^{\infty} 2^{-n} \frac{\rho(f(z_n),g(z_n))}{1+\rho(f(z_n),g(z_n))},
\]
which is indeed a metric because a nonexpansive mapping has a unique extension from a dense subset. Since we also want to employ Cauchy sequences, in addition to the topology of pointwise convergence we consider a natural uniformity generating this topology. With this aim we consider the uniformity consisting of all supersets of sets of the form
\[
  U_{F,\varepsilon} := \left\{(f,g)\in\mathcal{M}^2\colon \rho(f(z),g(z)) < \varepsilon\;\text{for all}\;z\in F\right\}
\]
where $F\subset X$ is finite and $\varepsilon>0$. A neighbourhood base of an $f\in\mathcal{M}$ for the topology generated by this uniformity consists of the sets
\[
  U_{F,\varepsilon}(f) = \{g\in\mathcal{M}\colon \rho(f(z),g(z))<\varepsilon \;\text{for all}\;z\in F\}.
\]
This shows that this topology is indeed the topology of pointwise convergence.

\begin{proposition}
  A sequence in $\mathcal{M}$ converges for $d_z$ if and only if it converges pointwise. In particular $(\mathcal{M},d_z)$ is a complete metric space.
\end{proposition}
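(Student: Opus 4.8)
The plan is to prove the two implications in the equivalence ``$d_z$-convergent $\iff$ pointwise convergent'' for sequences, and then to derive completeness from the already recalled fact that a pointwise limit of nonexpansive maps is nonexpansive. No topological subtleties arise: the statement concerns sequences, $d_z$ is a genuine metric, and pointwise convergence of a sequence only involves the countably many conditions at the points $z_n$, so the two notions are directly comparable.

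First I would treat ``$d_z$-convergent $\Rightarrow$ pointwise''. If $d_z(f_k,f)\to 0$, then for each fixed $n$ we have $2^{-n}\frac{\rho(f_k(z_n),f(z_n))}{1+\rho(f_k(z_n),f(z_n))}\leq d_z(f_k,f)\to 0$; since $t\mapsto \frac{t}{1+t}$ is increasing with inverse $u\mapsto\frac{u}{1-u}$ on $[0,1)$, this forces $\rho(f_k(z_n),f(z_n))\to 0$, i.e.\ $f_k\to f$ pointwise on the dense set $\{z_n\}_{n\in\mathbb{N}}$. To pass to an arbitrary $x\in X$, given $\varepsilon>0$ I would choose $z_n$ with $\rho(x,z_n)<\varepsilon/4$ and use nonexpansiveness together with the triangle inequality in the form
\[
  \rho(f_k(x),f(x)) \leq \rho(f_k(x),f_k(z_n)) + \rho(f_k(z_n),f(z_n)) + \rho(f(z_n),f(x)) \leq 2\rho(x,z_n) + \rho(f_k(z_n),f(z_n)),
\]
which is less than $\varepsilon$ once $k$ is large enough that the middle term is below $\varepsilon/2$. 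Conversely, for ``pointwise $\Rightarrow$ $d_z$-convergent'', given $\varepsilon>0$ I would use $\frac{t}{1+t}\leq 1$ to pick $N$ with $\sum_{n>N}2^{-n}<\varepsilon/2$ and then $K$ so that $\rho(f_k(z_n),f(z_n))<\varepsilon/2$ for $k\geq K$ and $1\leq n\leq N$; splitting the series at $N$ then yields $d_z(f_k,f)<\varepsilon$ for $k\geq K$. This mirrors the arguments given above for $d_{\theta,\phi}$ and $d_{\theta,s}$.

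For completeness, let $(f_k)$ be $d_z$-Cauchy. The same inversion of $t\mapsto\frac{t}{1+t}$ shows that $(f_k(z_n))_k$ is Cauchy in $X$ for each $n$, and then the density estimate above---now with $f$ replaced by $f_\ell$ and $k,\ell\to\infty$---shows that $(f_k(x))_k$ is Cauchy for every $x\in X$. Completeness of $X$ yields a pointwise limit $f(x):=\lim_k f_k(x)$, and passing to the limit in $\rho(f_k(x),f_k(y))\leq\rho(x,y)$ shows $f\in\mathcal{M}$; since $f_k\to f$ pointwise, the equivalence just proved gives $d_z(f_k,f)\to 0$.

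I do not expect a genuine obstacle here; the one step requiring a little care is the passage from Cauchyness (or convergence) at the dense points $z_n$ to the same at every $x\in X$, which relies on the uniform $1$-Lipschitz bound shared by all the $f_k$---the very property that also makes $d_z$ well defined and separating.
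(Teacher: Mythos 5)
Your proposal is correct and follows essentially the same route as the paper's own proof: extracting convergence at the dense points $z_n$ by inverting $t\mapsto\frac{t}{1+t}$, extending to all of $X$ via nonexpansiveness and density, and handling the converse by truncating the series. The one place you go further is that you spell out the completeness argument in full, whereas the paper merely remarks that ``a similar argument shows'' the two uniformities share the same Cauchy sequences and relies on the earlier-quoted fact that a pointwise limit of nonexpansive mappings is nonexpansive; your explicit version is correct and a reasonable way to fill that gap.
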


\begin{proof}
  We start by showing that every sequence which is convergent for the above metric is pointwise convergent. To this end, let $x\in X$, $f\in \mathcal{M}$ and consider a sequence $(f_k)_{k=1}^{\infty}$ in~$\mathcal{M}$ which converges to $f$ with respect to the above metric. For every $\varepsilon>0$, we may pick a point $z_n$ with $\rho(x,z_n)<\frac{\varepsilon}{3}$. An argument similar to the one in the proof of Lemma~\ref{lem:localAndGlobalDistance} shows the existence of an $\alpha_n>0$ with the property that $d_z(f,g)<\alpha_n\varepsilon$ implies that $\rho(g(z_n),f(z_n))<\frac{\varepsilon}{3}$. Since $f_k\to f$ for the metric $d_z$, we may pick a $K\in\mathbb{N}$ such that $d_z(f_k,f)<\alpha_n\varepsilon$ for all $k\geq K$ and hence
  \[
    \rho(f_k(x),f(x)) \leq \rho(f_k(z_n),f(z_n)) + \frac{2\varepsilon}{3} < \varepsilon
  \]
  for all $k\geq K$. This shows that every $d_z$-convergent sequence is pointwise convergent.

  To prove the converse, assume that $f_k\to f$ pointwise. Given $\varepsilon>0$ we pick an $N\in\mathbb{N}$ such that
  \[
    \sum_{n=N}^{\infty} 2^{-n} < \frac{\varepsilon}{2}.
  \]
  Since $f_k\to f$ pointwise there is a $K\in\mathbb{N}$ with the property that $\rho(f_k(z_n),f(z_n)) < \frac{\varepsilon}{2}$ for $n=1,\ldots, N-1$ and all $k\geq K$. Combining these inequalities, we obtain
  \[
    d_z(f_k,f) \leq \sum_{n=1}^{N-1} 2^{-n} \rho(f_k(z_n),f(z_n)) + \sum_{n=N}^{\infty} 2^{-n} < \varepsilon
  \]
  for $k\geq K$.

  These arguments show that the topology of pointwise convergence and the one generated by $d_z$ have the same convergent sequences and a similar argument shows that the uniformity considered above has the same Cauchy sequences as the uniformity generated by $d_z$.
\end{proof}

\begin{remark}\label{rem:ptw}
  We make two observations concerning this proof for further reference.
  \begin{enumerate}
  \item A simple computation in the spirit of the above proof shows that the metric $d_z$ generates the topology of pointwise convergence on~$X$.
  \item In the above proof, we used that an argument similar to the one in the proof of Lemma~\ref{lem:localAndGlobalDistance} shows that for every $N\in\mathbb{N}$ there is an $\alpha_N>0$ with the property that for every  $\varepsilon\in(0,1)$, $d_z(f,g)\leq \alpha_N\varepsilon$ implies that $\rho(f(z_n),g(z_n))) < \varepsilon$ for all $n \leq N$.
  \end{enumerate}
\end{remark}

\begin{proposition}\label{prop:StrContrDensePW}
  The set of strict contractions is a dense subset of the space~$(\mathcal{M}, d_z)$.
\end{proposition}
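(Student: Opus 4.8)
The plan is to mimic the proofs of Propositions~\ref{prop:StrContrDense} and~\ref{prop:StrictContrDenseFords}. Fix any $\theta\in X$ (for instance $\theta=z_1$). Given $f\in\mathcal{M}$ and $\gamma\in(0,1)$, I would again set
\[
  f_\gamma(x):=(1-\gamma)f(x)\oplus\gamma f(\theta),\qquad x\in X.
\]
By Proposition~\ref{prop:StrContrDense} we already know that $\Lip f_\gamma\leq 1-\gamma<1$, so $f_\gamma$ is a strict contraction; hence it only remains to show that $f_\gamma$ can be made arbitrarily $d_z$-close to $f$ by taking $\gamma$ small.

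For this I would first record the pointwise estimate
\[
  \rho(f(z_n),f_\gamma(z_n))=\rho\big(f(z_n),(1-\gamma)f(z_n)\oplus\gamma f(\theta)\big)=\gamma\,\rho(f(\theta),f(z_n))\leq\gamma\,\rho(\theta,z_n),
\]
which follows from~\eqref{eq:HypLinComb} and the nonexpansiveness of $f$, exactly as in the proof of Proposition~\ref{prop:StrContrDense}. Unlike in the metrics $d_{\theta,\phi}$ and $d_{\theta,s}$, the weights $\rho(\theta,z_n)$ here are unbounded, so one cannot bound $d_z(f,f_\gamma)$ by a fixed multiple of $\gamma$; instead one uses that only finitely many terms of the defining series matter up to a prescribed error. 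Concretely, given $\varepsilon>0$, pick $N\in\mathbb{N}$ with $\sum_{n=N+1}^{\infty}2^{-n}=2^{-N}<\varepsilon/2$, and then, using $\tfrac{a}{1+a}\leq a$ and $\tfrac{a}{1+a}\leq 1$ for $a\geq 0$,
\[
  d_z(f,f_\gamma)\leq\sum_{n=1}^{N}2^{-n}\rho(f(z_n),f_\gamma(z_n))+\sum_{n=N+1}^{\infty}2^{-n}\leq\gamma\sum_{n=1}^{N}2^{-n}\rho(\theta,z_n)+2^{-N}.
\]
Choosing $\gamma\in(0,1)$ small enough that $\gamma\sum_{n=1}^{N}2^{-n}\rho(\theta,z_n)<\varepsilon/2$ then yields $d_z(f,f_\gamma)<\varepsilon$, which proves that $f$ lies in the $d_z$-closure of the set of strict contractions.

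There is no genuine obstacle here; the only point worth stressing is that the argument is less uniform than in the bounded-set setting, since the admissible $\gamma$ depends not only on $\varepsilon$ but, through $N$ and the points $z_1,\dots,z_N$, on $f$ as well — this is of course harmless for a density statement. As an alternative to truncating the series, one could bound $d_z(f,f_\gamma)\leq\sum_{n=1}^{\infty}2^{-n}\min\{1,\gamma\rho(\theta,z_n)\}$ and let $\gamma\to 0^+$, invoking dominated convergence; both routes are entirely routine.
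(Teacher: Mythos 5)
Your proof is correct and follows essentially the same route as the paper: use the convex combination $f_\gamma(x)=(1-\gamma)f(x)\oplus\gamma f(\theta)$, split the series defining $d_z$ into a finite head and a small tail, choose $N$ to control the tail and then $\gamma$ to control the head. The only cosmetic difference is that you bound the head via $\gamma\sum_{n\le N}2^{-n}\rho(\theta,z_n)$, whereas the paper chooses $\gamma$ so that each of the finitely many pointwise displacements $\rho(f(z_n),f_\gamma(z_n))$, $n<N$, is already $<r/2$; both choices are immediate and equivalent.
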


\begin{proof}
  Given $r>0$, we pick an arbitrary point $\theta\in X$ and an $N\in\mathbb{N}$ with
  \[
    \sum_{n=N}^{\infty} 2^{-n} < \frac{r}{2}.
  \]
  We now choose a $\gamma\in (0,1)$ so that
  \[
    \rho((1-\gamma) f(z_n) \oplus \gamma f(\theta), f(z_n)) < \frac{r}{2}
  \]
  for $n=1,\ldots, N-1$. Using the hyperbolicity of $X$ together with the above bounds, we obtain that the mapping defined by
  \[
    f_\gamma (x)=(1-\gamma)f(x)\oplus \gamma f(\theta)
  \]
  is a $(1-\gamma)$-Lipschitz mapping the distance of which to~$f$ is at most~$r$.
\end{proof}

If we assume $X$ to be in addition \emph{unbounded}, similarly to the case of the metric of uniform convergence on bounded subsets, we obtain that for the metric $d_z$ the typical nonexpansive mapping is not a Rakotch contraction.

As in Section~\ref{sec:Rakotch}, for $\mu\in (0,1)$ and $t_0>0$, we set
\[
  \mathcal{N}_{\mu,t_0} := \{f\in\mathcal{M}\colon \omega_f(t_0) \leq \mu t_0\}
\]
and prove the following theorem.

\begin{theorem}
  For every $t_0>0$ and every $\mu\in(0,1)$, the set $\mathcal{N}_{\mu,t_0}$ is nowhere dense in~$(\mathcal{M}, d_z)$.
\end{theorem}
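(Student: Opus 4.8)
The plan is to prove that $\mathcal{N}_{\mu,t_0}$ is a \emph{closed} subset of $(\mathcal{M},d_z)$ with empty interior; since a closed set is nowhere dense exactly when its interior is empty, this suffices. For closedness I use that, by Remark~\ref{rem:ptw}, the metric $d_z$ induces the topology of pointwise convergence, so it is enough to check sequential closedness: if $f_k\to f$ pointwise with $f_k\in\mathcal{N}_{\mu,t_0}$ for all $k$, then for any $x,y\in X$ with $\rho(x,y)\le t_0$ we have $\rho(f(x),f(y))=\lim_{k\to\infty}\rho(f_k(x),f_k(y))\le\mu t_0$, and taking the supremum over such pairs gives $\omega_f(t_0)\le\mu t_0$, i.e. $f\in\mathcal{N}_{\mu,t_0}$.

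For the emptiness of the interior, let $f\in\mathcal{M}$ and $\varepsilon>0$ be given; I produce $g\in\mathcal{M}$ with $d_z(f,g)<\varepsilon$ and $\omega_g(t_0)>\mu t_0$, which shows that no $d_z$-ball is contained in $\mathcal{N}_{\mu,t_0}$. Fix $\theta\in X$ (say $\theta=z_1$) and choose $N\in\mathbb{N}$ with $\sum_{n=N}^{\infty}2^{-n}<\varepsilon/2$. By Proposition~\ref{prop:StrContrDensePW} pick a strict contraction $\tilde f\in\mathcal{M}$ with $d_z(f,\tilde f)<\varepsilon/2$ and $\Lip\tilde f\le 1-\gamma$ for some $\gamma\in(0,1)$. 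Choose $R>0$ so large that $z_1,\dots,z_{N-1}\in B(\theta,R)$, and apply Lemma~\ref{lem:LargeModCont} to $\tilde f$ with centre $z=\theta$, radius $R$, the given $t_0$ and $\gamma$, and $\lambda=\tfrac{1+\mu}{2}\in(\mu,1)$. This yields points $x_0,y_0\in X$ with $\rho(x_0,y_0)=t_0$ and a nonexpansive mapping $g$ with $g=\tilde f$ on $B(\theta,R)$ and $\rho(g(x_0),g(y_0))>\tfrac{1+\mu}{2}t_0>\mu t_0$. In particular $g(z_n)=\tilde f(z_n)$ for $n=1,\dots,N-1$, so
\[
  d_z(\tilde f,g)=\sum_{n=N}^{\infty}2^{-n}\frac{\rho(\tilde f(z_n),g(z_n))}{1+\rho(\tilde f(z_n),g(z_n))}\le\sum_{n=N}^{\infty}2^{-n}<\frac{\varepsilon}{2},
\]
whence $d_z(f,g)\le d_z(f,\tilde f)+d_z(\tilde f,g)<\varepsilon$. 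On the other hand $\rho(x_0,y_0)=t_0$ forces $\omega_g(t_0)\ge\rho(g(x_0),g(y_0))>\mu t_0$, so $g\notin\mathcal{N}_{\mu,t_0}$, as required.

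I expect no deep obstacle here: the argument essentially recycles Proposition~\ref{prop:StrContrDensePW} and Lemma~\ref{lem:LargeModCont}, exploiting the fact that proximity in $d_z$ only constrains the values of a mapping at the finitely many points $z_1,\dots,z_{N-1}$, which can be preserved by modifying $\tilde f$ outside a sufficiently large ball containing them. The one point deserving attention is the verification that $\mathcal{N}_{\mu,t_0}$ is genuinely closed for pointwise convergence, since this is precisely what lets one deduce nowhere density from the emptiness of the interior; without it one would instead have to argue directly that the \emph{closure} of $\mathcal{N}_{\mu,t_0}$ has empty interior, which would require showing that the $g$ constructed above admits a whole $d_z$-neighbourhood avoiding $\mathcal{N}_{\mu,t_0}$.
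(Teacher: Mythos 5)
Your proof is correct, and it takes a genuinely different route to nowhere density than the paper does. The paper does not observe closedness of $\mathcal{N}_{\mu,t_0}$; instead it shows directly that every ball $B_{d_z}(f,r)$ contains a sub-ball $B_{d_z}(g,\delta)$ disjoint from $\mathcal{N}_{\mu,t_0}$. To exhibit the inner radius~$\delta$ it has to work a little harder: after producing $g$ (using the same ingredients you use, namely Proposition~\ref{prop:StrContrDensePW} plus Lemma~\ref{lem:LargeModCont}), it picks $z_{n_1},z_{n_2}$ from the dense sequence close to $x_0,y_0$, invokes Remark~\ref{rem:ptw} to get a $\delta>0$ controlling $\rho(g(z_{n_i}),h(z_{n_i}))$ for all $h$ in $B_{d_z}(g,\delta)$, and then runs a triangle-inequality chain through $z_{n_1},z_{n_2}$ to deduce $\rho(h(x_0),h(y_0))>\mu t_0$. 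Your alternative --- noting that $\mathcal{N}_{\mu,t_0}$ is closed because pointwise convergence preserves the constraint $\rho(f(x),f(y))\le\mu t_0$ for each pair $(x,y)$, and then only proving density of the complement --- removes this last step entirely, and also lets you bound the tail of $d_z(\tilde f,g)$ by $\sum_{n\ge N}2^{-n}$ directly rather than via $4t_0\,2^{-N}$. What the paper's approach buys is that it does not rely on $\mathcal{N}_{\mu,t_0}$ being closed (a feature specific to $\omega_f$ being defined via a sup of pointwise-continuous quantities) and is structurally parallel to the $\phi$-porosity arguments elsewhere in the paper, where one must find an actual sub-ball. You correctly identify this trade-off yourself in your closing paragraph.
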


\begin{proof}
  Given $r\in(0,1)$ and $f\in\mathcal{M}$, we use Proposition~\ref{prop:StrContrDensePW} to obtain a strict contraction $\tilde{f}$ with $d_z(f,\tilde{f})<\frac{r}{2}$. We pick a large enough $N\in\mathbb{N}$ such that $2^{-N} < \frac{r}{8t_0}$ and a large enough radius $R_N\geq t_0$ such that $z_1,\ldots,z_{N-1}\in B(z_1,R_N)$. Using Lemma~\ref{lem:LargeModCont} for $\tilde{f}$, $z=z_1$, $R=R_N$ and $\lambda=\frac{1+\mu}{2}$, we obtain a nonexpansive mapping $g\colon X\to X$ which coincides with~$\tilde{f}$ on $B(z_1,R_N)$, satisfies the inequality $\rho(\tilde{f}(x),g(x)) \leq 2t_0$ otherwise and for which $\rho(g(x_0),g(y_0))> \frac{1+\mu}{2}t_0$.
  This allows us to conclude that
  \[
    d_z(f,g) \leq d_z(f,\tilde{f}) + \sum_{n=N}^{\infty} 2^{-n} \frac{\rho(\tilde{f}(z_n),g(z_n))}{1+\rho(\tilde{f}(z_n),g(z_n))} \leq \frac{r}{2} + 4t_0 2^{-N} < r.
  \]
  We set $\varepsilon :=\frac{t_0(1-\mu)}{8}$ and use the fact that the sequence $(z_n)_{n=1}^{\infty}$ is dense in $X$ to pick points $z_{n_1}$ and $z_{n_2}$ with $\rho(x_0,z_{n_1})<\frac{\varepsilon}{2}$ and $\rho(y_0,z_{n_2})<\frac{\varepsilon}{2}$. We pick an $\alpha>0$ such that $d_z(g,h)<\alpha\varepsilon=:\delta$ implies that $\rho(g(z_{n_1}),h(z_{n_1}))<\varepsilon$ and $\rho(g(z_{n_2}),h(z_{n_2}))<\varepsilon$; see Remark~\ref{rem:ptw}. In other words, for $h\in B(g,\delta)$ we have
  \begin{align*}
    \rho(h(x_0),h(y_0)) & \geq \rho(g(x_0),g(y_0)) - \rho(g(x_0),g(z_{n_1})) - \rho(g(z_{n_1}),h(z_{n_1})) - \rho(h(x_0),h(z_{n_1})) \\
                        & \qquad - \rho(g(y_0),g(z_{n_2})) - \rho(g(z_{n_2}),h(z_{n_2})) - \rho(h(y_0),h(z_{n_2}))\\
    & \geq \rho(g(x_0),g(y_0)) - 4 \frac{\varepsilon}{2} - 2 \varepsilon > \frac{1+\mu}{2}t_0 -  \frac{1-\mu}{2}t_0 = \mu t_0,
  \end{align*}
  that is, $h\not\in\mathcal{N}_{\mu,t_0}$. In other words, we have shown that for every $r\in(0,1)$ and every $f\in \mathcal{M}$ there are a $g\in B_{d_z}(f,r)$ and a $\delta>0$ such that $B_{d_z}(g,\delta)\subset \mathcal{M} \setminus \mathcal{N}_{\mu,t_0}$.
\end{proof}

The proofs of the following corollaries are completely analogous to the ones of their counterparts in Section~\ref{sec:Rakotch}.

\begin{corollary}
  The set $\{f\in\mathcal{M}\colon \omega_f(t) < t\;\text{for some}\;t>0\}$ is meagre in $(\mathcal{M}, d_z)$.
\end{corollary}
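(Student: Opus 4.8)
The plan is to follow, almost verbatim, the pattern of the corresponding corollary in Section~\ref{sec:Rakotch}, simply replacing ``$\sigma$-$\phi$-porous'' by ``meagre'' and using the nowhere density of the sets $\mathcal{N}_{\mu,t_0}$ established in the theorem just proved in place of their porosity. As a preliminary observation I would record two elementary facts valid for every $f\in\mathcal{M}$: the modulus of continuity $t\mapsto\omega_f(t)$ is an increasing function, and $\omega_f(t)\le t$ for all $t\ge 0$ because $f$ is nonexpansive.

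Next I would establish the inclusion
\[
  \{f\in\mathcal{M}\colon \omega_f(t)<t\ \text{for some}\ t>0\} \subset \bigcup_{q\in\mathbb{Q}_{+}}\;\bigcup_{n\in\mathbb{N}} \mathcal{N}_{\frac{n-1}{n},q}.
\]
Indeed, if $\omega_f(t)<t$ for some $t>0$, I would pick a positive rational $q$ with $\omega_f(t)<q<t$; then monotonicity of $\omega_f$ gives $\omega_f(q)\le\omega_f(t)<q$, so $\omega_f(q)\le\frac{n-1}{n}q$ for some $n\in\mathbb{N}$, that is, $f\in\mathcal{N}_{\frac{n-1}{n},q}$. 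By the preceding theorem each $\mathcal{N}_{\frac{n-1}{n},q}$ is nowhere dense in $(\mathcal{M},d_z)$, so the set on the right-hand side is a countable union of nowhere dense sets and hence meagre; since a subset of a meagre set is meagre, the claim follows.

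For completeness I would add the remark, exactly as in the earlier corollary, that the complement of this set consists precisely of the mappings $f$ with $\omega_f(t)=t$ for all $t\ge 0$: one first gets $\omega_f(q)=q$ for every positive rational $q$ (as $\omega_f(q)\le q$ always and $\omega_f(q)<q$ is excluded), and then for irrational $t$ one sandwiches $t$ between rational sequences from below and above and uses monotonicity of $\omega_f$ together with the rational case to force $\omega_f(t)=t$. In particular, since every Rakotch contraction $f$ satisfies $\omega_f(t)<t$ for every $t>0$ (for $t/2\le s\le t$ one has $\varphi_{f}(s)s\le\varphi_{f}(t/2)t<t$, and for $0\le s\le t/2$ trivially $\varphi_f(s)s\le t/2$, so $\omega_f(t)\le\max\{t/2,\varphi_f(t/2)t\}<t$), the set of Rakotch contractions is contained in the above meagre set and is therefore itself meagre in $(\mathcal{M},d_z)$. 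I do not expect any genuine obstacle here: all the substance already lies in the nowhere-density theorem that precedes it, and this corollary is a routine Baire-category wrap-up.
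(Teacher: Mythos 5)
Your proposal is correct and takes essentially the same approach as the paper: the paper explicitly states that this corollary is proved just like its counterpart in Section~\ref{sec:Rakotch}, namely by covering the set with the countable family $\bigcup_{q\in\mathbb{Q}_+}\bigcup_{n\in\mathbb{N}}\mathcal{N}_{\frac{n-1}{n},q}$ of nowhere dense sets, which is exactly your argument (you merely state the inclusion directly rather than via its contrapositive, and add a correct but optional verification that Rakotch contractions lie in the covered set).
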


\begin{corollary}
  The set of Rakotch contractions is meagre in $(\mathcal{M}, d_z)$.
\end{corollary}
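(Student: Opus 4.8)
The plan is to obtain this corollary as an immediate consequence of the preceding one, in exactly the way the corresponding statements for the metrics $d_{\theta,\phi}$ and $d_{\theta,s}$ in Section~\ref{sec:Rakotch} are deduced from their predecessors. The whole argument rests on a single set-theoretic inclusion: every Rakotch contraction lies in the set $\{f\in\mathcal{M}\colon\omega_f(t)<t\ \text{for some}\ t>0\}$, which the previous corollary identifies as a meagre subset of $(\mathcal{M},d_z)$. Since any subset of a meagre set is again meagre, this inclusion already finishes the proof.

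First I would verify the inclusion. Let $f\in\mathcal{M}$ be a Rakotch contraction, witnessed by a decreasing function $\varphi_f\colon[0,\infty)\to[0,1]$ with $\varphi_f(t)<1$ for $t>0$ and $\rho(f(x),f(y))\le\varphi_f(\rho(x,y))\,\rho(x,y)$ for all $x,y\in X$. Fix any $t_0>0$. For $x=y$ the quantity $\rho(f(x),f(y))$ vanishes, while for $x,y\in X$ with $s:=\rho(x,y)\in(0,t_0]$ I would distinguish the cases $s\le t_0/2$ and $t_0/2<s\le t_0$: in the first case $\rho(f(x),f(y))\le\varphi_f(s)\,s\le s\le t_0/2$, and in the second case the monotonicity of $\varphi_f$ yields $\rho(f(x),f(y))\le\varphi_f(s)\,s\le\varphi_f(t_0/2)\,t_0$. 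Taking the supremum over all admissible pairs gives $\omega_f(t_0)\le\max\{t_0/2,\varphi_f(t_0/2)\,t_0\}<t_0$, so $f$ indeed belongs to the set from the previous corollary.

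There is no genuine obstacle here, because once that corollary is in hand the argument is purely formal. The only point that needs a little care is extracting the \emph{strict} inequality $\omega_f(t_0)<t_0$: the pointwise bound $\varphi_f(\rho(x,y))\le1$ on its own yields merely $\omega_f(t_0)\le t_0$, and one genuinely has to split at $t_0/2$ and use that $\varphi_f$ is decreasing (so that $\varphi_f(s)\le\varphi_f(t_0/2)<1$ whenever $s>t_0/2$) in order to produce a bound strictly below $t_0$ that is uniform in $x$ and $y$. Everything else is the standard hereditary property of meagre sets, so no further input beyond the preceding corollary is required.
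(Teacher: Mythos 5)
Your proposal is correct and matches the paper's (implicit) argument: the paper reduces this corollary to the preceding one via the inclusion of Rakotch contractions into $\{f\in\mathcal{M}\colon\omega_f(t)<t\ \text{for some}\ t>0\}$, which is exactly what you verify, and your split at $t_0/2$ to obtain the strict bound $\omega_f(t_0)\le\max\{t_0/2,\varphi_f(t_0/2)t_0\}<t_0$ is precisely the standard way to make the implicit step rigorous.
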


\begin{remark}
  In contrast to the situation of Theorem~\ref{thm:modcont}, a direct transfer of the proof of Theorems~\ref{thm:BallIntoBall} and~\ref{thm:RakotchOnBoundedSubsetsIsSigmaPorous} to this setting of pointwise convergence does not seem to work. The main problem is that here finitely many summands of the metric are not enough to have control over the values of a nonexpansive mapping on a (non-compact) ball. More precisely, for the proof of a result corresponding to Theorems~\ref{thm:BallIntoBall}, we can use the same construction to obtain for every $f\in\mathcal{M}$ and every $\varepsilon>0$ a mapping $g\in\mathcal{M}$ with distance to $f$ of at most $\varepsilon$ and a constant $M_g$ with
  \[
    g(\bar{B}(\theta,M_g)) \subset \bar{B}(\theta, M_g)
  \]
  Now the problem is that for $h\in\mathcal{M}$ with $d_z(g,h)<\alpha\varepsilon$, we can only conclude that~$g(x)$ and~$h(x)$ are sufficiently close for finitely many~$x$, which is not enough to show that $h$ satisfies a condition similar to the one above.
\end{remark}

On a positive note in the direction of Rakotch contractivity we have the following observation.

\begin{theorem}\label{thm:PtwShrink}
  There is a set $\mathcal{F}\subset\mathcal{M}$ such that $\mathcal{M}\setminus \mathcal{F}$ is meagre in $(\mathcal{M}, d_z)$ and for every $f\in\mathcal{F}$ the set
  \[
    \{(x,y)\in X\times X\colon x\neq y \;\text{and}\;\rho(f(x),f(y)) < \rho(x,y)\}
  \]
  is a residual subset of $X\times X$.
\end{theorem}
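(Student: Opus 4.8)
The plan is to realise $\mathcal{F}$ as a countable intersection of open dense subsets of the complete metric space $(\mathcal{M},d_z)$. For $f\in\mathcal{M}$ put
\[
  D(f):=\bigl\{(x,y)\in X\times X\colon x\neq y \text{ and } \rho(f(x),f(y))<\rho(x,y)\bigr\}.
\]
Since $f$ is continuous, the map $(x,y)\mapsto\rho(f(x),f(y))-\rho(x,y)$ is continuous on $X\times X$, so $D(f)$ is the intersection of the open set $\{(x,y)\colon x\neq y\}$ with the open preimage of $(-\infty,0)$ under this map; in particular $D(f)$ is \emph{open} in $X\times X$. As $X\times X$ is a Baire space (it is a complete metric space), an open subset of it is residual if and only if it is dense, because the complement of an open dense set is closed and nowhere dense. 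Hence it suffices to produce a residual set $\mathcal{F}\subset\mathcal{M}$ such that $D(f)$ is dense in $X\times X$ for every $f\in\mathcal{F}$.

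Because $X$ is separable, $X\times X$ is second countable; fix a countable base $\{U_k\}_{k\in\mathbb{N}}$ of nonempty open subsets of $X\times X$, say consisting of products of balls centred at points of the fixed dense sequence $(z_n)$. For each $k$ set
\[
  \mathcal{E}_k:=\{f\in\mathcal{M}\colon D(f)\cap U_k\neq\emptyset\}.
\]
If every $\mathcal{E}_k$ is open and dense in $(\mathcal{M},d_z)$, then $\mathcal{F}:=\bigcap_{k\in\mathbb{N}}\mathcal{E}_k$ is residual, its complement is meagre, and each $f\in\mathcal{F}$ has $D(f)$ meeting every member of the base, hence $D(f)$ is dense and therefore, being open, residual in $X\times X$, which is the assertion. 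Density of $\mathcal{E}_k$ is essentially free from Proposition~\ref{prop:StrContrDensePW}: every nonempty open product set in $X\times X$ contains a pair $(x,y)$ with $x\neq y$ (the non-singleton geodesic space $X$ has no isolated points), and if $\tilde f$ is a strict contraction with $\Lip\tilde f\le 1-\gamma_0<1$, then $\rho(\tilde f(x),\tilde f(y))\le(1-\gamma_0)\rho(x,y)<\rho(x,y)$ for every such pair, so $\tilde f\in\mathcal{E}_k$ for all $k$; since the strict contractions are $d_z$-dense, so is each $\mathcal{E}_k$.

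The step requiring care — and the only genuine obstacle — is the openness of $\mathcal{E}_k$, since $d_z$-proximity only controls a mapping at the countably many points $z_n$, not on the whole (non-compact) space. Given $f\in\mathcal{E}_k$, the nonempty open set $D(f)\cap U_k$ contains a box $B(x_0,\varepsilon)\times B(y_0,\varepsilon)$; choosing $\varepsilon<\rho(x_0,y_0)/3$ makes the two balls disjoint, so we may pick terms $z_i\in B(x_0,\varepsilon)$ and $z_j\in B(y_0,\varepsilon)$ of the dense sequence with $z_i\neq z_j$, which gives $(z_i,z_j)\in D(f)\cap U_k$ with a strictly positive gap $\eta:=\rho(z_i,z_j)-\rho(f(z_i),f(z_j))$. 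Because $d_z$ generates the topology of pointwise convergence (Remark~\ref{rem:ptw}), the evaluations $g\mapsto g(z_i)$ and $g\mapsto g(z_j)$ are $d_z$-continuous, so there is $\delta>0$ such that $d_z(f,g)<\delta$ forces $\rho(f(z_i),g(z_i))<\eta/3$ and $\rho(f(z_j),g(z_j))<\eta/3$. For such $g$ the triangle inequality yields $\rho(g(z_i),g(z_j))\le\rho(f(z_i),f(z_j))+2\eta/3<\rho(z_i,z_j)$, hence $(z_i,z_j)\in D(g)\cap U_k$ and $g\in\mathcal{E}_k$. Thus $B_{d_z}(f,\delta)\subset\mathcal{E}_k$, proving openness and completing the argument.
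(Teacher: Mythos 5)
Your proof is correct, and it takes a genuinely different route from the paper's. Your key move is to observe upfront that
\[
  D(f)=\{(x,y)\in X\times X\colon\rho(f(x),f(y))<\rho(x,y)\}
\]
is \emph{open} in $X\times X$ (as the preimage of $(-\infty,0)$ under a continuous map), so that residuality of $D(f)$ reduces to density. This lets you work with single witnessing pairs $(z_i,z_j)$ from the dense sequence, and the sets $\mathcal{E}_k=\{f\colon D(f)\cap U_k\neq\emptyset\}$ are then readily seen to be open (control at the two points $z_i,z_j$ suffices) and dense (any strict contraction lies in every $\mathcal{E}_k$). The paper instead defines $\mathcal{F}_{x,y}$ to demand strict decrease on a full product ball $B(x,r)\times B(y,r)$, and Lemma~\ref{lem:ptwShrink} shows $\mathcal{M}\setminus\mathcal{F}_{x,y}$ is nowhere dense by approximating $f$ with a strict contraction $g$ of constant $L<1$ and then exploiting the nonexpansiveness of nearby mappings $h$ to propagate control at two sequence points $z_{m_1},z_{m_2}$ to the whole product ball; the conclusion then exhibits the dense open set $\bigcup_{m\neq n}B(z_n,r_{n,m})\times B(z_m,r_{n,m})$ inside $D(f)$ directly, without invoking openness of $D(f)$. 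Both arguments rest on the same two ingredients --- density of strict contractions (Proposition~\ref{prop:StrContrDensePW}) and the fact that $d_z$-proximity controls values at the dense sequence (Remark~\ref{rem:ptw}) --- but your observation that $D(f)$ is open makes the bookkeeping lighter: you never need to prove that strict decrease persists over whole balls of arguments, only at isolated pairs. This is an effective simplification; the paper's more explicit construction would, however, transfer more directly to variants of the statement where one wants the witnessing set to be quantitatively large.
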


The proof of this theorem is based on the following lemma.

\begin{lemma}\label{lem:ptwShrink}
  For $x,y\in X$, $x\neq y$, we set
  \[
    \mathcal{F}_{x,y} := \left\{f\in\mathcal{M}\colon \exists r>0 \;\text{such that}\; \forall \xi\in B(x,r) \;\forall \eta\in B(y,r)\colon \rho(f(\xi),f(\eta))<\rho(\xi,\eta)\right\}.
  \]
  Then its complement $\mathcal{M}\setminus\mathcal{F}_{x,y}$ is a nowhere dense subset of $(\mathcal{M}, d_z)$.
\end{lemma}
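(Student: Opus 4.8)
The plan is to reduce the two-ball condition defining $\mathcal{F}_{x,y}$ to the single inequality $\rho(f(x),f(y))<\rho(x,y)$ and then read off the topological conclusion. So I would first prove that for $f\in\mathcal{M}$ one has $f\in\mathcal{F}_{x,y}$ if and only if $\rho(f(x),f(y))<\rho(x,y)$. The implication ``$\Rightarrow$'' is immediate: for any admissible $r>0$ the points $\xi=x$ and $\eta=y$ lie in $B(x,r)$ and $B(y,r)$, so the defining condition forces $\rho(f(x),f(y))<\rho(x,y)$. For ``$\Leftarrow$'', set $d:=\rho(x,y)-\rho(f(x),f(y))>0$ and $r:=d/4$; since $d\le\rho(x,y)$ we have $2r\le\rho(x,y)$, so the open balls $B(x,r)$ and $B(y,r)$ are disjoint (hence $\xi\ne\eta$ never spoils the strict inequality), and for $\xi\in B(x,r)$, $\eta\in B(y,r)$ the triangle inequality together with $\Lip f\le 1$ gives
\[
  \rho(f(\xi),f(\eta)) \le \rho(\xi,x)+\rho(f(x),f(y))+\rho(\eta,y) < \rho(x,y)-d+2r = \rho(x,y)-2r < \rho(\xi,\eta),
\]
where the final step uses $\rho(\xi,\eta)\ge\rho(x,y)-\rho(\xi,x)-\rho(\eta,y)>\rho(x,y)-2r$. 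Thus $f\in\mathcal{F}_{x,y}$ with this $r$, which proves the equivalence.

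Granting the equivalence, $\mathcal{M}\setminus\mathcal{F}_{x,y}=\{f\in\mathcal{M}\colon \rho(f(x),f(y))=\rho(x,y)\}$, because nonexpansiveness always yields $\rho(f(x),f(y))\le\rho(x,y)$. By Remark~\ref{rem:ptw} the metric $d_z$ generates the topology of pointwise convergence, so the evaluation map $f\mapsto\rho(f(x),f(y))$ is $d_z$-continuous; hence $\mathcal{F}_{x,y}$ is the preimage of the open half-line $(-\infty,\rho(x,y))$ and is open, and its complement is closed.

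It remains to show that $\mathcal{F}_{x,y}$ is dense in $(\mathcal{M},d_z)$, for then its closed complement has empty interior and the lemma follows. But every strict contraction $g$, say with constant $L<1$, satisfies $\rho(g(x),g(y))\le L\,\rho(x,y)<\rho(x,y)$ because $x\ne y$; by the equivalence just established, $g\in\mathcal{F}_{x,y}$. Thus $\mathcal{F}_{x,y}$ contains all strict contractions, which form a dense subset of $(\mathcal{M},d_z)$ by Proposition~\ref{prop:StrContrDensePW}, so $\mathcal{F}_{x,y}$ is itself dense.

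I do not expect a serious obstacle here: the whole argument rests on the elementary equivalence above, whose only delicate point is the bookkeeping in the ``$\Leftarrow$'' direction, namely picking $r$ small enough and noting that $B(x,r)$ and $B(y,r)$ are then disjoint. Everything afterwards is a one-line consequence of results already in the paper: that $d_z$ metrizes pointwise convergence and that strict contractions are $d_z$-dense.
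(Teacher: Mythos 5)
Your proof is correct and takes a genuinely different, cleaner route than the paper's. The paper proves nowhere density directly by the classical criterion: for each $f\in\mathcal{M}$ and $r>0$ it produces a strict contraction $g$ with $d_z(f,g)<r$, approximates $x,y$ by terms $z_{m_1},z_{m_2}$ of the dense sequence, and then carries out a chain of triangle-inequality estimates to find a $\delta>0$ with $B_{d_z}(g,\delta)\subset\mathcal{F}_{x,y}$. You instead extract the structural fact that, for nonexpansive $f$, membership in $\mathcal{F}_{x,y}$ is \emph{equivalent} to the single inequality $\rho(f(x),f(y))<\rho(x,y)$ -- a useful simplification the paper does not record -- so that $\mathcal{F}_{x,y}$ is the preimage of an open half-line under the $d_z$-continuous map $f\mapsto\rho(f(x),f(y))$, hence open, and it contains all strict contractions, hence is dense by Proposition~\ref{prop:StrContrDensePW}; an open dense set has nowhere dense complement. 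Your bookkeeping in the $\Leftarrow$ direction (choosing $r=d/4$ so the two balls are disjoint and both estimates land strictly on the correct side of $\rho(x,y)-2r$) is sound. What the paper's approach buys is that it never leaves the concrete realm of the $z_n$'s and $\alpha$-constants from Remark~\ref{rem:ptw}, which keeps it stylistically uniform with the other porosity proofs in the article; what your approach buys is conceptual clarity and brevity -- the openness and density of $\mathcal{F}_{x,y}$ become visible at a glance, and the equivalence you prove also streamlines the subsequent use of the radii $r_{m,n}$ in the proof of Theorem~\ref{thm:PtwShrink}, since it shows how to produce such a radius from the pointwise strict inequality alone.
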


\begin{proof}
  Let $\varepsilon>0$ and $f\in \mathcal{M}$ be given. By Proposition~\ref{prop:StrContrDensePW}, we may pick a strict contraction $g$ with $d_z(f,g)<\varepsilon$. We set $L:=\Lip g$ and
  \[
    r := \frac{(1-L)\rho(x,y)}{10}.
  \]
  Since $(z_n)_{n=1}^{\infty}$ is a dense sequence, we may pick $z_{m_1}$ and $z_{m_2}$ with $\rho(x,z_{m_1})<r$ and $\rho(y,z_{m_2})<r$. We pick an $\alpha>0$ such that  $\rho(h(z_{m_2}),g(z_{m_2})) < r$ and $\rho(h(z_{m_1}),g(z_{m_1}))<r$ whenever $d_z(g,h)<\alpha r$; see Remark~\ref{rem:ptw}. We set
  \[
    \delta := \alpha r,
  \]
  and let $h\in\mathcal{M}$ with $d_{z}(g,h)<\delta$. Given $\xi\in B(x,r)$ and $\eta\in B(y,r)$ we use the triangle inequality, the fact that $h$ is nonexpansive and the above bound on the distances of $g$ and $h$ at $z_{m_1}$ and $z_{m_2}$ to obtain that
  \begin{align*}
    \rho(h(\xi),h(\eta)) & \leq \rho(h(\xi),h(x)) + \rho(h(x),h(z_{m_1})) + \rho(h(z_{m_1}),g(z_{m_1})) + h(g(z_{m_1}),g(z_{m_2}))\\ & \qquad + \rho(g(z_{m_2}),h(z_{m_2}))  + \rho(h(z_{m_2}),h(y)) + \rho(h(y),h(\eta))\\
                         & \leq 3r + \rho(g(z_{m_1}),g(z_{m_2})) + 3r \leq 6r + L \rho(z_{m_1},z_{m_2}) \leq 6r + L(\rho(x,y) + 2r) \\
                         & < 8r + L \rho(x,y) = (1-L) \rho(x,y) - 2r + L \rho(x,y) = \rho(x,y) -2 r \leq \rho(\xi,\eta)
  \end{align*}
  which shows that $h\in \mathcal{F}_{x,y}$. In other words, we have shown that for every $\varepsilon>0$ and every $f\in \mathcal{M}$ there is a $g\in B_{d_z}(f,\varepsilon)$ and a $\delta>0$ such that $B_{d_z}(g,\delta)\subset \mathcal{F}_{x,y}$.
\end{proof}

\begin{proof}[Proof of Theorem~\ref{thm:PtwShrink}]
  We set
  \[
    \mathcal{F} := \bigcap_{m,n\in\mathbb{N}} \mathcal{F}_{z_{m},z_{n}}
  \]
  where $\mathcal{F}_{z_{m},z_{n}}$ is the set defined in Lemma~\ref{lem:ptwShrink}. Given $f\in \mathcal{F}$ we use Lemma~\ref{lem:ptwShrink} to obtain for every $(m,n)\in\mathbb{N}^2$ a radius $r_{m,n}>0$ such that $\rho(f(\xi),f(\eta)) < \rho(\xi,\eta)$ for all pairs $(\xi,\eta) \in B(z_m,r_{m,n})\times B(z_n,r_{m,n})$. Hence the set
  \[
    \{(x,y)\in X\times X\colon x\neq y\;\text{and}\;\rho(f(x),f(y)) < \rho(x,y)\}
  \]
  contains the dense open set
  \[
    \bigcup_{\substack{n,m\in\mathbb{N}\\m\neq n}} B(z_{n},r_{n,m}) \times B(z_{m}, r_{n,m})
  \]
  and it is a residual subset of~$X$.
\end{proof}

\medskip\medskip{}

\noindent\textbf{Acknowledgements.} All the authors are grateful to two anonymous referees for their useful comments and helpful suggestions. In particular, they greatly appreciate the suggestions regarding Theorems~\ref{thm:BallIntoBall} and~\ref{thm:modcont} and the very close reading of the article. The authors thank Michael Dymond for several discussions on this topic and Eva Kopeck\'{a} for pointing out Lemma~2 of~\cite{IvesPreiss}. The research of the first and the third author is supported by the Austrian Science Fund (FWF): P~32523-N. The second author was partially supported by the Israel Science Foundation (Grant 820/17), the Fund for the Promotion of Research at the Technion and by the Technion General Research Fund.


\begin{thebibliography}{10}

\bibitem{BD2016}
  Christian Bargetz and Michael Dymond.
  \newblock {$\sigma$}-porosity of the set of strict contractions in a space of
  non-expansive mappings.
  \newblock {\em Israel J. Math.}, 214(1):235--244, 2016.

\bibitem{BDMR2021}
  Christian Bargetz, Michael Dymond, Emir Medjic, and Simeon Reich.
  \newblock On the existence of fixed points for typical nonexpansive mappings on
  spaces with positive curvature.
  \newblock {\em Topol. Methods Nonlinear Anal.}, 57(2):621--634, 2021.

\bibitem{BDR2017}
  Christian Bargetz, Michael Dymond, and Simeon Reich.
  \newblock Porosity results for sets of strict contractions on geodesic metric
  spaces.
  \newblock {\em Topol. Methods Nonlinear Anal.}, 50(1):89--124, 2017.

\bibitem{BS}
  Yoav Benyamini and Yaki Sternfeld.
  \newblock Spheres in infinite-dimensional normed spaces are {L}ipschitz
  contractible.
  \newblock {\em Proc. Amer. Math. Soc.}, 88(3):439--445, 1983.

\bibitem{BH1999}
  Martin~R. Bridson and Andr\'{e} Haefliger.
  \newblock {\em Metric spaces of non-positive curvature}, volume 319 of {\em
    Grundlehren der Mathematischen Wissenschaften}.
  \newblock Springer-Verlag, Berlin, 1999.

\bibitem{Browder}
  Felix~E. Browder.
  \newblock Fixed-point theorems for noncompact mappings in {H}ilbert space.
  \newblock {\em Proc. Nat. Acad. Sci. U.S.A.}, 53:1272--1276, 1965.

\bibitem{BM1976}
  Francesco~S. De~Blasi and J\'{o}zef Myjak.
  \newblock Sur la convergence des approximations successives pour les
  contractions non lin\'{e}aires dans un espace de {B}anach.
  \newblock {\em C. R. Acad. Sci. Paris S\'{e}r. A-B}, 283(4):Aiii, A185--A187,
  1976.

\bibitem{BM1989}
  Francesco~S. De~Blasi and J\'{o}zef Myjak.
  \newblock Sur la porosit\'{e} de l'ensemble des contractions sans point fixe.
  \newblock {\em C. R. Acad. Sci. Paris S\'{e}r. I Math.}, 308(2):51--54, 1989.

\bibitem{DW1964:SimpleInequality}
  Charles F. Dunkl and Kenneth~S. Williams.
  \newblock Mathematical notes: a simple norm inequality.
  \newblock {\em Amer. Math. Monthly}, 71(1):53--54, 1964.

\bibitem{Dymond2021}
  Michael Dymond.
  \newblock Porosity phenomena of non-expansive, {B}anach space mappings.
  \newblock Preprint (arXiv: 2110.13722, to appear in Isr. J. Math.), 2021.

\bibitem{Engelking}
  Ryszard Engelking.
  \newblock {\em General topology.}
  \newblock
  PWN -- Polish Scientific Publishers, Warsaw, 1977.

\bibitem{GK1990}
  Kazimierz Goebel and William A. Kirk.
  \newblock {\em Topics in metric fixed point theory}, volume~28 of {\em
    Cambridge Studies in Advanced Mathematics}.
  \newblock Cambridge University Press, Cambridge, 1990.

\bibitem{GR1984}
  Kazimierz Goebel and Simeon Reich.
  \newblock {\em Uniform convexity, hyperbolic geometry, and nonexpansive
    mappings}, volume~83 of {\em Monographs and Textbooks in Pure and Applied
    Mathematics}.
  \newblock Marcel Dekker, Inc., New York, 1984.


\bibitem{dFK1967:RadialProjection}
  Djairo Guedes de~Figueiredo and Les Andrew Karlovitz.
  \newblock On the radial projection in normed spaces.
  \newblock {\em Bull. Amer. Math. Soc.}, 73:364--368, 1967.

\bibitem{IvesPreiss}
  Dean J. Ives and David Preiss.
  \newblock Not too well differentiable Lipschitz isomorphisms.
  \newblock {\em Isr. J. Math.} 115: 343--353, 2000.

\bibitem{MRZ2004}
  Eva Matou\v{s}kov\'{a}, Simeon Reich, and Alexander~J. Zaslavski.
  \newblock Genericity in nonexpansive mapping theory.
  \newblock In {\em Advanced courses of mathematical analysis {I}}, pages 81--98.
  World Sci. Publ., Hackensack, NJ, 2004.

\bibitem{Medjic}
  Emir Medjic.
  \newblock On successive approximations for set-valued mappings in {B}anach
  spaces.
  \newblock Preprint (arXiv:2203.03470), 2022.

\bibitem{Rakotch}
  Ephraim Rakotch.
  \newblock A note on contractive mappings.
  \newblock {\em Proc. Amer. Math. Soc.}, 13:459--465, 1962.

\bibitem{ReichESI}
  Simeon Reich.
  \newblock Genericity and porosity in nonlinear analysis and optimization.
  \newblock ESI Preprint 1756, Proceedings of CMS'05 (Computer Methods and
  Systems), Krak\'{o}w, 2005, 9--15, 2005.

\bibitem{RS1990}
  Simeon Reich and Itai Shafrir.
  \newblock Nonexpansive iterations in hyperbolic spaces.
  \newblock {\em Nonlinear Anal.}, 15(6):537--558, 1990.

\bibitem{RZ2000}
  Simeon Reich and Alexander~J. Zaslavski.
  \newblock Almost all nonexpansive mappings are contractive.
  \newblock {\em C. R. Math. Acad. Sci. Soc. R. Can.}, 22(3):118--124, 2000.

\bibitem{RZ2001}
  Simeon Reich and Alexander~J. Zaslavski.
  \newblock The set of noncontractive mappings is {$\sigma$}-porous in the space
  of all nonexpansive mappings.
  \newblock {\em C. R. Acad. Sci. Paris S\'{e}r. I Math.}, 333(6):539--544, 2001.

\bibitem{RZ2015}
  Simeon Reich and Alexander~J. Zaslavski.
  \newblock Generic contractivity of nonexpansive mappings with unbounded
  domains.
  \newblock {\em J. Nonlinear Convex Anal.}, 16(1):1--7, 2015.


\bibitem{RZ2016}
  Simeon Reich and Alexander~J. Zaslavski.
  \newblock Two porosity theorems for nonexpansive mappings in hyperbolic spaces.
  \newblock {\em J. Math. Anal. Appl.}, 433(2):1220--1229, 2016.

\bibitem{Rolewicz}
  Stefan Rolewicz.
  \newblock {\em Metric linear spaces.}
  \newblock PWN -- Polish Scientific Publishers, Warsaw 1984.

\bibitem{Strobin}
  Filip Strobin.
  \newblock Some porous and meager sets of continuous mappings.
  \newblock {\em J. Nonlinear Convex Anal.}, 13(2):351--361, 2012.

\bibitem{Weaver}
  Nik Weaver.
  \newblock {\em Lipschitz algebras}.
  \newblock
  World Scientific Publishing Co. Pte. Ltd., Hackensack, NJ, 2018

\bibitem{Zajicek2}
  Lud\v{e}k Zaj\'{\i}\v{c}ek.
  \newblock Sets of {$\sigma $}-porosity and sets of {$\sigma $}-porosity {$(q)$}-
  \newblock {\em \v{C}asopis P\v{e}st. Mat.}, 101(4):350--359, 1976

\bibitem{Zajicek}
  Lud\v{e}k Zaj\'{\i}\v{c}ek.
  \newblock On {$\sigma$}-porous sets in abstract spaces.
  \newblock {\em Abstr. Appl. Anal.}, (5):509--534, 2005.

\end{thebibliography}
\end{document}